\DeclareMathAlphabet{\mathbbold}{U}{bbold}{m}{n}	
\theoremstyle{plain}
\newtheorem{theorem}{Theorem}
\newtheorem*{theorem*}{Theorem}
\newtheorem{proposition}[theorem]{Proposition}
\newtheorem{corollary}[theorem]{Corollary}
\newtheorem{lemma}[theorem]{Lemma}
\newtheorem*{lemma*}{Lemma}
\theoremstyle{definition}
\newtheorem{definition}[theorem]{Definition}
\theoremstyle{remark}
\newtheorem{remark}[theorem]{Remark}
\setlist[itemize]{leftmargin=1cm}
\newcommand{\R}{\mathbb{R}}
\newcommand{\distr}{\mathcal{D}}
\newcommand{\eps}{\varepsilon}
\newcommand{\hor}{\mathcal{H}}
\newcommand{\Tor}{\mathrm{Tor}}
\newcommand{\vol}{\mathrm{vol}}
\newcommand{\Popp}{\nu}
\newcommand{\diff}{\mathrm{d}}
\let\oldtocsection=\tocsection 
\let\oldtocsubsection=\tocsubsection
\renewcommand{\tocsection}[2]{\hspace{0em}\oldtocsection{#1}{#2}}
\renewcommand{\tocsubsection}[2]{\hspace{1em}\oldtocsubsection{#1}{#2}}
\author[Davide Barilari]{Davide Barilari$^*$}
\address{$^*$ Dipartimento di Matematica ``Tullio Levi-Civita'', Universit\`a degli Studi di Padova}
\email{\href{mailto:davide.barilari@unipd.it}{davide.barilari@unipd.it}}
\author[Tania Bossio]{Tania Bossio$^*$}
\email{\href{mailto:tania.bossio@phd.unipd.it}{tania.bossio@phd.unipd.it}}
\title[Tubes and Steiner in 3D contact SR geometry]{Steiner and tube formulae in 3D contact sub-Riemannian geometry}
\subjclass[2010]{53C17, 49J15}
\date{\today}
\begin{document}

\begin{abstract}
We prove a Steiner formula for regular surfaces with no characteristic points in 3D contact sub-Riemannian manifolds endowed with an arbitrary smooth volume. The formula we obtain, which is equivalent to a half-tube formula, is of local nature. It can thus be applied to any surface in a region not containing characteristic points.  

We provide a geometrical interpretation of the coefficients appearing in the expansion, and compute them on some relevant examples in three-dimensional sub-Riemannian  model spaces.  These results generalize those obtained in \cite{balogh15} and \cite{ritore21} for the Heisenberg group. 

\end{abstract}

\maketitle

\setcounter{tocdepth}{1}
\tableofcontents

\section{Introduction}
\label{s:introduction}

The celebrated Steiner's formula states that the Lebesgue volume of the $\varepsilon$-neigh\-bor\-hood $\Omega_\eps$ of a bounded regular domain $\Omega\subset\R^n$ can be expressed as a polynomial of degree $n$ in $\eps$
\begin{equation}\label{eq:steiner}
\vol(\Omega_{\eps})=\vol\left(\Omega\right)+\sum_{k=1}^na_k(\Omega)\eps^k,
\end{equation} 
where the $a_{k}=a_k(\Omega)$ are coefficients known as Minkowski's quermassintegrals.

The Steiner formula is naturally related with the so-called tube formula, or, to be precise, the half-tube formula.
Given a compact hypersurface $S$ embedded in $\R^n$, one defines the tube of size $\eps$ around $S$ as the set of points at distance at most $\varepsilon$ from $S$. 
If the hypersurface $S$ bounds a regular domain $\Omega$, one might define the half-tube $T_\eps(\Omega)$ of size $\eps$ by considering only the ``external tube'', i.e., points of the tube that do not belong to the domain $\Omega$. 
With this convention, it is clear that $T_\eps(\Omega)=\Omega_{\varepsilon}\setminus \Omega$, and the half-tube formula is  
\begin{equation}
\vol(T_{\eps}(\Omega))=\sum_{k=1}^{n}a_{k}(\Omega)\varepsilon^{k}. 
\end{equation}

In the case of a compact surface $S=\partial \Omega$ bounding a regular domain $\Omega $ in $\R^3$ the coefficients $a_{1},a_{2},a_{3}$ are computed explicitly as follows
\begin{equation}\label{eq:steiner2}
\vol(T_{\eps})=\eps \int_{S}  dA- \eps^{2} \int_{S} H dA + \frac{\eps^{3}}{3} \int_{S} K dA,
\end{equation} 
where $H$ is the mean curvature of $S$, computed with respect to the orientation of the surface given by the outward pointing normal of $\Omega$, and $K$ is the Gaussian curvature of $S$, while $dA$ denotes the surface measure.

The original work of  Steiner proves  formula \eqref{eq:steiner} for convex sets in the Euclidean spaces of dimension $2$ and $3$ \cite{steiner}. 
In the same years, Weyl proved the tube formula for smooth submanifolds of the Euclidean space \cite{Weyl39}. 
We refer the  reader to the monograph of A.\ Gray \cite{graybook} for an exhaustive overview of this subject. 

\medskip

The goal of this paper is to investigate the validity of Steiner like (or half-tube like) formulas in sub-Riemannian geometry. More precisely, we consider the case of half-tubes built over surfaces in three-dimensional sub-Riemannian contact manifolds. We provide a geometrical interpretation of the coefficients appearing in the expansion, generalizing the results obtained in \cite{balogh15,ritore21} for surfaces which are embedded in the Heisenberg group. 

A three-dimensional contact sub-Riemannian manifold is a triple $(M,\distr,g)$ where $M$ is a three-dimensional manifold, $\distr$ a bracket generating two-dimensional distribution in the tangent bundle, locally obtained as the kernel of a one-dimensional smooth form $\omega$ such that $\omega \wedge \mathrm{d}\omega\neq 0 $, and $g$ a smooth metric defined on the distribution such that the two-dimensional induced volume form $\mathrm{vol}_g$ coincides with the contact two-dimensional form $\mathrm{d}\omega|_{\distr}$. 
The three-dimensional form $\omega\wedge \mathrm{d}\omega$ is then a canonical volume form associated with the contact structure and the volume measure $\nu$ is called the \emph{Popp's volume} (for more details see \cite{BR15}). 

In what follows we assume a smooth measure $\mu$ on the manifold $M$ is given, which in general can be written as $\mu=h \nu$ for a smooth positive function $h$. 
  
 \subsection{Surfaces in 3D contact manifolds} 
Let $M$ be a three-dimensional contact sub-Riemannian manifold and let $S$ be a surface with no characteristic points (namely $\distr_{x}\neq T_{x}S$ for every $x\in S$) and that bounds a closed regular domain $\Omega\subset M$. 
We study the measure $\mu$ of the sub-Rieman\-nian half-tube built over $S$, that is defined through the sub-Rieman\-nian distance.

Given $S$ a hypersurface with no characteristic points, the sub-Rieman\-nian normal $N$ to the surface $S$ is a vector field that generates the unique direction in the distribution orthogonal to $T_{x}S$ (which is well-defined thanks to the condition $\distr_{x}\neq T_{x}S$).
If the surface $S$ is locally defined as $S=f^{-1}(0)$ where $f:M\to\R$ is a smooth function with $\diff f\neq 0$, the vector field $N$ on $S$ is parallel to the horizontal gradient $\nabla_Hf$ and of unit norm with respect the metric $g$. 
Therefore, $N$ is unique up to a sign.
Recall that the horizontal gradient $\nabla_Hf$ is the unique vector field tangent to the distribution $\distr$ that satisfies the identity
\begin{equation}
\diff f(X)=g(\nabla_Hf,X)
\end{equation} 
for every vector field $X$ tangent to $\distr$. 
Given an orthonormal basis $X_1$, $X_2$ on $\distr$, it holds that $\nabla_Hf=\left(X_1f\right)X_1 + \left(X_2f\right)X_2$.  We also denote by $X_{0}$ the Reeb vector field associated with the contact form which, we recall, is transversal to $\distr$.

Let us denote with $dV_\mu$ the 3-dimensional smooth volume form on $M$ associated with $\mu$. Namely, $dV_\mu$ is such that $\mu(B)=\int_BdV_\mu$ for every $B\subset M$ measurable. 
Furthermore, we can define a a smooth measure on the surface $S$ associated with $\mu$, that is the \emph{induced sub-Rieman\-nian area} $\sigma_\mu$: 
for a measurable set $U\subset S$, we set
\begin{eqnarray}
\label{eq:dA}
\sigma_\mu (U)=\int_UdA_{\mu}, & \mbox{where} & dA_{\mu}:=\iota_{N}dV_\mu
\end{eqnarray}
is the 2-dimensional volume form on $S$ obtained by restricting to $N$ the volume $dV_\mu$. 

The 3-dimensional volume form $dV_\mu$ and the sub-Rieman\-nian area form induced on a hypersurface are related by the following sub-Rieman\-nian version of the coarea formula, which plays a crucial role in the proof of our result.
Given a measurable function $f:M\to\R$ and a smooth map $\Phi:M\to\R$ such that $\nabla_H\Phi\neq 0$ on $M$, it holds that
\begin{equation}
\int_{M}f\left\|\nabla_H\Phi\right\|dV_\mu=\int_{\mathbb{R}}\int_{\Phi^{-1}(t)}f_{\mid\Phi^{-1}(t)}dA^{t}_{\mu}dt,
\end{equation}
where $dA^{t}_{\mu}$ is the induced sub-Riemannian area form on the surface $\Phi^{-1}(t)$. 
This formula, which involves only sub-Riemannian quantities, can be obtained by its classical Riemannian counterpart (cf.\ Proposition~\ref{prop:coareaSR} and its proof).

Finally, the \emph{sub-Riemannian mean curvature of the surface $S$ associated with $\mu$} is the smooth function $\mathcal{H}_\mu$ defined as
\begin{equation}\label{eq:Hmu}
\mathcal{H}_\mu=-\mathrm{div}_{\mu}\left(\nabla_{H}\delta\right).
\end{equation} 
where $\delta$ denotes the sub-Riemannian signed distance function from the surface $S$, cf.\ Definition~\ref{def:dsigned} for more details.

We stress that \eqref{eq:Hmu} defines $\mathcal{H}_\mu$ not only on $S$ but in a neighborhood of the surface. 
In particular, the derivative $N\left(\mathcal{H}_\mu\right)$ has a meaning. 

When $\mu = \Popp$ the measure associated with the Popp's volume form $\omega\wedge \mathrm{d}\omega$, we omit the dependence on the measure in the notation. 
For $\mathcal{H}:=\hor_{\Popp}$ we obtain the following expression in terms of a local frame (cf. Lemma~\ref{lem: H})
\begin{equation} 
\mathcal{H}=-X_{1}X_{1}\delta-X_{2}X_{2}\delta-c_{12}^{2}\left(X_{1}\delta\right)+c_{12}^{1}\left(X_{2}\delta\right).
\end{equation}

The main result of this paper is the following  formula for the measure of the localized half-tubular neighborhood. 
Namely, for $U\subset S$ we consider the subset $U_\eps$ of all points $x\in M\setminus \Omega$ for which the curve realizing the distance of the point $x$ from the surface $S$ has an end point in $U$.

\begin{theorem}
\label{thm:sviluppo}
Let $\left(M,\distr,g\right)$ be a contact three-dimensional sub-Rieman\-nian manifold equipped with a smooth measure $\mu$. 
Let $S\subset M$ be an embedded smooth surface bounding a closed region $\Omega$ and let $U\subset S$ be an open and relatively compact set such that its closure $\overline{U}$ does not contain characteristic points. The volume of the localized half-tubular neighborhood $U_\eps$, is smooth with respect to $\eps$ and satisfies for $\eps\to 0$: 
\begin{equation}
\label{eq:expansion}
 \mu\left(U_\eps\right)=\sum^3_{k=1}a_k(U,\mu)\frac{\eps^k}{k!}+o(\eps^3)
\end{equation}
where the coefficients $a_k=a_k(U,\mu)$ have the following expressions:
\begin{eqnarray}
\label{eq:coefficienti}
a_1=\int_UdA_{\mu}, & a_2=-\displaystyle{ \int_U\hor_\mu dA_{\mu}},& a_3=\int_U\left(-N\left(\mathcal{H}_\mu\right)+\mathcal{H}_\mu^2\right)dA_{\mu}.
\end{eqnarray}
Here $dA_{\mu}$ is the sub-Riemannian area measure on $S$ induced by $\mu$, $\mathcal{H}_\mu$ is the mean sub-Riemannian curvature of $S$ with respect to $\mu$ and $N$ is the sub-Riemannian normal to the surface.
\end{theorem}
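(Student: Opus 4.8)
The plan is to foliate the half-tube by the parallel surfaces $S_t:=\delta^{-1}(t)$ and to reduce the statement to a Taylor expansion, in $t$, of their induced areas. Write $N:=\nabla_H\delta$ for the sub-Riemannian normal. On $\overline U$, which is free of characteristic points, and for $t$ small, the function $\delta$ is smooth and satisfies the eikonal identity $\|\nabla_H\delta\|\equiv 1$ (cf.\ Definition~\ref{def:dsigned}), so that $N$ is a unit horizontal field with $N(\delta)=\|\nabla_H\delta\|^2=1$. Denoting by $\phi_t$ the flow of $N$, the relation $N(\delta)=1$ gives $\delta(\phi_t(x))=\delta(x)+t$, whence $\phi_t$ maps $S=S_0$ diffeomorphically onto $S_t$ and transports the foot-point-in-$U$ condition, so that $U_\eps=\bigcup_{0<t\le\eps}\phi_t(U)$. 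Applying the sub-Riemannian coarea formula of Proposition~\ref{prop:coareaSR} with $\Phi=\delta$ (admissible since $\|\nabla_H\delta\|=1\neq 0$) and $f=\mathbf 1_{U_\eps}$, I would obtain
\[
\mu(U_\eps)=\int_{U_\eps}\|\nabla_H\delta\|\,dV_\mu=\int_0^\eps A_\mu(t)\,dt,\qquad A_\mu(t):=\int_{S_t\cap U_\eps}dA_\mu^t .
\]

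Next I would compute the derivatives of $A_\mu$ at $t=0$ by transporting everything back to $S$ along the flow. Regarding $dA_\mu=\iota_N dV_\mu$ as a $2$-form defined in a neighborhood of $S$, naturality of the pullback under the inclusions $S_t\hookrightarrow M$ gives $A_\mu(t)=\int_U(\phi_t^*dA_\mu)\big|_S$, and since $N$ generates $\phi_t$ one has $\tfrac{d}{dt}\phi_t^*=\phi_t^*\mathcal{L}_N$, so that
\[
A_\mu^{(k)}(0)=\int_U\big(\mathcal{L}_N^{\,k}\,dA_\mu\big)\big|_S,\qquad k=0,1,2 .
\]
Because $A_\mu$ is smooth in $t$, integrating its second-order Taylor expansion yields $\mu(U_\eps)=\sum_{k=1}^3 A_\mu^{(k-1)}(0)\,\eps^k/k!+o(\eps^3)$, which also gives smoothness in $\eps$; it then remains to identify $A_\mu^{(k-1)}(0)$ with $a_k$.

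The core is an algebraic Lie-derivative identity. By Cartan's formula, the relation $\iota_N\iota_N=0$, and $d\,dV_\mu=0$, one has $\mathcal{L}_N dA_\mu=\iota_N\,d(\iota_N dV_\mu)=\iota_N\mathcal{L}_N dV_\mu$; using $\mathcal{L}_N dV_\mu=\mathrm{div}_\mu(N)\,dV_\mu=-\mathcal{H}_\mu\,dV_\mu$ this collapses to the clean relation
\[
\mathcal{L}_N\,dA_\mu=-\mathcal{H}_\mu\,dA_\mu ,
\]
and iterating (with $\mathcal{L}_N(g\,\omega)=(Ng)\,\omega+g\,\mathcal{L}_N\omega$) gives $\mathcal{L}_N^2\,dA_\mu=\big(\mathcal{H}_\mu^2-N(\mathcal{H}_\mu)\big)\,dA_\mu$. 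Restricting to $S$ and integrating over $U$ then produces exactly $A_\mu(0)=\int_U dA_\mu=a_1$, $A_\mu'(0)=-\int_U\mathcal{H}_\mu\,dA_\mu=a_2$, and $A_\mu''(0)=\int_U\big(-N(\mathcal{H}_\mu)+\mathcal{H}_\mu^2\big)\,dA_\mu=a_3$, completing the proof.

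I expect the main obstacle to be not this (essentially formal) variational computation, but the geometric input underlying it: establishing that near the relatively compact, characteristic-point-free set $\overline U$ the signed distance $\delta$ is genuinely smooth, satisfies $\|\nabla_H\delta\|\equiv1$, and that the normal geodesic flow $\phi_t$ realizes minimizers for $0<t\le\eps$, i.e.\ that no cut or conjugate points occur for $\eps$ small. This is precisely where sub-Riemannian distance functions are delicate; it is what makes $A_\mu(t)$ smooth in $t$ (hence $\mu(U_\eps)$ smooth in $\eps$) and what legitimizes both the coarea reduction and the identity $U_\eps=\bigcup_{0<t\le\eps}\phi_t(U)$. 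I would therefore isolate this regularity statement and draw on the properties of $\delta$ developed around Definition~\ref{def:dsigned} to discharge it before carrying out the expansion above.
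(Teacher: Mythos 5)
Your argument is correct, and after the common skeleton (regularity of $\delta$ from Theorem~\ref{thm:smoothness}, the coarea reduction of Proposition~\ref{prop:coareaSR}, and a Taylor expansion of the area of the level sets $U^t$) it diverges from the paper in the key computational step. The paper expresses $\int_{U^t}dA_\mu^t$ as $\int_U\left|\det d_pG\right|\,h(t,p)/h(0,p)\,dA_\mu$ and expands the determinant by transporting the frame $X_S,Y_S$ into the Jacobi-type fields $V_1,V_2$ of \eqref{eq:Vi}, then differentiating with the Tanno connection (Proposition~\ref{prop:determinant}, Lemma~\ref{lem:b0} and the two propositions of Section~\ref{s:expansion}). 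You instead observe that the flow $\phi_t$ of $N=\nabla_H\delta$ is exactly $G(t,\cdot)$, write $A_\mu(t)=\int_U\phi_t^*(\iota_N dV_\mu)$, and use $\tfrac{d}{dt}\phi_t^*=\phi_t^*\mathcal L_N$ together with the Cartan-calculus identity $\mathcal L_N\iota_N dV_\mu=\iota_N\mathcal L_N dV_\mu=\mathrm{div}_\mu(N)\,dA_\mu=-\mathcal H_\mu\,dA_\mu$; iterating gives $a_k=\int_U\mathcal L_N^{k-1}dA_\mu=\int_U\mathrm{div}_\mu^{k-1}(\nabla_H\delta)\,dA_\mu$ for all $k$, which is precisely the structural identity \eqref{eq:akdiv} that the paper only records afterwards as a remark. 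Your route is shorter, avoids the Tanno connection entirely, and yields the all-orders divergence formula for free; what it does not produce is the intermediate curvature--torsion expression \eqref{eq:lunga} for $\partial_t^2C(0,p)$, which is the ingredient the paper actually needs to prove the operative formula of Proposition~\ref{prop:a3f} (computing $a_3$ from a defining function $f$ without knowing $\delta$). So for Theorem~\ref{thm:sviluppo} alone your proof is complete modulo the regularity input you correctly isolate (which is supplied by Theorem~\ref{thm:smoothness} and Proposition~\ref{p:GI}, not by Definition~\ref{def:dsigned} itself), but the paper's heavier computation is not wasted effort in the larger scheme.
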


\begin{remark}
Since our result is local, the requirement on $S$ to be the boundary of a regular closed region $\Omega$ can be relaxed. 
In fact, one can consider any smooth surface $S$ and it is always possible to build a local half-tubular neighborhood around a subset $U\subset S$ open, oriented and relatively compact such that $\overline{U}$ does not contain any characteristic point. This can be done in a similar way as in \cite{ritore21}.
\end{remark}

\begin{remark}
In contrast to the Euclidean case presented in \eqref{eq:steiner}, the formula \eqref{eq:expansion} in Theorem~\ref{thm:sviluppo} is not a polynomial in $\eps$. 
In the Euclidean case the polynomial expansion is related to a specific choice of the volume, the Lebesgue one. 
In the sub-Riemannian case, even for the choice of the natural volume this is not true.
Indeed, as proved in \cite{balogh15}, in the first Heisenberg group $\mathbb{H}$ equipped with the Popp's measure $\Popp$ the volume of $U_\varepsilon$ is analytic in $\eps$. 
Considering for instance as $U$ an annulus around the origin in $S$ the horizontal plane, one can compute all terms of the Taylor series and check that this is not a polynomial, see \cite[Example~5.1]{balogh15}.
\end{remark}

\begin{remark}
The sub-Riemannian mean curvature $\mathcal{H}$ can be equivalently defined as the limit of the Riemannian mean curvatures $H^{\eps}$ of $S$ with respect to the Riemannian extension $g^{\eps}$ converging to the sub-Riemannian metric imposing that the Reeb vector field is orthogonal to $\distr$ and has norm $1/\eps$. 
The same conclusion holds also for $\mathcal{H}_\mu$ when the smooth measure $\mu$ is associated to a Riemannian extension which is not necessarily defined through the Reeb vector field.
This result is proved in \cite{BTV17,BTV20} for surfaces in the Heisenberg group equipped with the Popp's measure. See Section~\ref{s:mean} for the analogue statement in the general case.
\end{remark}

To compute the coefficients of expansion~\eqref{eq:expansion} one a priori  needs the knowledge of the explicit expression of the sub-Riemannian distance, which in general is not possible. We provide a formula which permits to compute those coefficients only in terms of a function $f$ locally defining $S$. We state the result in the case $\mu=\Popp$ hence we omit the dependence on the measure in the notation.
\begin{proposition} 
\label{prop:a3f}
Under the assumptions of Theorem~\ref{thm:sviluppo}, let us suppose that it is assigned the Popp's measure $\Popp$ and that $S$ is locally defined as the level set of a smooth function $f:M\to\R$ such that $\nabla_Hf|_U \neq 0$ and $\langle\nabla_Hf,\nabla_H\delta\rangle|_U>0$. 
Then, we have the following formulas for $a_2$ and $a_3$ in expansion~\eqref{eq:expansion}:
 \begin{equation}
 a_2=-\int_U\mathrm{div}_{}\left(\frac{\nabla_Hf}{\|\nabla_Hf\|} \right)dA_{};
 \end{equation}
\begin{equation}
a_3=\int_U \left[2 X_{S}\left(\dfrac{X_0f}{\left\|\nabla_Hf\right\|} \right)-\left(\dfrac{X_0f}{\left\|\nabla_Hf\right\|}\right)^{2}-\kappa -\left\langle\Tor\left(X_0,X_S\right),\dfrac{\nabla_Hf}{\left\|\nabla_Hf\right\|}\right\rangle\right]dA_{}
\end{equation}
with
$\kappa=\left\langle R(X_1,X_2)X_2,X_1\right\rangle$
where $R$ and $\Tor$ are the curvature and the torsion operators associated with the Tanno connection and the operative expression for the characteristic vector field is 
\begin{equation}
X_S=\frac{X_2f}{\left\|\nabla_Hf\right\|}X_1-\frac{X_1f}{\left\|\nabla_Hf\right\|}X_2.
\end{equation} 
\end{proposition}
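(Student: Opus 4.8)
The plan is to rewrite the quantities $\mathcal H$ and $N(\mathcal H)$ appearing in the coefficients $a_2,a_3$ of Theorem~\ref{thm:sviluppo}, which are built from the signed distance $\delta$, purely in terms of the defining function $f$. The basic comparison is a \emph{first–order} agreement between $\delta$ and $u:=f/\left\|\nabla_H f\right\|$ on $S$. Both vanish on $S$, so $\diff\delta=\diff u=0$ on $TS$; moreover, at points of $\overline U$ (no characteristic points) the orientation hypothesis $\langle\nabla_H f,\nabla_H\delta\rangle|_U>0$ together with the eikonal identity $\left\|\nabla_H\delta\right\|=1$ forces $\nabla_H\delta=\nabla_H f/\left\|\nabla_H f\right\|=\nabla_H u$ on $S$, i.e.\ $\diff\delta=\diff u$ on the distribution $\distr$. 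Since $\distr+TS=TM$ away from characteristic points, the two differentials coincide on all of $TM$ along $S$; in particular
\begin{equation}
X_1\delta=\frac{X_1 f}{\left\|\nabla_H f\right\|},\qquad X_2\delta=\frac{X_2 f}{\left\|\nabla_H f\right\|},\qquad X_0\delta=\frac{X_0 f}{\left\|\nabla_H f\right\|}=:w\quad\text{on }S .
\end{equation}
The last identity already produces the vertical quantity entering $a_3$.

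For $a_2$ I would show that $\nabla_H\delta$ can be replaced by $\nabla_H f/\left\|\nabla_H f\right\|$ \emph{inside} the divergence along $S$, even though this needs one derivative. The point is that the Popp divergence of a horizontal field $aX_1+bX_2$ reads $X_1 a+X_2 b+c_{12}^2 a-c_{12}^1 b$ and thus involves only \emph{horizontal} derivatives. Writing $X_1=n_1 N+n_2 X_S$ and $X_2=n_2 N-n_1 X_S$ with $n_i=X_i f/\left\|\nabla_H f\right\|$, the tangential $X_S$–derivatives of the components of $\nabla_H\delta-\nabla_H u$ vanish on $S$ (that field vanishes on $S$ and $X_S$ is tangent to $S$), while the normal $N$–derivatives collect into $N(\phi-1)$, where $\phi:=\langle\nabla_H u,\nabla_H\delta\rangle\le1$ by Cauchy–Schwarz, with equality exactly on $S$. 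Hence $\diff\phi|_S=0$ and $N(\phi-1)|_S=0$, so $\mathrm{div}(\nabla_H\delta)=\mathrm{div}(\nabla_H f/\left\|\nabla_H f\right\|)$ on $S$; recalling $\mathcal H=-\mathrm{div}(\nabla_H\delta)$ and $a_2=-\int_U\mathcal H\,dA$, and fixing the sign by the orientation convention of Definition~\ref{def:dsigned}, this gives the stated formula for $a_2$.

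The formula for $a_3$ is the substantial step because $a_3$ contains $N(\mathcal H)$, which is a \emph{third}–order quantity in $\delta$; first–order agreement of $\delta$ with $u$ is therefore not enough, and one must exploit that $\delta$ solves the eikonal equation $\left\|\nabla_H\delta\right\|=1$ in a full neighborhood of $S$. The plan is to derive a sub–Riemannian Riccati (transport) equation for $\mathcal H$ in the normal direction. Concretely I would differentiate the frame expression $\mathcal H=-X_1X_1\delta-X_2X_2\delta-c_{12}^2(X_1\delta)+c_{12}^1(X_2\delta)$ along $N=\nabla_H\delta$, remove the second derivatives of $\delta$ by differentiating $(X_1\delta)^2+(X_2\delta)^2=1$ in the directions $X_1,X_2,X_0$, and commute derivatives through the brackets of the adapted frame — this is where the Reeb field $X_0$, hence the vertical function $w=X_0\delta$, necessarily enters. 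Repackaging the purely horizontal commutators yields the horizontal curvature term $\kappa=\langle R(X_1,X_2)X_2,X_1\rangle$ of the Tanno connection, while the terms generated by the $[X_0,\,\cdot\,]$ brackets give the torsion contribution $\langle\Tor(X_0,X_S),N\rangle$, and the vertical derivatives assemble into $2X_S(w)-w^2$. The expected outcome is the identity
\begin{equation}
N(\mathcal H)=\mathcal H^2-2X_S(w)+w^2+\kappa+\langle\Tor(X_0,X_S),N\rangle\qquad\text{on }S,
\end{equation}
in which the quadratic second–fundamental–form terms cancel against $\mathcal H^2$, so that $-N(\mathcal H)+\mathcal H^2=2X_S(w)-w^2-\kappa-\langle\Tor(X_0,X_S),N\rangle$. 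Substituting the on–$S$ identities $N=\nabla_H f/\left\|\nabla_H f\right\|$, $X_S=(X_2 f)X_1/\left\|\nabla_H f\right\|-(X_1 f)X_2/\left\|\nabla_H f\right\|$ and $w=X_0 f/\left\|\nabla_H f\right\|$ then yields the asserted expression for $a_3$.

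I expect the Riccati computation to be the main obstacle. Tracking $N(\mathcal H)$ through the non–commuting frame demands careful bookkeeping, and the genuinely sub–Riemannian features — the mixing of the Reeb direction and the appearance of the torsion of the Tanno connection, which has no Riemannian counterpart — must be correctly isolated from the many bracket terms and recognized as $\kappa$ and $\langle\Tor(X_0,X_S),N\rangle$. A secondary point is to justify that the $N$–derivative of $\mathcal H$ makes sense, i.e.\ the smoothness of $\delta$, and hence of $\mathcal H$, in a neighborhood of $\overline U$, which is guaranteed by the absence of characteristic points.
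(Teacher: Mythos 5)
Your overall architecture is the same as the paper's: first establish that $\delta$ and $f/\left\|\nabla_Hf\right\|$ agree to first order along $S$ (the paper writes $f=e^h\delta$ and uses $\left\|\nabla_H\delta\right\|=1$ to get $X_i\delta=X_if/\left\|\nabla_Hf\right\|$ on $U$ for $i=0,1,2$), and then reduce $a_3$ to an identity expressing $-N(\mathcal H)+\mathcal H^2$ through purely tangential first--order data, $\kappa$, and the Tanno torsion. Your target identity, equivalently
\begin{equation}
-N(\mathcal H)+\mathcal H^2=2X_S(X_0\delta)-(X_0\delta)^2-\kappa-\left\langle\Tor(X_0,X_S),N\right\rangle \qquad\text{on } U,
\end{equation}
is exactly what the paper obtains by combining \eqref{eq:lunga} with \eqref{eq:CCorta} (both proved in Section~\ref{s:expansion} via the Jacobi--field computation of $\partial_t^2C(0,p)$ and Lemma~\ref{lem:b0}) and the bracket computation $-\langle\nabla_{X_0}N,JN\rangle=X_S(X_0\delta)-\langle\Tor(X_0,N),X_S\rangle$; note that $\langle\Tor(X_0,X_S),N\rangle=\langle\Tor(X_0,N),X_S\rangle$ since $\tau$ is symmetric, so the two forms agree. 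The gap is that this identity --- which is the entire content of the hard step --- is only announced as ``the expected outcome'': you describe a Riccati-type differentiation of the coordinate formula for $\mathcal H$ along $N$ but do not carry it out, so the proof is a plan rather than an argument. (In the paper the identity does not need to be re-derived in the proof of Proposition~\ref{prop:a3f} precisely because it is a byproduct of the expansion computation; in a self-contained proof you must actually do the computation, e.g.\ via Lemma~\ref{lem:b0} and the curvature term $\langle R(JN,N)N,JN\rangle=\kappa$.)

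A secondary, fixable error is in the $a_2$ step. You compare $\nabla_H\delta$ with $\nabla_Hu$ where $u=f/\left\|\nabla_Hf\right\|$, and invoke Cauchy--Schwarz to claim $\phi=\langle\nabla_Hu,\nabla_H\delta\rangle\le1$ with equality on $S$, hence $N(\phi-1)|_S=0$. But $\left\|\nabla_Hu\right\|\neq1$ off $S$ (only $\nabla_Hf/\left\|\nabla_Hf\right\|$ is genuinely unit), so Cauchy--Schwarz gives $\phi\le\left\|\nabla_Hu\right\|$, not $\phi\le 1$, and in fact $N\phi|_S=-N(\left\|\nabla_Hf\right\|)/\left\|\nabla_Hf\right\|$ is generically nonzero; correspondingly $\mathrm{div}(\nabla_Hu)$ and $\mathrm{div}(\nabla_Hf/\left\|\nabla_Hf\right\|)$ differ on $S$ in general. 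The argument does work if you compare $\nabla_H\delta$ directly with the unit field $W=\nabla_Hf/\left\|\nabla_Hf\right\|$: then $\langle W,\nabla_H\delta\rangle\le1$ everywhere with equality on $S$, so every point of $S$ is a local maximum and the normal derivative vanishes, giving $\mathrm{div}(\nabla_H\delta)|_S=\mathrm{div}(W)|_S$, which is the content of \eqref{eq:HU} that the paper simply cites from Remark~\ref{rem:H}.
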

We stress that it is not a priori clear that it is possible to write such an expression since, looking for example at formula \eqref{eq:coefficienti}, the coefficient $a_3$ depends on a derivative of $\mathcal{H}$ along $N$, which is a vector field transversal to the surface. Hence it might depend on the value of $\mathcal{H}$ outside $S$, as it happens for higher order coefficients (cf.\ Remark~\ref{r:commenta4} for a comment in the Heisenberg group).

\begin{remark} We notice that the second order coefficient $a_2$ appearing in \eqref{eq:steiner2} and the corresponding one $a_2$ of expansion~\eqref{eq:expansion} are both integral of a mean curvature\footnote{The extra factor $2$ in the sub-Riemannian formula is due to the fact that in the Euclidean case one defines the mean curvature as one half of the sum of the two principal curvatures.}.
One may then wonder if the same analogy holds for the third order coefficient $a_3$. 
Namely, if the coefficient $a_3$ of expansion~\eqref{eq:expansion} is the integral of a suitably defined sub-Riemannian Gaussian curvature of the surface
appearing in \eqref{eq:steiner2}. 

In \cite{BTV17,BTV20} the authors define the sub-Riemannian Gaussian curvature $\mathcal{K}_S$ of a surface $S$ in the Heisenberg group $\mathbb{H}$ through Riemannian approximations.  
The expression of the limit, written in our notation is 
\begin{equation}
\label{eq:gaussiancurv}
\mathcal{K}_S=X_S(X_0\delta)-(X_0\delta)^2,
\end{equation}
It can be checked that the integral of this quantity does not correspond to the third coefficient $a_3$, that  thanks to Proposition~\ref{prop:a3f} rewrites as follows
\begin{equation}
\label{eq:a3ritore}
a_3=\int_U2X_S\left(X_0\delta\right)-\left(X_0\delta\right)^2\,dA.
\end{equation}
being $\kappa=0$ and $\Tor$ the null operator in the Heisenberg group.
Our result agrees with the expression obtained by Ritor\' e in \cite{ritore21}.
\end{remark}

We exploit Proposition~\ref{prop:a3f} to provide some examples. 
One can specialize the result in the case of surfaces embedded in some particular three-dimensional contact manifolds, considering the metric invariants $\chi$ and $\kappa$ defining the local geometry of the sub-Riemannian structure. 
This coefficients correspond to the torsion and the horizontal scalar curvature of a natural affine connection and appear in the coefficients.
For a three-dimensional contact manifold with $\chi=0$ and equipped with the Popp's measure $\Popp$, we have that
\begin{equation}
a_3=\int_U 2X_{S}(X_0\delta)-\left(X_0\delta\right)^{2}-\kappa\, dA_{}.
\end{equation} 
Moreover, chosen in terms of a canonical basis $X_1,X_2$ for a left invariant sub-Riemannian contact structure on a Lie group with $\chi\neq0$ (see for instance see \cite[Prop. 17.14]{thebook}) and again equipped with the Popp's measure $\Popp$, it holds that
\begin{equation}\label{eq:a33}
a_3=\int_U 2X_{S}(X_0\delta)-(X_0\delta)^{2}-\kappa+\chi\left(\left(X_{1}\delta\right)^{2}-\left(X_{2}\delta\right)^{2}\right)\,  dA_{}.
\end{equation}
We stress that in the Heisenberg group $\kappa=\chi=0$, recovering formula \eqref{eq:a3ritore}, while in general on contact manifolds the curvature and torsion operators associated with the Tanno connection are not necessarily vanishing.

Finally, we compute expansion~\eqref{eq:expansion} in the case of a surface embedded in $\mathbb{H}$ with rotational symmetry with respect to the $z$-axis. 
In addition, we show that for ``model'' surfaces in the model spaces $SU(2)$ and $SL(2)$ (cf.\ \cite{BBCH21}, see also \cite{BH22}), it holds $a_2=a_3=0$ as for the horizontal plane in $\mathbb{H}$.

%


\subsection{Structure of the paper}
In Section~\ref{s:preliminaries} we recall some preliminary notions about three-dimensional sub-Rieman\-nian contact manifolds, the Tanno connection and the sub-Riemannian distance. 
In Section~\ref{s:coarea} we define the local half-tubular neighborhood $U_\eps$ and discuss about the sub-Riemannian mean curvature.
Furthermore, we prove a sub-Riemannian version of the coarea formula
 in order to derive a formula for the measure of $U_\eps$.
Section~\ref{s:localized} is devoted to the study of the Jacobian of the exponential map at a fixed time with the goal to derive a handy expression for the computations. 
The proof of Theorem~\ref{thm:sviluppo} is contained in Section~\ref{s:expansion}. 
In Section~\ref{s:operative}, we present an operative way to compute the coefficients in expansion~\eqref{eq:expansion} without knowing explicitly the distance function. Moreover, we specialize our result in the case of three-dimensional contact manifolds with particular values of the geometric invariants $\chi$ and $\kappa$.
In section~\ref{Applications} we show some applications of the results to rotational surfaces in the Heisenberg group and model surfaces in the spaces $SU(2)$ and $SL(2)$.\\

{\bf Acknowledgments.} 
Davide Barilari acknowledges the support granted by the European Union – NextGenerationEU  Project ``NewSRG - New directions in Sub-Riemannian Geometry'' within the Program STARS@UNIPD 2021.
The authors are 
are partially supported by the INdAM--GNAMPA 2022 Project ``Analisi Geometrica in Strutture Sub-Riemanniane'' CUP\_E55F22000270001.


\section{Preliminaries}
\label{s:preliminaries}

In this section we recall some definitions and properties regarding  three-di\-me\-nsion\-al contact sub-Riemannian manifolds, the canonical Tanno connection and the sub-Riemannian distance. For a more detailed and general presentation see \cite{thebook}.

\subsection{Three-dimensional contact sub-Riemannian manifolds}
\begin{definition}
A \emph{three-dimensional contact sub-Riemannian manifold} $(M,\distr,g)$ is a smooth manifold $M$ of dimension three equipped with a two dimensional vector distribution $\distr$ that is the kernel of a one-dimensional form $\omega$ such that $\omega\wedge \mathrm{d}\omega \neq 0$. We endow the distribution $\distr$ with a smooth metric $g$.
\end{definition}
Up to multiplication by a never vanishing function, one can always normalize the contact form with respect to $g$ by requiring that $\mathrm{d}\omega|_{\distr}$ coincides with the volume form defined by the metric $g$ on $\distr$, that is $\mathrm{d}\omega(X_{1},X_{2})=1$ for every positively oriented orthonormal frame of $\distr$. In the sequel we always assume this normalization.

The conditions $\ker \omega=\distr$ and $\omega\wedge \mathrm{d}\omega \neq 0$ are equivalent to saying that for any pair of locally defined and linearly independent vector fields $X$ and $Y$ in $\distr$, the Lie bracket $[X,Y]$ is not contained in $\distr$. In other words $\distr$ is Lie bracket generating.
 
 Any smooth vector field tangent to $\distr$ (resp.\ any absolutely continuous curve whose velocity lies a.e.\ in $\distr$) is called {\it horizontal vector field} (resp.\  {\it horizontal curve}). 
 
 The Lie bracket generating condition implies that there exists at least one horizontal curve connecting any two points in $M$ (Chow-Rashevkij theorem). 

The \emph{Reeb vector field} $X_{0}$ is the unique vector field such that
\begin{equation}\label{eq:reeb}
i_{X_{0}}\omega=1,\qquad i_{X_{0}}\mathrm{d}\omega=0,
\end{equation}
where $i_{X_{0}}$ denotes the interior product. 
For every positively oriented local orthonormal frame $X_1, X_2$ of $\distr$ it holds that ${X_1, X_2,X_0}$ is a local frame for $TM$ satisfying the identity $[X_2,X_1]=X_{0}\mod\distr$.

Finally we can introduce on $M$ a Riemannian metric such that ${X_1, X_2,X_0}$ is a local orthohormal frame for $TM$, i.e., extending the metric $g$ defined on $\distr$ by declaring that $X_{0}$ has unit norm and is orthogonal to $\distr$. 
The Riemannian metric will be denoted with $\left\langle\cdot,\cdot\right\rangle$ and the corresponding induced norm with $\| \cdot \|$.
\begin{definition}
Given a smooth function $f:M\to \R$, we denote with $\nabla_{H}f$ its horizontal gradient, which is the unique horizontal vector field satisfying the identity $\mathrm{d}f(v)=g(\nabla_{H}f,v)$ for every $v\in \distr$.
\end{definition}
Denoting by $\nabla f$ the Riemannian gradient of $f$ with respect to the ambient Riemannian metric, in terms of a local orthonormal frame ${X_1, X_2,X_0}$ chosen as above, the expressions
for the gradients are the following:
\begin{gather}
\label{eq:gradient}
\nabla f=(X_{1}f)X_{1}+(X_{2}f)X_{2}+(X_{0}f)X_{0}, \\  \nabla_{H}f=(X_{1}f)X_{1}+(X_{2}f)X_{2}.
\end{gather}

From the properties defining the Reeb vector field \eqref{eq:reeb} and the normalization of the contact form we get that for every positively oriented orthonormal frame of $\distr$ we have the Lie brackets can be written as follows
\begin{align}
\left[X_{2},X_{1}\right] & =c_{12}^{1}X_{1}+c_{12}^{2}X_{2}+X_{0},\nonumber \\
\left[X_{1},X_{0}\right] & =c_{01}^{1}X_{1}+c_{01}^{2}X_{2},\label{eq:structurecoeff}\\
\left[X_{2},X_{0}\right] & =c_{02}^{1}X_{1}+c_{02}^{2}X_{2}.\nonumber 
\end{align}
for suitable smooth functions $c_{ij}^{k}$ defined on $M$. We notice that these functions are constant if the sub-Riemannian structure is left-invariant on a Lie group.
\begin{definition}
\label{def:J}
Let $J:TM\to TM$ be the linear morphism of vector bundles defined by the identities
\begin{eqnarray}
J(X_1)=X_2, & J(X_2)=-X_1, & J(X_0)=0.
\end{eqnarray}
 Notice that $g(X,JY)=-\mathrm{d}\omega(X,Y)$, for every $X,Y$ in $TM$, hence $g(X,JX)=0$, and that $J|_{\distr}:\distr\to \distr$ preserves the sub-Riemannian metric.
\end{definition}

The following lemma follows from a direct computation.
\begin{lemma}
\label{lem:rotation}Let $Y_1,Y_2=JY_1$ and $X_1, X_2=JX_1$ be two pairs of horizontal
vector fields satisfying
\begin{equation}
\left(\begin{array}{c}
Y_{1}\\
Y_{2}
\end{array}\right)=\left(\begin{array}{cc}
\text{\ensuremath{\cos\theta}} & -\sin\theta\\
\sin\theta & \cos\theta
\end{array}\right)\left(\begin{array}{c}
X_{1}\\
X_{2}
\end{array}\right)
\end{equation}
for some smooth function $\theta:M\to \R$. Then we have
$$\left[Y_{2},Y_{1}\right]=\left[X_{2},X_{1}\right]-X_{1}\left(\theta\right)X_{1}-X_{2}\left(\theta\right)X_{2}.$$
\end{lemma}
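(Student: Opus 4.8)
The plan is to prove the identity by a direct computation, expanding $[Y_2,Y_1]$ via bilinearity and the derivation property of the Lie bracket with respect to function coefficients. Writing $c=\cos\theta$ and $s=\sin\theta$, the hypothesis gives $Y_1=cX_1-sX_2$ and $Y_2=sX_1+cX_2$. First I would recall the Leibniz rule $[fX,gY]=fg[X,Y]+f(Xg)Y-g(Yf)X$, valid for smooth functions $f,g$ and vector fields $X,Y$, together with the elementary identities $X_i(c)=-s\,(X_i\theta)$ and $X_i(s)=c\,(X_i\theta)$ for $i=1,2$, which encode the chain rule for $\cos$ and $\sin$.

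Next I would expand $[Y_2,Y_1]=[sX_1+cX_2,\,cX_1-sX_2]$ into the four brackets $[sX_1,cX_1]$, $[sX_1,-sX_2]$, $[cX_2,cX_1]$ and $[cX_2,-sX_2]$. The two \emph{diagonal} terms $[sX_1,cX_1]$ and $[cX_2,-sX_2]$ involve $[X_1,X_1]=[X_2,X_2]=0$, so only the first-order terms of the Leibniz rule survive; substituting the derivatives of $c$ and $s$ and using $s^2+c^2=1$, these collapse respectively to $-(X_1\theta)X_1$ and $-(X_2\theta)X_2$, producing exactly the correction term in the statement.

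The two \emph{off-diagonal} terms $[sX_1,-sX_2]$ and $[cX_2,cX_1]$ each carry a multiple of $[X_1,X_2]=-[X_2,X_1]$ together with first-order contributions. Collecting the leading part yields $(s^2+c^2)[X_2,X_1]=[X_2,X_1]$, while the first-order contributions are of the form $\pm sc\,(X_i\theta)X_j$ and cancel in pairs. Summing the diagonal and off-diagonal contributions gives $[Y_2,Y_1]=[X_2,X_1]-(X_1\theta)X_1-(X_2\theta)X_2$, as claimed.

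The computation is entirely elementary, so there is no genuine conceptual obstacle; the only real risk is bookkeeping—keeping track of the signs arising from the antisymmetry of the bracket, from the chosen orientation $Y_2=JY_1$ (hence the order $[Y_2,Y_1]$ rather than $[Y_1,Y_2]$), and from the derivatives of $\cos$ and $\sin$. I would organize the calculation so that the four brackets are displayed in a single aligned block and the cancellations via $s^2+c^2=1$ are made explicit, which keeps the sign bookkeeping transparent.
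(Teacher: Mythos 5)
Your computation is correct and is exactly the ``direct computation'' the paper alludes to (it gives no further detail): the diagonal brackets produce $-(X_1\theta)X_1-(X_2\theta)X_2$ via $s^2+c^2=1$, the off-diagonal ones reassemble $[X_2,X_1]$ with the cross terms $\pm sc\,(X_i\theta)X_j$ cancelling, and all signs check out. Nothing to add.
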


\subsection{Tanno connection} 
Let us now introduce the Tanno connection, which is a canonical connection on contact manifold, see for instance  \cite{tanno, abrcontact}. 
\begin{definition}
\label{def:Tanno}
The Tanno connection $\nabla$ is the unique linear  connection on $TM$ satisfying
\begin{itemize}
\item[(i)] $\nabla g=0$, $\nabla X_0=0$;
\item[(ii)] $\Tor(X,Y)=-\left\langle X,JY\right\rangle X_0=\mathrm{d}\omega(X,Y)X_0$ for all $X,Y\in\distr$;
\item[(iii)] $\Tor(X_0,JX)=-J\Tor(X_0,X)$ for any vector field $X$ on $M$. 
\end{itemize}
where $\Tor$ denotes the Torsion of the connection $\nabla$.
\end{definition}
\begin{remark}
The Tanno connection $\nabla$ commutes with the operator $J$, i.e., $$\nabla_XJY=J\nabla_XY.$$ 
Indeed, $J\nabla_XY$ is horizontal by definition of $J$, while $\nabla_XJY$ is horizontal since
\begin{equation}
0=X\langle JY,X_0\rangle=\langle\nabla_XJY,X_0\rangle+\langle JY,\nabla_XX_0\rangle=\langle\nabla_XJY,X_0\rangle.
\end{equation}
Moreover, from the fact that $J$ preserves the metric, we deduce that
\begin{equation}
\langle \nabla_XJY,JY\rangle=\frac{1}{2}X\langle JY,JY\rangle=\frac{1}{2}X\langle Y,Y\rangle=\langle \nabla_XY,Y \rangle=\langle J\nabla_XY,JY\rangle.
\end{equation}
Finally, since $\langle Y,JY\rangle=0$ one obtains that
\begin{equation}
0=X\langle Y,JY\rangle=\langle\nabla_XY,JY\rangle+\langle Y,\nabla_XJY\rangle
=\langle -J\nabla_XY,Y\rangle+\langle \nabla_XJY,Y\rangle.
\end{equation}
\end{remark}
\begin{lemma}
For every vector field $X$ on $M$, it holds
\begin{equation}
\label{eq:torsioneX0}
\left\langle \Tor\left(X_0,X\right),JX\right\rangle = \frac{1}{2}\left\langle \left[JX,X_{0}\right],X\right\rangle +\frac{1}{2}\left\langle \left[X,X_{0}\right],JX\right\rangle.
\end{equation}
\end{lemma}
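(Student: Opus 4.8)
The plan is to derive everything from the defining properties of the Tanno connection in Definition~\ref{def:Tanno}, namely $\nabla X_0=0$ from (i) and the torsion identity (iii), together with the fact—established in the Remark preceding the statement—that $\nabla$ commutes with $J$. The key idea is that property (iii) lets one eliminate the a priori inaccessible covariant derivative $\nabla_{X_0}X$ and express it through Lie brackets alone, after which the result follows by pairing with $X$.

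First I would expand the two torsion terms via $\Tor(A,B)=\nabla_AB-\nabla_BA-[A,B]$. Since $\nabla X_0=0$, the terms $\nabla_XX_0$ and $\nabla_{JX}X_0$ vanish, giving
\begin{equation}
\Tor(X_0,X)=\nabla_{X_0}X-[X_0,X],\qquad \Tor(X_0,JX)=\nabla_{X_0}(JX)-[X_0,JX].
\end{equation}
Using $\nabla_{X_0}(JX)=J\nabla_{X_0}X$ from the Remark, the second identity becomes $\Tor(X_0,JX)=J\nabla_{X_0}X-[X_0,JX]$. Substituting both expressions into property (iii), $\Tor(X_0,JX)=-J\Tor(X_0,X)$, and cancelling the common term yields the key identity
\begin{equation}
2J\nabla_{X_0}X=[X_0,JX]+J[X_0,X],
\end{equation}
which encodes $\nabla_{X_0}X$ purely in terms of brackets.

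Finally I would pair this identity with $X$ and use that $J$ is skew-adjoint for the ambient metric, i.e.\ $\langle JA,B\rangle=-\langle A,JB\rangle$ (a consequence of $\langle X,JY\rangle=-\diff\omega(X,Y)$ by polarization). This gives $\langle\nabla_{X_0}X,JX\rangle=\tfrac12\langle[X_0,X],JX\rangle-\tfrac12\langle[X_0,JX],X\rangle$. Inserting this into $\langle\Tor(X_0,X),JX\rangle=\langle\nabla_{X_0}X,JX\rangle-\langle[X_0,X],JX\rangle$ and rewriting $-[X_0,\,\cdot\,]=[\,\cdot\,,X_0]$ produces the asserted formula \eqref{eq:torsioneX0}. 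Note that this argument needs no assumption that $X$ be horizontal, since $JX_0=0$ makes the computation insensitive to the Reeb component of $X$. The only delicate point, and the step I expect to require the most care, is the consistent bookkeeping of signs coming from the skew-adjointness of $J$ and the antisymmetry of the bracket; an alternative, more computational route through the torsion-corrected Koszul formula is available, but it requires one to verify separately that the curvature-type contributions proportional to $\|X\|^2$ cancel—a check that the argument above avoids entirely by invoking (iii) directly.
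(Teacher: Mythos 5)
Your proof is correct and follows essentially the same route as the paper: both exploit property (iii) of the Tanno connection together with $\nabla X_0=0$ to eliminate the covariant derivative $\nabla_{X_0}X$ and reduce the torsion pairing to Lie brackets, and your sign bookkeeping checks out. The only (cosmetic) difference is that the paper cancels the $\nabla_{X_0}$-terms by adding the two equal pairings $\left\langle \Tor(X_0,X),JX\right\rangle=\left\langle \Tor(X_0,JX),X\right\rangle$ and invoking metric compatibility with the orthogonality of $X$ and $JX$, whereas you first solve for $J\nabla_{X_0}X$ via the commutation $\nabla_{X_0}(JX)=J\nabla_{X_0}X$ from the preceding Remark and then substitute back; both auxiliary facts are available at that point in the text.
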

\begin{proof}
Using (iii) of Definition~\ref{def:Tanno}, we have the equality 
\begin{equation}
\left\langle \Tor\left(X_0,X\right),JX\right\rangle = \left\langle \Tor\left(X_0,JX\right),X\right\rangle.
\end{equation}
On the other hand, by the definition of the torsion, we have that
\begin{align}
 \left\langle \Tor \left(X_0,X\right) ,JX \right\rangle &= \left\langle \nabla_{X_0}X + \left[ X_0,X \right] ,JX   \right\rangle ,\\
 \left\langle \Tor\left(X_0,JX\right),X\right\rangle &= \left\langle \nabla_{X_0}JX + \left[X_0,JX\right],X\right\rangle.
\end{align}
Adding the two equations one obtains the statement recalling in addition that $\left\langle \nabla_{X_0}JX,X\right\rangle = - \left\langle \nabla_{X_0}X,JX\right\rangle $, being $X$ and $JX$ orthogonal.
\end{proof}

Let $\tau:\distr \to \distr$ be the linear operator $\tau(X)=\Tor(X_0,X)$ for $X$ a vector field in $\distr$. The symmetric matrix representing the operator $\tau$ with respect to the orthonormal and positively oriented frame $X_1,X_2$ for $\distr$ satisfying \eqref{eq:structurecoeff} is
\begin{equation}
\label{eq:matricetau}
\left(\begin{array}{cc}
c_{01}^1 & \dfrac{c_{02}^1+c_{01}^2}{2}\\
\dfrac{c_{02}^1+c_{01}^2}{2}& c_{02}^2
\end{array}\right).
\end{equation}

Notice that, for a smooth function $f$, the notation $\nabla f$ is compatible with its Riemannian gradient, if one interprets $\nabla$ as the Tanno connection.

\subsection{Sub-Riemannian distance}

Recall that the sub-Riemannian distance $d_{SR}$ between two points $x,y\in M$ is defined as
\begin{equation}
\label{eq:distance}
d_{SR}(x,y)=\inf \left\{ \ell(\gamma) \mid \gamma\in \mathcal{C}_{x,y}\right\}
\end{equation}
where $\ell(\gamma)=\int_{0 }^{T} \| \dot{\gamma} (t)\|dt$ and the infimum is taken over the set $\mathcal{C}_{x,y}$ of absolutely continuous (AC) horizontal curves connecting two fixed points $x,y$, i.e.,
\begin{equation}
\label{eq:Cxy}
\mathcal{C}_{x,y}=\{\gamma\in\mathrm{AC}([0,T], M)\mid\gamma(0)=x,\gamma(T)=y,\dot{\gamma}(t)\in \distr_{\gamma(t)} \mbox{ a.e. in } [0,T]\},
\end{equation}
Notice that the length of an horizontal curve is invariant by reparametrization, for more details we refer to \cite{thebook}.

The Hamiltonian $H:T^*M\to\R$ related to the sub-Riemannian  length minimization problem in a three-dimensional contact manifold is 
\begin{equation}
H(\lambda)=H(p,x)=\frac{1}{2}\left[\langle p,X_1(x)\rangle^2+\langle p,X_2(x)\rangle^2\right],
\end{equation} 
where $p \in T^*_xM$ and $\langle\cdot ,\cdot \rangle$ denotes the duality pairing. Considering the canonical symplectic form $\sigma\in \Lambda^2(T^*M)$, the induced Hamiltonian vector field $\vec{H}$ on $T^*M$ is defined by $dH(\cdot)=\sigma(\cdot,\vec{H})$. Then, the Hamilton equation is 
\begin{equation}
\label{eq:Hamilton}
\dot{\lambda}=\vec{H}(\lambda).
\end{equation}

Given a initial covector $\lambda_0\in T^*M$, the unique solution $\lambda(t)=e^{t\vec{H}}\left(\lambda_0\right)$ to \eqref{eq:Hamilton} is called normal extremal. Moreover, $\gamma(t)=\pi(\lambda(t))$, where  $\pi:T^*M\to M$ is the canonical projection on $M$, is a locally length minimizing curve parametrized with constant speed $\sqrt{2H(\lambda_0)}$.

In a three-dimensional contact sub-Riemannian manifold all length-minimizers arise in this way (cf. \cite[Prop. 4.8]{thebook}). 
To obtain arclength parametrized length minimizers one has to consider solutions of \eqref{eq:Hamilton} with $\lambda_0\in H^{-1}(1/2)$.



\section{Half-tubular neighborhoods and  coarea formula}
\label{s:coarea}

Let $\left(M,\distr,g\right)$ be a three-dimensional contact manifold equipped with a smooth volume measure $\mu$ and let $S$ be a smooth surface embedded in $M$ bounding a closed region $\Omega$.
Let us consider a positively oriented local orthonormal frame on $\distr$ given by the vector fields $X_{1},X_{2}$, and let us extend the sub-Riemannian metric $g$ to the Riemannian one for which the Reeb vector field $X_0$ is orthogonal to the distribution and with unit norm. Moreover, let us denote with $\omega^{1},\omega^{2},\omega^{0}$ on $T^{*}M$ the dual $1$-forms to $X_1,X_2,X_0$. Notice that $\omega^{0}=\omega$ is the normalized contact form. 
Recall that  $\omega \wedge \omega^{1} \wedge \omega^{2}=\omega\wedge \mathrm{d}\omega$ is the Popp's volume form of the three-dimensional contact sub-Riemannian manifold (see for instance \cite{thebook}).

\subsection{Regularity of the distance and tubular neighborhoods}

We recall that a point $x\in S$ is said to be characteristic if $T_{x}S=\distr_{x}$ and that the set $\Gamma(S)$ of characteristic points in $S$ is closed and has zero measure on the surface $S$, see \cite{balog03}.
As a consequence, we have a well-defined characteristic foliation on $S\setminus \Gamma(S)$ defined by the intersection $\distr _x \cap T_xS$.

\begin{definition}
\label{def:dsigned}
Let us consider on $M$ the sub-Riemannian signed distance function from the surface $S$: 
\begin{equation}
\delta_{S}(p)= \begin{cases}
\text{inf}\left\{ d_{SR}\left(p,x\right)\mid x\in S \right\},&p\notin \Omega,\\
-\text{inf}\left\{ d_{SR}\left(p,x\right)\mid x\in S \right\},&p\in \Omega.
 \end{cases}
\end{equation}
\end{definition}

Our aim is to obtain results for the volume of the external sub-Riemannian $\eps$-half-tubular neighborhood of the surface $S$, that is
\begin{eqnarray}
S_{\eps}=\left\{ x\in M\setminus \Omega\mid 0<\delta_{S}(x)< \eps\right\}.
\end{eqnarray}

More precisely, we are going to study local results considering a $\eps$-half-tubular neighborhood of open and relatively compact subsets of $S$ that do not contain characteristic points.
In order to do that, we report a key result about the smoothness of the sub-Riemannian distance function from surfaces.

\begin{theorem}
\label{thm:smoothness}
Let $S$ be a smooth surface bounding a closed domain $\Omega$ in a three-dimensional contact sub-Riemannian manifold $M$. Let us consider $U\subset S$ open, relatively compact and such that  $\overline U$ does not contain characteristic points. Then
\begin{enumerate}
\item there exist $\eps >0$ and a smooth map $G:(-\eps,\eps)\times U\to M$ that is a diffeomorphism on the image and such that for all $(t,x)\in(-\eps,\eps)\times U$
\begin{eqnarray}
\delta_S\left(G(t,x)\right)=t & \mbox{and} & dG(\partial_{t})=\nabla_{H}\delta_S;
\end{eqnarray}
\item  $\delta_S$ is smooth on $G((-\eps,\eps)\times U)$,  with $\|\nabla_H\delta_S\|=1$.
\end{enumerate}
\end{theorem}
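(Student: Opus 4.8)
The plan is to realize $G$ as the sub-Riemannian normal exponential map emanating from $S$, and to prove minimality of the normal geodesics by a Hamiltonian calibration argument. First I would construct the initial covector field. Since $\overline U$ is compact and disjoint from the closed characteristic set $\Gamma(S)$, I may fix a slightly larger relatively compact $\tilde U\subset S\setminus\Gamma(S)$ with $\overline U\subset\tilde U$ and $\overline{\tilde U}\cap\Gamma(S)=\emptyset$ (this enlargement is only needed to localize the distance in the last step). At each $x\in\tilde U$ the annihilator of $T_xS$ in $T_x^*M$ is one-dimensional, and it is not contained in the annihilator of $\distr_x$: otherwise it would kill $T_xS+\distr_x=T_xM$, which is impossible at a non-characteristic point. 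Hence there are exactly two covectors on this line lying in $H^{-1}(1/2)$, and I let $\lambda_0(x)$ be the one whose associated horizontal velocity $d\pi(\vec H(\lambda_0(x)))$ points out of $\Omega$; this velocity is the sub-Riemannian normal $N(x)$, which is transversal to $T_xS$. The section $x\mapsto\lambda_0(x)$ is smooth, and I set $G(t,x)=\pi(e^{t\vec H}(\lambda_0(x)))$, which is smooth because the Hamiltonian flow is.

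Next I would show $G$ is a local diffeomorphism. At $t=0$ one has $G(0,x)=x$, the restriction $dG_{(0,x)}|_{T_xS}$ is the inclusion, and $dG_{(0,x)}(\partial_t)=\dot\gamma_x(0)=N(x)\notin T_xS$, so $dG_{(0,x)}$ is an isomorphism of $\R\times T_xS$ onto $T_xM$. By the inverse function theorem, together with the compactness of $\overline{\tilde U}$ to obtain a uniform radius, there is $\eps>0$ such that $G$ is a diffeomorphism of $(-\eps,\eps)\times\tilde U$ onto an open neighborhood $V$ of $\tilde U$. I then define the smooth function $\rho:=\mathrm{pr}_1\circ G^{-1}:V\to(-\eps,\eps)$, so that $\rho(G(t,x))=t$ and $\rho|_{\tilde U}=0$.

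The heart of the proof is a sub-Riemannian Gauss lemma showing that $\rho$ solves the eikonal equation. Let $\lambda(t,x)=e^{t\vec H}(\lambda_0(x))$ and let $\varsigma$ be the tautological one-form on $T^*M$, so that $\sigma=\diff\varsigma$. For a local coordinate $x_i$ on $S$, the function $(t,x)\mapsto\langle\lambda(t,x),dG_{(t,x)}(\partial_{x_i})\rangle$ equals the pairing of $\partial_{x_i}$ with the pullback of $\varsigma$ along $\lambda$. Using that $\langle\lambda,dG(\partial_t)\rangle=\langle\lambda,\dot\gamma\rangle=2H\equiv1$ is constant, that $H\circ\lambda\equiv1/2$ is conserved along the flow, and Cartan's formula for $\diff(\lambda^*\varsigma)=\lambda^*\sigma$, a direct computation gives $\partial_t\langle\lambda,dG(\partial_{x_i})\rangle=0$. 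At $t=0$ this pairing is $\langle\lambda_0(x),dG_{(0,x)}(\partial_{x_i})\rangle=0$ by the transversality built into $\lambda_0$, hence it vanishes for all $t$. Since $\lambda(t,x)$ and $\diff\rho|_{G(t,x)}$ take the same values on the basis $dG(\partial_t),dG(\partial_{x_1}),dG(\partial_{x_2})$ of $T_{G(t,x)}M$, they coincide; therefore $H(\diff\rho)=H(\lambda)=1/2$, that is, $\|\nabla_H\rho\|\equiv1$ on $V$.

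Finally I would deduce minimality and conclude. The eikonal identity calibrates the normal geodesics: for any unit-speed horizontal curve $c:[0,L]\to V$ from a point of $S$ to $p=G(t,x)$ one has $|\rho(p)|=|\int_0^L g(\nabla_H\rho,\dot c)\,\diff s|\le L$, so every horizontal path from $S$ to $p$ inside $V$ has length at least $|t|$, while $\gamma_x|_{[0,t]}$ has length exactly $|t|$. Using the compactness of $\overline U$ and choosing $\eps$ small, any curve of length $<\eps$ from $p$ to $S$ remains inside $V$ and has foot point in $\tilde U$, so it cannot be shorter; hence $\dsr(p,S)=|t|$ and, with the sign fixed by the outward choice of $N$, $\delta_S(G(t,x))=t$. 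In particular $\delta_S=\rho$ is smooth on $V$ with $\|\nabla_H\delta_S\|=1$, and $dG(\partial_t)=\dot\gamma=(X_1\delta_S)X_1+(X_2\delta_S)X_2=\nabla_H\delta_S$; restricting $G$ to $(-\eps,\eps)\times U$ yields the statement. I expect the main obstacle to be precisely this last minimality and localization step — the sub-Riemannian tubular neighborhood property — whose resolution rests on the Gauss-lemma computation above and on the absence of nontrivial abnormal minimizers in the contact case.
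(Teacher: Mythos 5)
Your proposal is correct and follows essentially the same route as the paper: the paper's (sketched) proof also constructs $G$ as the normal exponential map $E(t\lambda(x))$ from the annihilator bundle of $S$, with $\lambda$ an outward-pointing section normalized by $2H(\lambda(x))=1$, and invokes compactness of $\overline U$ for a uniform $\eps$. The only difference is that the paper delegates the eikonal identity and the minimality of the normal geodesics to the cited references (Rossi, and Franceschi--Prandi--Rizzi), whereas you supply the Gauss-lemma calibration argument yourself; that argument is sound, including your (correctly flagged) localization step ensuring competitor curves stay in $V$ with foot point in $\tilde U$.
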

Theorem~\ref{thm:smoothness} can be found in \cite[Prop.\ 3.1]{FPR20} and in \cite[Thm.\ 3.3]{Rossi22} for compact hypersurfaces and for submanifolds of arbitrary codimension, respectively, without characteristic points, embedded in sub-Riemannian manifolds. 
The version presented here refers to \cite[Thm.\ 3.7]{Rossi22}, that, in turn, is a refinement of the previously mentioned results for the signed distance from non characteristic hypersurfaces.
Our goal is to study the measure of the following subsets of $S_\eps$.
\begin{definition}
 Let $U\subset S$ be open, relatively compact and such that the closure does not contain characteristic points. For $\eps>0$ we define the (external) local half-tubular neighborhood of $S$ relatively to $U$ as
\begin{equation}
\label{eq:Ur}
U_{\eps}=G\left((0,\eps)\times U\right) \subset S_\eps,
\end{equation}
where $G:(-\eps,\eps)\times U\to M$ is the diffeomorphism of Theorem~\ref{thm:smoothness}.
\end{definition}
\begin{remark}
 Notice that the set $U_{\eps}$ is well-defined thanks to Proposition~\ref{p:GI}. 
In fact, the map $G$ is unique and explicitly characterized in \eqref{eq:G}. 
The localized half-tubular neighborhood $U_\eps$ can be described as the union of the length-minimizing curves contained in $S_\eps$ realizing the distance from the surface $S$ with an endpoint on $U$. The description through the diffeomorphism $G$ guarantees that $U_\eps$ is open and smooth in $M$, hence measurable. 
\end{remark}
\begin{proof}[Proof of Theorem~\ref{thm:smoothness}]
We report only a sketch of the proof because it follows verbatim from that of \cite[Thm.\ 3.7]{Rossi22} with only minor adjustments to adapt  it to our setting.
The set $\Gamma(S)$ of characteristic points in $S$, is closed, thus we can consider $\widetilde{U}$ an open neighborhood of $\overline{U}$, the compact closure of $U$ in $S$, that does not contain characteristic points.
Let us define the annihilating bundle
$$\mathcal{A}\widetilde{U}=\{(x,\lambda)\in T^*M \mid x\in \widetilde{U},\langle\lambda,T_xS\rangle=0\}.$$
The exponential map $E:\mathcal{A}\widetilde{U}\to M$ defined by
$E(\lambda)=\pi\circ  e^{\vec{H}(\lambda)}$
is a local diffeomorphism  at every $(x,0)\in\overline{U}$.
Since $\overline{U}$ is compact, there exists $\eps>0$ such that the restriction of $E$ to the open set 
$$\{(x,\lambda)\mid x\in U, 0< \sqrt{2H(\lambda)}<\eps\}\subset\mathcal{A}\widetilde{U},$$
is a diffeomorphism onto its image.
Finally, being $U$ oriented, there exists a smooth outward pointing non-vanishing section of $\mathcal{A}\widetilde{U}$, i.e., $\lambda:\widetilde{U} \to \mathcal{A}\widetilde{U}$ such that $E(\lambda(x))\notin \Omega$ for every $x\in U$. Without loss of generality, we can suppose that $2H(\lambda(x))=1$, then the map we are looking for is defined as 
\begin{equation}
G(t,x)=E(t\lambda(x)). \qedhere
\end{equation} 
\end{proof}

\begin{remark}
The requirement on $U$ to have compact closure and without characteristic points is essential in order to define the diffeomorphism in Theorem~\ref{thm:smoothness}.
The absence of characteristic points guarantees the smoothness of the map, while the hypothesis of compactness of the closure of $U$ guarantees the existence of a minimum $\eps>0$ for which we can define the map $G$. 

%
\end{remark}

\subsection{Transversality conditions}

Let us fix $U\subset S$ open, relatively compact and such that its closure does not contain any characteristic point. 
Thanks to Theorem~\ref{thm:smoothness}, for a sufficiently small $\eps>0$ the set $G\left((-\eps,\eps)\times U\right)$ is a open submanifold in $M$. 
For the sake of simplicity, from now on we refer to $G\left((-\eps,\eps)\times U\right)\subset M$ as ambient space, and we denote with $\delta$ the sub-Riemannian signed distance function from $U$. 
In this way $\delta$ is a smooth function and since $\nabla_H\delta\neq 0$, we can treat $\delta$ as a defining function for $U$, i.e., $U=\delta^{-1}(0)$ and $U_\eps=\delta^{-1}(]0,\eps[)$.

In coordinates, identity $\left\|\nabla_H\delta\right\|^2=1$  of Theorem~\ref{thm:smoothness} is written as
\begin{equation}
\label{eq:gradH=1}
(X_1\delta)^2+(X_2\delta)^2=1,
\end{equation}
where we recall that $\nabla_H\delta=(X_1\delta)X_1+(X_2\delta)X_2$.

One can also introduce on $U$ a characteristic vector field as follows
\begin{equation}
\label{eq:XS}
X_S=(X_2\delta)X_1-(X_1\delta)X_2,
\end{equation}
and consider the orthogonal basis on $TU$ composed by $X_S$ and
\begin{equation}
\label{eq: YS}
Y_{S}=\left(X_{0}\delta\right)\nabla_{H}\delta-X_{0}.
\end{equation}

\begin{remark}
Notice that $X_{S}$ is a smooth vector field of unit norm, and it spans $\distr _x \cap T_xS$ for every $x\in U$. 
Moreover, we recall that if the surface contains characteristic points then $X_S$ is still well-defined through \eqref{eq:XS} replacing the sub-Riemanian distance function $\delta$ with a smooth function $f:M\to \R$ that locally defines the surface as its zero level set, i.e., $X_S=(X_2f)X_1-(X_1f)X_2$.
In this case, the resulting vector field is not of unit norm and it vanishes at characteristic points.
\end{remark}
The distance from the surface $\delta$ restricted to $U_\eps$ can be rewritten as follows
\begin{equation}
\label{eq:thed}
\delta(x)=\inf\left\{ \ell(\gamma)=\int_{0 }^{T} \| \dot{\gamma} (t)\|dt  \mid \gamma \in \mathcal{C}_{x_0,x},\, x_0\in U \right\}, \qquad \mbox{for } x\in U_\eps,
\end{equation}
where $\mathcal{C}_{x_0,x}$ is defined in \eqref{eq:Cxy}.
Let us consider $\gamma:[0,\delta(x)]\to M$ an arclength parametrized curve that realizes the distance \eqref{eq:thed} from $x=\gamma(\delta(x))$ to the surface. A lift $\lambda:[0,\delta(x)]\to T^*M$ of $\gamma$ that is a solution of \eqref{eq:Hamilton} has to satisfy the transversality conditions given by  the Pontryagin Maximum Principle (PMP) for optimal control problems with constraints on initial and terminal points (see \cite[Thm.~12.13]{AS04}). 
More precisely, the condition on $\lambda(0)=\lambda_0$ is 
\begin{equation}
\label{eq:transversality}
\left\langle \lambda_0,v\right\rangle =0, \qquad \forall v\in T_{\gamma(0)}U. 
\end{equation}
Exploiting this fact, one can compute the initial covector that identifies the arclength parametrized curve that realizes the distance of a fixed point in $U_\eps$ from the surface.

\begin{proposition}
\label{prop: covector}
Let $\gamma:[0,T]\to M$ be an arclength parametrized curve with $\gamma(0)\in U$ that minimizes the distance from $\gamma(T)\in U_\eps$ to $U$, and let $\lambda:[0,T]\to T^*M$ be the corresponding lift solving \eqref{eq:Hamilton}. Then the initial covector is 
\begin{equation}
\label{eq:CCovector}
\lambda_0=\left(X_{1}\delta\right)\omega^{1}+\left(X_{2}\delta\right)\omega^{2}+\left(X_{0}\delta\right)\omega \in T^*_{\gamma(0)}M.
\end{equation}
Moreover, $\gamma(t)=\pi \circ e^{t\vec{H}}(\lambda_0)$ and $\dot{\gamma}(0)=\nabla_H\delta$.
\end{proposition}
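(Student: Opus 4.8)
The plan is to pin down $\lambda_0$ from three ingredients: the transversality condition \eqref{eq:transversality} supplied by the PMP, the arclength normalization $2H(\lambda_0)=1$, and the orientation that forces $\gamma$ to leave $U$ into the region $\{\delta>0\}$. Writing $\lambda_0$ in the coframe dual to the adapted frame,
\begin{equation}
\lambda_0=a_1\,\omega^1+a_2\,\omega^2+a_0\,\omega,
\end{equation}
one has $a_i=\langle\lambda_0,X_i\rangle$ for $i=0,1,2$, and the task reduces to computing the three coefficients at the point $\gamma(0)\in U$.

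First I would use transversality. Since $\overline U$ contains no characteristic points, $T_{\gamma(0)}U$ is spanned by the vectors $X_S$ and $Y_S$ of \eqref{eq:XS} and \eqref{eq: YS}; a direct check using $\nabla\delta=\nabla_H\delta+(X_0\delta)X_0$ and \eqref{eq:gradH=1} confirms that both are orthogonal to $\nabla\delta$, hence tangent to $U=\delta^{-1}(0)$. Imposing $\langle\lambda_0,X_S\rangle=0$ gives
\begin{equation}
a_1\,(X_2\delta)-a_2\,(X_1\delta)=0,
\end{equation}
so that $(a_1,a_2)=c\,(X_1\delta,X_2\delta)$ for some scalar $c$. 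Imposing $\langle\lambda_0,Y_S\rangle=0$ and recalling $Y_S=(X_0\delta)\nabla_H\delta-X_0$ yields
\begin{equation}
a_0=(X_0\delta)\bigl(a_1\,(X_1\delta)+a_2\,(X_2\delta)\bigr)=c\,(X_0\delta)\bigl((X_1\delta)^2+(X_2\delta)^2\bigr)=c\,(X_0\delta),
\end{equation}
where the last equality is the eikonal identity \eqref{eq:gradH=1}.

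It remains to fix $c$. The arclength condition $2H(\lambda_0)=a_1^2+a_2^2=1$, combined once more with \eqref{eq:gradH=1}, gives $c^2=1$, so $c=\pm1$. To select the sign I would project the Hamiltonian vector field: for the Hamiltonian $H$ the velocity of the normal extremal is $\dot\gamma(0)=a_1X_1+a_2X_2=c\,\nabla_H\delta$, whence
\begin{equation}
\frac{\diff}{\diff t}\delta(\gamma(t))\Big|_{t=0}=\langle\nabla_H\delta,\dot\gamma(0)\rangle=c.
\end{equation}
Since $\gamma$ reaches $\gamma(T)\in U_\eps\subset\{\delta>0\}$ this derivative is positive (indeed $\delta(\gamma(t))=t$), forcing $c=1$. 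Thus $a_1=X_1\delta$, $a_2=X_2\delta$, $a_0=X_0\delta$, which is \eqref{eq:CCovector}, and simultaneously $\dot\gamma(0)=\nabla_H\delta$. Finally, because in a three-dimensional contact manifold every length minimizer is the projection of a normal extremal, the identity $\gamma(t)=\pi\circ e^{t\vec H}(\lambda_0)$ follows from uniqueness of the solution of \eqref{eq:Hamilton} with the initial datum $\lambda_0$ just found.

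The step I expect to be the most delicate is the sign bookkeeping: the transversality condition alone determines $\lambda_0$ only up to a nonzero multiple, and only the interplay between the arclength normalization and the outward orientation of $U$ (the requirement $\gamma(T)\in U_\eps$) removes the remaining ambiguity. The rest is a direct computation in the adapted coframe, resting on the eikonal identity \eqref{eq:gradH=1} and the explicit bases $X_S,Y_S$ of $T_{\gamma(0)}U$.
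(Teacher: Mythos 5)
Your proof is correct and follows essentially the same route as the paper's: transversality against the basis $X_S,Y_S$ of $T_{\gamma(0)}U$ to get $\lambda_0=c\bigl((X_1\delta)\omega^1+(X_2\delta)\omega^2+(X_0\delta)\omega\bigr)$, then the unit-speed normalization via \eqref{eq:gradH=1} to pin down $c$. The only difference is that you resolve the residual sign ambiguity $c=\pm1$ explicitly (via $\delta(\gamma(t))=t$ and the outward orientation), a point the paper's proof passes over silently when it asserts that unit speed ``implies $c=1$''.
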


\begin{proof}
Let us consider the local ortogonal basis on $U$ composed by $X_S,Y_S$ defined in \eqref{eq:XS} and \eqref{eq: YS}.
The transversality condition \eqref{eq:transversality} applied to the vector field $X_{S}$ is written as follows
\begin{equation}
0=\left\langle \lambda_0,X_{S}\right\rangle=\left(X_{2}\delta\right)\left\langle \lambda_0,X_{1}\right\rangle -\left(X_{1}\delta\right)\left\langle \lambda_0,X_{2}\right\rangle 
\end{equation}
from which one gets $\left(\left\langle \lambda_0,X_{1}\right\rangle ,\left\langle \lambda_0,X_{2}\right\rangle \right)=c\left(X_{1}\delta,X_{2}\delta\right)$
with $c\in\mathbb{R}$. 
Turning the attention to  $Y_{S}$,
\begin{equation}
0=\left\langle \lambda_0,Y_{S}\right\rangle =\left(X_{0}\delta\right)\left(X_{1}\delta\right)c\left(X_{1}\delta\right)+\left(X_{0}\delta\right)\left(X_{2}\delta\right)c\left(X_{2}\delta\right)-\left\langle \lambda_0,X_{0}\right\rangle.
\end{equation}
Hence, from \eqref{eq:gradH=1} we have that  $ \left\langle \lambda_0,X_{0}  \right\rangle =c(X_{0}\delta).$ That means that the covector $\lambda_0$ is a multiple of  
 \begin{equation}
\left(X_{1}\delta\right)\omega^{1}+\left(X_{2}\delta\right)\omega^{2}+\left(X_{0}\delta\right)\omega \in T^*_{\gamma(0)}M.
\end{equation}
Finally, writing in coordinates the Hamiltonian system \eqref{eq:Hamilton}, we have that 
\begin{eqnarray}
\dot{x}=\dfrac{\partial H}{\partial p},&\dot{p}=-\dfrac{\partial H}{\partial x}.
\end{eqnarray} 
Then, we deduce that 
\begin{equation}
\dot{\gamma}(0)=\left\langle \lambda_0,X_{1}\right\rangle X_1+\left\langle \lambda_0,X_{2}\right\rangle X_2=c\left(X_{1}\delta\right)X_1+c\left(X_{2}\delta\right)X_2.
\end{equation}
Requiring $\gamma$ to have unit speed, \eqref{eq:gradH=1} implies $c=1$ and we conclude that 
\begin{equation}
\dot{\gamma}(0)=\nabla_{H}\delta(x),  \quad  \lambda_0=\left(X_{1}\delta\right)\omega^{1}+\left(X_{2}\delta\right)\omega^{2}+\left(X_{0}\delta\right)\omega. \qedhere
\end{equation}
\end{proof}

\begin{remark}
Given $x\in U$, there is a unique arclength parametrized curve $\gamma_x:[0,T]\to M$, such that $\gamma(0)=x$ and that realizes the distance of $\gamma(T)$ from $U$. Namely, $\gamma_x(t)=\pi\circ e^{t\vec{H}}(\lambda_0)$ with $\lambda_0\in T^*_xM$ prescribed by \eqref{eq:CCovector}.
Therefore, Theorem~\ref{thm:smoothness} together with Proposition~\ref{prop: covector} provides a description of $U_\eps$ both as the disjoint union of copies of $U$ and as the disjoint union of arclength parametrized minimizing curves escaping from $U$.
\end{remark}

In conclusion, let us introduce the smooth map $\lambda_0:U\to T^*M$ as follows:
\begin{equation}
\label{eq:lambda0}
\lambda_0(x)=\left(X_{1}\delta\right)\omega^{1}+\left(X_{2}\delta\right)\omega^{2}+\left(X_{0}\delta\right)\omega \in T^*_{x}M.
\end{equation}

We have proved the following
\begin{proposition}\label{p:GI}
The map $G$ of Theorem~\ref{thm:smoothness} is uniquely characterized as 
\begin{equation}
\label{eq:G}
G(t,x)=\gamma_x(t)=\pi\circ e^{t\vec{H}}(\lambda_0(x))
\end{equation} 
for every $t\in (-\eps,\eps)$ and $x\in U$, with $\lambda_{0}$ defined by \eqref{eq:lambda0}. 
\end{proposition}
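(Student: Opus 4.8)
The plan is to assemble the two ingredients already in place: the explicit construction of $G$ carried out in the proof of Theorem~\ref{thm:smoothness}, and the computation of the initial covector in Proposition~\ref{prop: covector}. Recall that in Theorem~\ref{thm:smoothness} the map is defined by $G(t,x)=E(t\lambda(x))$, where $E(\lambda)=\pi\circ e^{\vec H}(\lambda)$ is the exponential map of the annihilator bundle $\mathcal{A}\widetilde U$ and $\lambda:\widetilde U\to\mathcal{A}\widetilde U$ is a smooth, outward-pointing section normalized by $2H(\lambda(x))=1$. The goal is therefore to rewrite this expression in the form \eqref{eq:G} and to match the section $\lambda(x)$ with the covector $\lambda_0(x)$ of \eqref{eq:lambda0}.

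First I would use the homogeneity of the Hamiltonian $H$, which is fiberwise quadratic in the momentum. As already recorded in the preliminaries, the normal extremal issuing from $\lambda_0$ is $\gamma(t)=\pi\circ e^{t\vec H}(\lambda_0)$; equivalently, rescaling the covector reparametrizes the geodesic, so that $\pi\circ e^{\vec H}(t\lambda)=\pi\circ e^{t\vec H}(\lambda)$. Applying this with $\lambda=\lambda(x)$ yields $G(t,x)=E(t\lambda(x))=\pi\circ e^{t\vec H}(\lambda(x))$, which is already of the desired shape once the section is identified.

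Next I would prove that $\lambda(x)=\lambda_0(x)$. Since $\dim M=3$ and $U$ is a surface, the fiber $\mathcal{A}_x\widetilde U\subset T^*_xM$ is one-dimensional; the normalization $2H=1$ then singles out exactly two covectors differing by a sign, and the outward-pointing requirement selects one of them. On the other hand, Proposition~\ref{prop: covector} exhibits $\lambda_0(x)$ as a covector annihilating $T_xU$ for which, by \eqref{eq:gradH=1}, one has $2H(\lambda_0(x))=(X_1\delta)^2+(X_2\delta)^2=1$, and whose geodesic satisfies $\dot\gamma(0)=\nabla_H\delta$, hence escapes $\Omega$. Thus $\lambda_0(x)$ satisfies all three defining conditions of $\lambda(x)$, so the two sections coincide and $G(t,x)=\pi\circ e^{t\vec H}(\lambda_0(x))=\gamma_x(t)$.

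Finally, uniqueness is inherited from the uniqueness of the normalized outward section of the one-dimensional bundle $\mathcal{A}\widetilde U$, equivalently from the uniqueness of the distance-minimizing curve issuing from each $x\in U$ inside the diffeomorphic region of Theorem~\ref{thm:smoothness}. I do not expect a hard estimate anywhere; the main point to handle with care is the homogeneity/reparametrization identity at the level of the flow, together with checking that the normalization $2H=1$ and the orientation conventions in the two sources agree, so that the sign ambiguity of the sub-Riemannian normal is resolved consistently and the outward section is genuinely $\lambda_0$.
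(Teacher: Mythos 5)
Your proposal is correct and follows essentially the same route as the paper, which obtains Proposition~\ref{p:GI} by combining the explicit construction $G(t,x)=E(t\lambda(x))$ from the proof of Theorem~\ref{thm:smoothness} with the transversality computation of Proposition~\ref{prop: covector}; your explicit use of the quadratic homogeneity $\pi\circ e^{\vec H}(t\lambda)=\pi\circ e^{t\vec H}(\lambda)$ and the identification of $\lambda_0(x)$ as the unique normalized outward section of the one-dimensional annihilator bundle is exactly the content the paper leaves implicit when it writes ``We have proved the following''. No gaps.
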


\subsection{The mean sub-Riemannian curvature}\label{s:mean}
Given a smooth volume form $\mu$ on $M$, the divergence of a vector field $X$ with respect to a measure $\mu$ is the function denoted $\text{div}_{\mu}X$ such that $\mathcal{L}_{X}\mu=(\text{div}_{\mu}X)\mu$ (where $\mathcal{L}_{X}$ is the Lie derivative along the vector field $X$). 

\begin{definition}
\label{def: H}
The \emph{sub-Riemannian mean curvature with respect to a smooth measure $\mu$} of a surface $U$ embedded in a three-dimensional contact manifold and that does not contain characteristic points, is the smooth function $\mathcal{H}_\mu:U_\eps\to\R$
\begin{equation}
\mathcal{H}_\mu=-\mathrm{div}_{\mu}\left(\nabla_{H}\delta\right).
\end{equation}
Moreover, the \emph{sub-Riemannian mean curvature} is the smooth function $\mathcal{H}:U_\eps\to \R$ defined as $\mathcal{H}=\mathcal{H}_{\nu}$, where $\Popp$ is the canonical measure on $M$ associated with the Popp's volume $\omega\wedge \mathrm{d}\omega=\omega\wedge\omega^1\wedge \omega^2$.
\end{definition}

\begin{remark}
\label{rem:H}
Let us consider $f:U_\eps \to \R$ a smooth defining function for $U$, i.e., such that $U=f^{-1}(0)$ and $\mathrm{d}f\neq 0$ on $U$.
Then $\left\|\nabla_{H}f\right\|\neq 0$ on $U$ (since $U$ does not contain characteristic points), and it holds that
\begin{equation}
\label{eq:HU}
\mathcal{H}|_U=-\mathrm{div}_{\Popp}\left(\frac{\nabla_{H}f}{\left\|\nabla_{H}f\right\|}\right).
\end{equation}
Actually, identity \eqref{eq:HU} is valid if the orientation of the vector field $\nabla_{H}f$ coincides with that of $\nabla_H\delta$, in other words if $\langle\nabla_Hf,\nabla_H\delta\rangle>0$. 
Otherwise the equality holds with the opposite sign.
Notice that in general the right hand side of \eqref{eq:HU} is well defined on $S$ but outside the surface it still depends on the choice of the function $f$.
Moreover, we recall that since every smooth measure $\mu$ can be interpreted as a multiple of the Popp's measure by a smooth density, i.e., $\mu= h \Popp$ with $h:M\to \R$ non vanishing. 
From the properties of the divergence operator we have that
$$\mathrm{div}_{h \Popp}X=\mathrm{div}_{\Popp}X+\frac{Xh}{h},$$
 and we obtain the formula
 \begin{equation}
\label{eq:Hm}
\mathcal{H}_\mu =- \mathrm{div}_{h \Popp}(\nabla_H\delta)= \mathcal{H}-\frac{\nabla_H\delta (h)}{h}.
\end{equation}
\end{remark}

\begin{lemma}
\label{lem: H}
The expression in coordinates of the sub-Riemannian mean curvature $\mathcal{H}:U_\eps\to \R$ with respect to the Popp measure $\Popp$ is: 
\begin{equation} 
 \label{eq: H}
\mathcal{H}=-X_{1}X_{1}\delta-X_{2}X_{2}\delta-c_{12}^{2}\left(X_{1}\delta\right)+c_{12}^{1}\left(X_{2}\delta\right).
\end{equation}
In particular, in terms of \eqref{eq:structurecoeff},
\begin{equation}
\label{eq:divergenceX}
\mathrm{div}_{\Popp}\left(X_{1}\right)=c_{12}^{2}\quad\text{ and }\quad\mathrm{div}_{\Popp}\left(X_{2}\right)=-c_{12}^{1}.
\end{equation}
\end{lemma}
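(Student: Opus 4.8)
The plan is to start from the definition $\mathcal{H}=-\mathrm{div}_{\Popp}\left(\nabla_{H}\delta\right)$, insert the decomposition $\nabla_{H}\delta=(X_1\delta)X_1+(X_2\delta)X_2$, and apply the Leibniz rule for the divergence, $\mathrm{div}_{\mu}(fX)=f\,\mathrm{div}_{\mu}X+Xf$. The latter is immediate from $\mathcal{L}_{fX}\mu=\diff(f\,\iota_X\mu)=\diff f\wedge\iota_X\mu+f\,\diff(\iota_X\mu)=(Xf)\mu+f(\mathrm{div}_\mu X)\mu$, using that $\diff\mu=0$ for a top-degree form and that $\diff f\wedge\iota_X\mu=(Xf)\mu$. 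This immediately yields
\[
\mathcal{H}=-X_1X_1\delta-X_2X_2\delta-(X_1\delta)\,\mathrm{div}_{\Popp}X_1-(X_2\delta)\,\mathrm{div}_{\Popp}X_2,
\]
so that the expression \eqref{eq: H} for $\mathcal{H}$ reduces entirely to the two divergence identities \eqref{eq:divergenceX}, and the whole proof concentrates on computing $\mathrm{div}_{\Popp}X_1$ and $\mathrm{div}_{\Popp}X_2$.

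For these I would use Cartan's formula: since $\Popp=\omega\wedge\omega^1\wedge\omega^2=\omega^0\wedge\omega^1\wedge\omega^2$ is a top form, $\mathcal{L}_{X_i}\Popp=\diff(\iota_{X_i}\Popp)$. Contracting with the antiderivation property of $\iota_{X_i}$ and $\omega^k(X_j)=\delta^k_j$ gives $\iota_{X_1}\Popp=-\omega^0\wedge\omega^2$ and $\iota_{X_2}\Popp=\omega^0\wedge\omega^1$. It then remains to differentiate these two-forms, for which the exterior derivatives of the coframe are needed; these are read off from the structure equations \eqref{eq:structurecoeff} via $\diff\omega^k(X_i,X_j)=-\omega^k([X_i,X_j])$. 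In particular $\diff\omega^0=\omega^1\wedge\omega^2$ (which is just the normalization $\diff\omega(X_1,X_2)=1$), while the $\omega^1\wedge\omega^2$-components of $\diff\omega^1$ and $\diff\omega^2$ are $c_{12}^1$ and $c_{12}^2$ respectively.

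Substituting and using repeatedly that $\omega^k\wedge\omega^k=0$, every wedge except the one proportional to the full volume drops out, leaving $\mathcal{L}_{X_1}\Popp=c_{12}^2\,\Popp$ and $\mathcal{L}_{X_2}\Popp=-c_{12}^1\,\Popp$, which is exactly \eqref{eq:divergenceX}. Plugging back into the displayed expansion gives \eqref{eq: H}.

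I do not foresee a genuine obstacle here: the only delicate point is sign and orientation bookkeeping, namely getting the signs of the contractions $\iota_{X_i}\Popp$ and of the structure coefficients appearing in $\diff\omega^k$ correct, together with the fact that the chosen ordering $\omega^0\wedge\omega^1\wedge\omega^2$ of the Popp volume is the one compatible with $X_1,X_2$ positively oriented. As an alternative to Cartan's formula one could differentiate the identity $\Popp(X_0,X_1,X_2)=1$ directly, writing $\mathcal{L}_{X_k}\Popp(X_0,X_1,X_2)=-\sum_i\Popp(X_0,\dots,[X_k,X_i],\dots,X_2)$ and keeping only the diagonal contributions; this reproduces the same two values, but I would keep the coframe computation as it is self-contained and transparent.
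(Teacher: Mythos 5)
Your proposal is correct and follows essentially the same route as the paper: both reduce \eqref{eq: H} to the divergence identities \eqref{eq:divergenceX} via the Leibniz rule $\mathrm{div}_\mu(aX)=X(a)+a\,\mathrm{div}_\mu(X)$ and then extract $\mathrm{div}_{\Popp}X_1=c_{12}^2$, $\mathrm{div}_{\Popp}X_2=-c_{12}^1$ from the structure constants in \eqref{eq:structurecoeff}. The only cosmetic difference is that you compute $\mathcal{L}_{X_i}\Popp$ via Cartan's formula $\diff\circ\iota_{X_i}$ applied to the coframe, whereas the paper Lie-differentiates the wedge product $\omega\wedge\omega^1\wedge\omega^2$ factor by factor; the resulting signs and values agree.
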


\begin{proof}
Applying the linearity of the divergence and the Leibnitz rule for
which $\mathrm{div}(aX)=X(a)+a\mathrm{div}(X)$
for $a$ a smooth function and $X$ a smooth vector field,
\begin{align}
\mathcal{H} & =-\mathrm{div}_{\Popp}\left(\left(X_{1}\delta\right)X_{1}+\left(X_{2}\delta\right)X_{2}\right)\\
 & =-X_{1}X_{1}\delta-\left(X_{1}\delta\right)\mathrm{div}_{\Popp}\left(X_{1}\right)-X_{2}X_{2}\delta-\left(X_{2}\delta\right)\mathrm{div}_{\Popp}\left(X_{2}\right)
\end{align}
Now, considering that for every  $i=0,1,2$
\begin{equation}
\mathcal{L}_{X}\omega^{i}=\sum_{j=0}^{2}\left\langle \mathcal{L}_{X}\omega^{i},X_{j}\right\rangle \omega^{j}=\sum_{j=0}^{2}\left\langle \omega^{i},\mathcal{L}_{-X}X_{j}\right\rangle \omega^{j}=\sum_{j=0}^{2}\left\langle \omega^{i},\left[-X,X_{j}\right]\right\rangle \omega^{j},
\end{equation}
where $\mathcal{L}_{X}$ is the Lie derivative along the vector field $X$, we obtain that
\begin{align}
\text{div}_{\Popp}\left(X\right)\omega\wedge \omega^1\wedge\omega^2 & =\left(\mathcal{L}_{X}\omega\right)\wedge\omega^{1}\wedge\omega^{2}+\omega\wedge\left(\mathcal{L}_{X}\omega^{1}\right)\wedge\omega^{2}+\omega\wedge\omega^{1}\wedge\left(\mathcal{L}_{X}\omega^{2}\right)\\
 & =\sum_{i=0}^{2}\left\langle \omega^{i},\left[X_{i},X\right]\right\rangle \omega\wedge\omega^{1}\wedge\omega^{2}.
\end{align}
Therefore $\text{div}_{\Popp}\left(X_{1}\right)=c_{12}^{2}$ and $\text{div}_{\Popp}\left(X_{2}\right)=-c_{12}^{1}$ which proves the statement.
\end{proof}

\begin{remark}
\label{rem:Hmu}
Following the proof of Lemma~\ref{lem: H}, one can obtain an explicit expression of $\mathcal{H}_\mu$ with $\mu=h\Popp$ a smooth measure where $h$ is a positive smooth density.
More precisely, let $X_\theta$ be a vector field transverse to the distribution $\distr$ such that $\mu(X_1,X_2,X_\theta)=1$.
Let us denote by $\{\omega^1,\omega^2,\omega^\theta\}$ the dual basis to $\{X_{1},X_{2},X_\theta\}$, then $\mu=\omega^\theta\wedge\omega^1\wedge\omega^2$.
Furthermore,  let us consider the structure functions associated to $X_1,X_2,X_\theta$ defined as in \eqref{eq:structurecoeff} (notice that $X_{\theta}$ here is transverse but not necessarily the Reeb vector field). Namely, we set
\begin{equation}
c^k_{ij}=\langle \omega^k , [X_j,X_i]\rangle,\qquad i,j,k=1,2,\theta.
\end{equation}
Replacing $X_0$ and $\omega$ with $X_\theta$ and $\omega^\theta$ in the proof of Lemma \ref{lem: H}, one obtains
\begin{equation}
\label{eq:divergenceXmu}
\mathrm{div}_{\mu}\left(X_{1}\right)=c_{12}^{2}-c^\theta_{\theta1},\qquad \quad\mathrm{div}_{\mu}\left(X_{2}\right)=-c_{12}^{1}-c^\theta_{\theta2}. 
\end{equation}
and consequently
\begin{equation} 
\mathcal{H}_\mu=-X_{1}X_{1}\delta-X_{2}X_{2}\delta-(c_{12}^{2}-c^\theta_{\theta1})\left(X_{1}\delta\right)+(c_{12}^{1}+c^\theta_{\theta2})\left(X_{2}\delta\right).
\label{eq:Hmu2}
\end{equation}
\end{remark}
\medskip
Let us consider a smooth measure $\mu$ on $M$ which is the Riemannian volume with respect to a Riemannian metric $g$ that extends the sub-Riemannian metric.
 Therefore, an orthonormal frame for $g$ is of the form $\{X_1,X_2,X_\theta\}$ as in Remark~\ref{rem:Hmu}. 

One can define a corresponding Riemannian approximation of the contact sub-Riemannian manifold as $(M,g^\eps)$, where $g^\eps$ is the Riemannian metric for which $\{X_1,X_2,\eps X_\theta\}$ is an orthonormal frame.

The next proposition states that the sub-Riemannian mean curvature with respect to $\mu$ is the limit of the corresponding weighted Riemannian mean curvature. 
\begin{proposition}
\label{prop:Hriemannapproximation} Let $\mu$ be a smooth measure which is the Riemannian volume with respect to a metric $g$ that extends the sub-Riemannian metric.
The mean sub-Riemannian curvature $\mathcal{H}_\mu$ of a regular surface $S$ in $M$ is the limit of the mean curvatures $H^\eps_{\mu}$ of $S$ in the corresponding Riemannian approximations  
\begin{equation}
\mathcal{H}_\mu=\lim_{\eps\to 0}H_\mu^\eps.
\end{equation}
\end{proposition}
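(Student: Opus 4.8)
The plan is to compute the $g^\eps$-unit normal to $S$ explicitly, to verify that it converges to the horizontal normal $N=\nabla_H\delta$ as $\eps\to 0$, and then to pass the limit inside the divergence. First I would fix the $g$-orthonormal frame $\{X_1,X_2,X_\theta\}$ of the statement, so that $\mu=\omega^\theta\wedge\omega^1\wedge\omega^2$ while $\{X_1,X_2,\eps X_\theta\}$ is $g^\eps$-orthonormal. Since the Riemannian volume of $g^\eps$ equals $\eps^{-1}\mu$, and the divergence is unaffected by multiplying the reference measure by a fixed positive constant, the weighted mean curvature in the approximation satisfies $H^\eps_\mu=-\mathrm{div}_\mu(\nu^\eps)$, where $\nu^\eps$ is the $g^\eps$-unit normal to $S$ oriented as $\nabla_H\delta$ (the weighting correction vanishes precisely because $\mu$ and $\mathrm{vol}_{g^\eps}$ differ by a constant). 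This reduces the whole statement to controlling $\nu^\eps$ together with the continuity of $\mathrm{div}_\mu$.

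Next I would compute $\nu^\eps$ from the defining function $\delta$. Solving $g^\eps(\nabla^\eps\delta,\cdot)=\mathrm{d}\delta$ in the frame above, and using that $g^\eps(X_\theta,X_\theta)=\eps^{-2}$ produces the factor $\eps^2$ on the transverse component, gives
\begin{equation}
\nabla^\eps\delta=(X_1\delta)X_1+(X_2\delta)X_2+\eps^2(X_\theta\delta)X_\theta.
\end{equation}
By the eikonal identity $\|\nabla_H\delta\|^2=(X_1\delta)^2+(X_2\delta)^2=1$ from Theorem~\ref{thm:smoothness}, its $g^\eps$-norm satisfies $\|\nabla^\eps\delta\|_{g^\eps}^2=1+\eps^2(X_\theta\delta)^2$, so that
\begin{equation}
\nu^\eps=\frac{(X_1\delta)X_1+(X_2\delta)X_2+\eps^2(X_\theta\delta)X_\theta}{\sqrt{1+\eps^2(X_\theta\delta)^2}}.
\end{equation}
This expression depends smoothly on $\eps$ (indeed on $\eps^2$) and on the point, and converges, together with all its derivatives, to $\nabla_H\delta=N$ on the relevant compact neighborhood as $\eps\to 0$.

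Finally, since $\mathrm{div}_\mu$ is a first-order operator with smooth coefficients and $\nu^\eps\to\nabla_H\delta$ in $C^1$ uniformly on compact sets, I would interchange limit and divergence to obtain $\lim_{\eps\to 0}H^\eps_\mu=-\mathrm{div}_\mu(\nabla_H\delta)=\mathcal{H}_\mu$, which is the claim; applying the same computation to each level set $\delta^{-1}(t)$ upgrades this to an identity of functions on the whole neighborhood $U_\eps$ on which $\mathcal{H}_\mu$ is defined. The only genuinely delicate points are the bookkeeping identifying $H^\eps_\mu$ with $-\mathrm{div}_\mu(\nu^\eps)$, namely the observation that rescaling $\mu$ by the constant $\eps^{-1}$ leaves the divergence unchanged, and the justification of the limit interchange; both are routine once the smooth dependence of $\nu^\eps$ on $\eps$ is established, so I expect no substantial obstacle beyond the explicit computation of $\nu^\eps$.
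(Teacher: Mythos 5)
Your proof is correct, but it follows a genuinely different route from the paper. The paper's proof (Appendix~A) computes $H^\eps$ head-on as the trace of the second fundamental form, $\langle\nabla^\eps_{X_S}X_S,N^\eps\rangle_\eps+\langle\nabla^\eps_{Y^\eps}Y^\eps,N^\eps\rangle_\eps$, using the Koszul formula for the Levi-Civita connection of $g^\eps$; the delicate point there is that the $Y^\eps$-term carries an overall factor $\eps^2$ which is cancelled by the $\eps^{-2}$ hidden in $\langle\nabla^\eps_{X_\theta}X_\theta,N\rangle_\eps$, and this cancellation is exactly what produces the structure-constant correction $(X_1\delta)c^\theta_{\theta 1}+(X_2\delta)c^\theta_{\theta 2}$ that turns $\mathcal{H}$ into $\mathcal{H}_\mu$. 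You instead invoke the identity ``mean curvature $=-\mathrm{div}$ of a unit normal extension'' and reduce everything to the smooth convergence $\nu^\eps\to\nabla_H\delta$; your computation of $\nabla^\eps\delta$ and of $\nu^\eps$ agrees with the paper's $N^\eps$, the observation that $\mathrm{vol}_{g^\eps}=\eps^{-1}\mu$ so that $\mathrm{div}_{\mathrm{vol}_{g^\eps}}=\mathrm{div}_\mu$ is correct, and the orthogonality of $\nu^\eps$ to $X_S$ and $Y^\eps$ holds on every level set $\delta^{-1}(t)$, so the identity $H^\eps_\mu=-\mathrm{div}_\mu(\nu^\eps)$ is valid throughout the tubular neighborhood, not just on $S$. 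Your argument is shorter and more conceptual: the correction distinguishing $\mathcal{H}_\mu$ from $\mathcal{H}$ never has to be isolated, since it is already encoded in $\mathrm{div}_\mu$. What the paper's brute-force computation buys is the explicit form of the two contributions (the tangential one giving $\mathcal{H}$, the $Y^\eps$-one giving the $c^\theta_{\theta i}$ terms) together with the explicit $O(\eps^2)$ remainder --- though your expansion of $\mathrm{div}_\mu(\nu^\eps)$ in powers of $\eps^2$ would recover that rate as well. The only point worth stating more carefully is that the ``interchange of limit and divergence'' is really just the observation that $\nu^\eps$ depends polynomially on $\eps^2$ through smooth coefficients, so $\mathrm{div}_\mu(\nu^\eps)$ is jointly smooth in $(\eps,p)$ and can be evaluated at $\eps=0$; no uniform-convergence argument is actually needed.
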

 For the reader's convenience, the proof is postponed in Appendix~\ref{a:mean}.

\subsection{Coarea formula} 

In order to compute the volume of $U_\eps$ we need a sub-Riemannian version of the coarea formula. We decompose the space $U_\eps=\bigcup_{t\in(0,\eps)} U^t$, where $U^t=\delta^{-1}(t)$ and we choose an adapted frame to such a decomposition.

\begin{definition}
\label{def: moving frame}
We define the orthogonal moving frame on $TU_{\eps}:$
\begin{align}
F_{1}&=\left(X_{2}\delta\right)X_{1}-\left(X_{1}\delta\right)X_{2},\\
F_{2}&=\left(X_{0}\delta\right)\nabla_{H}\delta-X_{0},\\
N&=\nabla_{H}\delta.
\end{align}
We notice that, for fixed $t\in (0,\eps)$, the vectors $F_{1},F_{2}$ define a basis of $TU^{t}$, while $N$ is the sub-Riemannian normal. Moreover, $F_1,F_2$ extend $X_{S}$ and $Y_{S}$ defined on $U$ given in \eqref{eq:XS} and \eqref{eq: YS} respectively.
The corresponding dual basis on $T^*U_\eps$ is:
\begin{align}
\phi^{1}&=\left(X_{2}\delta\right)\omega^{1}-\left(X_{1}\delta\right)\omega^{2},\\
\phi^{2}&=- \omega,\\
\eta&=\left(X_{1}\delta\right)\omega^{1}+\left(X_{2}\delta\right)\omega^{2}+\left(X_{0}\delta\right)\omega.
\end{align}
\end{definition}

Let us now recall the classical Riemannian coarea formula for smooth functions (see for instance \cite{chavel}).
\begin{proposition}[Coarea formula] 
\label{prop: coareaR}
Let $(M,g)$ be a Riemannian manifold equipped with a smooth measure $\mu$. Let $\Phi:M\to\mathbb{R}$ be a smooth function such that $\nabla\Phi\neq 0$ and $f:M\to\mathbb{R}$ be a measurable non negative function, then 
\begin{equation}
\int_{M}f\left\|\nabla\Phi\right\|dV_{\mu}=\int_{\R}\int_{\Phi^{-1}(t)}f dA^R_{t}dt,
\end{equation}
where $dV_{\mu}$ and $dA^R_{t}$ are respectively the volume element on $M$ induced by $\mu$, and the induced Riemannian area form on $\Phi^{-1}(t)$. 
\end{proposition}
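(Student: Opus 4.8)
The plan is to reduce the global identity to a local Fubini computation, after a pointwise reformulation of the integrand in terms of differential forms. Since $\nabla\Phi$ never vanishes, $\mathrm{d}\Phi\neq 0$ everywhere, so every $t$ in the image of $\Phi$ is a regular value; by the implicit function theorem each fiber $\Phi^{-1}(t)$ is a smooth embedded hypersurface, and the fibers foliate the region where we work. I write $\nu=\nabla\Phi/\|\nabla\Phi\|$ for the unit normal to the fibers, so that its metric dual is $\nu^\flat=\mathrm{d}\Phi/\|\nabla\Phi\|$, and I take the induced area form on $\Phi^{-1}(t)$ to be $dA^R_t=(\iota_\nu dV_\mu)\big|_{\Phi^{-1}(t)}$, consistently with the convention $dA_\mu=\iota_N dV_\mu$ used earlier in the paper.

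The key pointwise identity is
\begin{equation}
\left\|\nabla\Phi\right\|\,dV_\mu=\mathrm{d}\Phi\wedge\left(\iota_\nu dV_\mu\right).
\end{equation}
To prove it I would use the elementary fact that $\nu^\flat\wedge\iota_\nu\alpha=\alpha$ for any top-degree form $\alpha$ and any unit vector $\nu$ (checked at once on an oriented orthonormal coframe whose last covector is $\nu^\flat$). Multiplying by $\|\nabla\Phi\|$ and substituting $\|\nabla\Phi\|\,\nu^\flat=\mathrm{d}\Phi$ gives the identity. This turns the integrand on the left-hand side of the statement into the wedge of $\mathrm{d}\Phi$ with a form which, restricted to each fiber, is precisely $dA^R_t$.

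Next I would localize by a partition of unity subordinate to a cover by charts, so that it suffices to treat $f$ supported in a single chart. Since $\mathrm{d}\Phi\neq 0$, I complete $\Phi$ to a coordinate system $(x^1,\dots,x^{n-1},s)$ with $s=\Phi$, in which the slices $\{s=t\}$ are pieces of the level sets. Writing $dV_\mu=\rho\,dV_g$ and evaluating in an adapted orthonormal frame, one finds $\iota_\nu dV_\mu=\pm\rho\,e^1\wedge\cdots\wedge e^{n-1}$, so that its restriction to $\{s=t\}$ is exactly $dA^R_t$. Combining this with the pointwise identity, the Euclidean integral of $f\left\|\nabla\Phi\right\|$ against $dV_\mu$ in these coordinates factorizes, and the classical Fubini theorem (integrating first over the slice and then in $s=t\in\R$) yields
\begin{equation}
\int_M f\left\|\nabla\Phi\right\|dV_\mu=\int_{\R}\left(\int_{\Phi^{-1}(t)}f\,dA^R_t\right)dt.
\end{equation}
Summing over the partition of unity gives the statement; the nonnegativity of $f$ makes the orientation signs irrelevant and justifies the interchange of integrals.

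The main obstacle is purely one of bookkeeping in the local step: one must verify that the restriction of $\iota_\nu dV_\mu$ to a slice genuinely coincides with the intrinsically defined induced area form $dA^R_t$, and that the coordinate density produced by the form identity is exactly the one occurring in the Fubini factorization, so that no spurious weight is introduced. The measurability of $t\mapsto\int_{\Phi^{-1}(t)}f\,dA^R_t$ and the extension from smooth to merely measurable nonnegative $f$ are then routine, and in any event the result is classical, so in the paper it is simply invoked from \cite{chavel}.
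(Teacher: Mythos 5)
Your proposal is correct, but note that the paper does not actually prove this proposition: it is stated as a recollection of the classical Riemannian coarea formula and simply cited from \cite{chavel}, and the paper's real work is in Proposition~\ref{prop:coareaSR}, where the Riemannian statement is \emph{used} to derive the sub-Riemannian version. What you have written is the standard textbook proof of the cited fact, and it is sound: the pointwise identity $\left\|\nabla\Phi\right\| dV_\mu=\mathrm{d}\Phi\wedge\iota_\nu dV_\mu$ (via $\nu^\flat\wedge\iota_\nu\alpha=\alpha$ for a top form and a unit covector), localization by a partition of unity, adapted coordinates $(x^1,\dots,x^{n-1},s)$ with $s=\Phi$ so that only the part of $\iota_\nu dV_\mu$ containing no $\mathrm{d}s$ survives the wedge, identification of that part restricted to a slice with $dA^R_t$, and Tonelli. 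The two points you flag as routine really are the only places needing care: the passage from smooth (or continuous compactly supported) $f$ to measurable nonnegative $f$ by monotone approximation, which also gives measurability of $t\mapsto\int_{\Phi^{-1}(t)}f\,dA^R_t$; and the orientation bookkeeping, which is harmless because the localization is chartwise and nonnegativity of the integrand fixes all signs (equivalently, one can phrase the whole argument with densities instead of forms, which also covers non-orientable $M$). So your write-up supplies a proof where the paper supplies a reference; there is no discrepancy of substance.
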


We recall that the induced area form on $\Phi^{-1}(t)$, for $t\in \R$, is given by the interior product of the volume form $dV_\mu$ by $\frac{\nabla\Phi}{\left|\nabla\Phi\right|}$, the Riemannian normal to the surface, i.e.,
\begin{equation}
dA^R_{t}=\iota\left(\frac{\nabla\Phi}{\left\|\nabla\Phi\right\|}\right)dV_\mu\mid_{ T\Phi^{-1}(t)}.
\end{equation}  
Now, let us consider $\left(M,\distr,g\right)$ a three-dimensional contact manifold equipped with a smooth measure $\mu$. Consider $\Phi:M\to\mathbb{R}$ a smooth function such that $\nabla_H\Phi\neq 0$, we define the induced sub-Riemannian area form on the surfaces $\Phi^{-1}(t)$
\begin{equation}
dA_{t}=\iota\left(\frac{\nabla_H\Phi}{\left\|\nabla_H\Phi\right\|}\right)dV_{\mu\mid T\Phi^{-1}(t)}.
\end{equation}  
This is the restriction of the volume form $dV_\mu$ on $M$, induced by the measure $\mu$, by the sub-Riemannian normal $\frac{\nabla_H\Phi}{\left\|\nabla_H\Phi\right\|}$ to the surface $\Phi^{-1}(t)$. 

We are ready to reformulate the coarea formula as follows.

\begin{proposition}[Sub-Riemannian Coarea] \label{prop:coareaSR}
Let $\left(M,\distr,g\right)$ be a three-dimensional contact manifold equipped with a smooth measure $\mu$. Given $\Phi:M\to\mathbb{R}$ a smooth function such that $\left\|\nabla_H\Phi\right\|\neq0$ and   $f:M\to\mathbb{R}$ measurable non negative
\begin{equation}
\int_{M}f\left\|\nabla_H\Phi\right\|d\mu=\int_{\mathbb{R}}\int_{\Phi^{-1}(t)}f_{\mid\Phi^{-1}(t)}dA_{t}dt,
\end{equation}
where $dA_t$ is the induced sub-Riemannian area form on $\Phi^{-1}(t)$. 
\end{proposition}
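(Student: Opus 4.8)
The plan is to deduce the sub-Riemannian coarea formula from its Riemannian counterpart (Proposition~\ref{prop: coareaR}) by carefully comparing the two gradients, and the two induced area forms on the level sets $\Phi^{-1}(t)$, for the ambient Riemannian metric $\langle\cdot,\cdot\rangle$ rendering $X_1,X_2,X_0$ orthonormal. By \eqref{eq:gradient} one has the decomposition $\nabla\Phi=\nabla_H\Phi+(X_0\Phi)X_0$, whence $\|\nabla\Phi\|^2=\|\nabla_H\Phi\|^2+(X_0\Phi)^2$; in particular $\nabla\Phi\neq0$ wherever $\nabla_H\Phi\neq0$, so the Riemannian formula is applicable on the relevant region. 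Moreover, since $\nabla_H\Phi$ is horizontal and thus orthogonal to $X_0$, one gets the identity $\langle\nabla_H\Phi,\nabla\Phi\rangle=\|\nabla_H\Phi\|^2$, which will be the crucial algebraic input. I also record that $d\mu=dV_\mu$, so the two statements are comparing the same ambient integral.

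The heart of the argument is to relate the Riemannian area form $dA^R_t$ with the sub-Riemannian one $dA_t$ on a fixed level set $\Sigma=\Phi^{-1}(t)$. Both are obtained by contracting the volume form $dV_\mu$ with a unit vector — respectively $n=\nabla\Phi/\|\nabla\Phi\|$ and $\nabla_H\Phi/\|\nabla_H\Phi\|$ — and restricting to $T\Sigma$. Since $\dim\Sigma=2$ inside a three-manifold, the contraction of the three-form $dV_\mu$ by any vector tangent to $\Sigma$ vanishes upon restriction to $T\Sigma$; hence only the components along the normal $n$ contribute. The normal component of the sub-Riemannian unit field is
\begin{equation}
\left\langle \frac{\nabla_H\Phi}{\|\nabla_H\Phi\|},\,n\right\rangle=\frac{\langle\nabla_H\Phi,\nabla\Phi\rangle}{\|\nabla_H\Phi\|\,\|\nabla\Phi\|}=\frac{\|\nabla_H\Phi\|}{\|\nabla\Phi\|}>0,
\end{equation}
using the identity from the previous step. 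This yields the pointwise relation between the area forms,
\begin{equation}
dA_t=\frac{\|\nabla_H\Phi\|}{\|\nabla\Phi\|}\,dA^R_t,
\end{equation}
the positivity of the factor ensuring that the two forms are consistently oriented.

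With this relation in hand, I would apply Proposition~\ref{prop: coareaR} not to $f$ but to the nonnegative function $\tilde f=f\,\|\nabla_H\Phi\|/\|\nabla\Phi\|$. On the left-hand side the weight $\|\nabla\Phi\|$ cancels, leaving $\int_M f\,\|\nabla_H\Phi\|\,dV_\mu$; on the right-hand side the substitution $dA^R_t=(\|\nabla\Phi\|/\|\nabla_H\Phi\|)\,dA_t$ cancels the extra factor, leaving $\int_\R\int_{\Phi^{-1}(t)}f\,dA_t\,dt$. Comparing the two sides gives exactly the claimed formula, and the measurability and integrability bookkeeping are inherited directly from the Riemannian statement.

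The only genuinely delicate point is the area-form comparison: one must verify that the tangential parts of the two normalizing vector fields contribute nothing to the restricted contraction, and correctly identify the normal component of $\nabla_H\Phi$. Everything else reduces to algebraic manipulation of the scalar weights $\|\nabla\Phi\|$ and $\|\nabla_H\Phi\|$, both smooth and nonvanishing on the region where $\nabla_H\Phi\neq0$, so no further regularity issues arise.
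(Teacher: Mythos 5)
Your argument is correct, and it follows the same overall strategy as the paper: both reduce the statement to the Riemannian coarea formula of Proposition~\ref{prop: coareaR} applied to the weighted function $f\left\|\nabla_H\Phi\right\|/\left\|\nabla\Phi\right\|$, and both hinge on the single identity $dA_t=\frac{\left\|\nabla_H\Phi\right\|}{\left\|\nabla\Phi\right\|}\,dA^R_t$. Where you genuinely diverge is in how that identity is established. The paper builds the adapted moving frame $F_1,F_2,N$ and its dual coframe $\phi^1,\phi^2,\eta$, checks that $\eta\wedge\phi^1\wedge\phi^2$ is the Popp volume, expands the Riemannian unit normal as $\rho N+AF_1+BF_2$, and solves for $\rho=\left\|\nabla\Phi\right\|$ to read off both area forms as multiples of $\phi^1\wedge\phi^2$. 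You instead observe that for a top-degree form on a three-manifold, the contraction by any vector tangent to the level set vanishes upon restriction to $T\Phi^{-1}(t)$, so only the component of $\nabla_H\Phi/\left\|\nabla_H\Phi\right\|$ along the Riemannian unit normal contributes, and that component is $\left\langle\nabla_H\Phi,\nabla\Phi\right\rangle/(\left\|\nabla_H\Phi\right\|\left\|\nabla\Phi\right\|)=\left\|\nabla_H\Phi\right\|/\left\|\nabla\Phi\right\|$ because $\nabla\Phi=\nabla_H\Phi+(X_0\Phi)X_0$ with $\nabla_H\Phi\perp X_0$. This is more elementary and coordinate-free: it avoids the frame and coframe entirely, makes the positivity (hence orientation consistency) of the conversion factor transparent, and would generalize verbatim to hypersurfaces in higher-dimensional sub-Riemannian manifolds. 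What the paper's computation buys in exchange is the explicit coframe $\phi^1,\phi^2,\eta$ and the formula $dA_t=h\,\phi^1\wedge\phi^2$, which are reused immediately afterwards in \eqref{eq:SRarea} and in Section~\ref{s:localized}; your shortcut proves the proposition but does not produce that auxiliary machinery.
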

Turning the attention to our problem and taking in account the bases previously defined on $U_\eps$ (Definition~\ref{def: moving frame}), we have that the volume form $dV_{\mu}$ coincides with $\eta\wedge\phi^{1}\wedge\phi^{2}$ up to a smooth density $h:M\to\R$. Then, the sub-Riemannian area form induced by $\mu$ on $U^{t}$ is
\begin{equation}
\label{eq:SRarea}
dA_{t}=\iota\left(N\right)dV_{\mu}=h\cdot\iota\left(N\right)\eta\wedge\phi^{1}\wedge\phi^{2}=h\cdot\phi^{1}\wedge\phi^{2}.
\end{equation}
Finally, applying the sub-Riemannian coarea formula, and from \eqref{eq:gradH=1},
\begin{equation}
\label{eq:misuraUe}
\mu(U_\eps)=\int_{U_\eps}\frac{\|\nabla_H\delta\|}{\|\nabla_H\delta\|}dV_\mu=\int_0^\eps\int_{U^t} dA_tdt.
\end{equation}
\begin{proof}
To obtain the result we have to clarify the relation between the induced Riemannian area form and the sub-Riemannian one on  $\Phi^{-1}(t)$.
As noticed before, the measure $\mu$ is proportional to the measure associated with Popp's volume up to a smooth density, and the same relation is true for the associated measures, volumes and area forms.
Therefore, without lack of generality, we suppose that $dV_\mu$ is the Popp's volume $\omega\wedge\omega^1\wedge\omega^2$.
First of all, let us consider the following moving frame of vector fields that coincides with that of Definition~\ref{def: moving frame} when $\Phi=\delta$:
\begin{gather}
F_{1}=\left\|\nabla_{H}\Phi\right\|^{-2}\left[\left(X_{2}\Phi\right)X_{1}-\left(X_{1}\Phi\right)X_{2}\right],\\
F_{2}=\left(X_{0}\Phi\right)\nabla_{H}\Phi-\left\|\nabla_{H}\Phi\right\|^2X_{0},\\
N=\left\|\nabla_{H}\Phi\right\|^{-2}\nabla_{H}\Phi.
\end{gather}
The corresponding dual basis of 1-forms is written as follows:
\begin{gather}
\phi^{1}=\left(X_{2}\Phi\right)\omega^{1}-\left(X_{1}\Phi\right)\omega^{2},\\
\phi^{2}=-\left\|\nabla_{H}\Phi\right\|^{-2}\omega,\\
\eta=\left(X_{1}\Phi\right)\omega^{1}+\left(X_{2}\Phi\right)\omega^{2}+\left(X_{0}\Phi\right)\omega.
\end{gather}
For a fixed $t\in\mathbb{R}$, the vector fields $F_1$ and $F_2$ are tangent  to the level set $\Phi^{-1}(t)$, while $N$ is transversal. 
The 1-form $\eta$ annihilates the tangent space, while $\phi^1\wedge\phi^2$ is an area form on the surface.
Moreover, we have that
\begin{align}
\eta\wedge\phi^{1}\wedge\phi^{2}\left(X_{0},X_{1},X_{2}\right) & =\eta\left(X_{1}\right)\phi^{1}\left(X_{2}\right)\phi^{2}\left(X_{0}\right)-\eta\left(X_{2}\right)\phi^{1}\left(X_{1}\right)\phi^{2}\left(X_{0}\right)\\
 & =\left \|\nabla_{H}\Phi\right\|^{-2}\left[\left(X_{1}\Phi\right)^{2}+\left(X_{2}\Phi\right)^{2}\right]=1,
\end{align}
and this means that the volume form $\eta\wedge\phi^1\wedge\phi^2$ coincides with the volume form $\omega\wedge\mathrm{d}\omega$ associated with the $\Popp$ measure.
Now, let us consider the Riemannian and sub-Riemannian induced area forms on $\Phi^{-1}(t)$:
 \begin{eqnarray}
 dA^R_t=\iota\left(\frac{\nabla\Phi}{\left\|\nabla\Phi\right\|}\right)d\mu,& \mathrm{and} &
 dA_t=\iota\left(\frac{\nabla_{H}\Phi}{\left\|\nabla_{H}\Phi\right\|}\right)d\mu.
 \end{eqnarray}
To compute those area forms, we need to write the Riemannian
 outer normal in our new basis
\begin{equation}
\frac{\nabla\Phi}{\left\|\nabla\Phi\right\|}=\frac{\nabla_H\Phi+(X_0\Phi)X_0}{\left\|\nabla\Phi\right\|}=\rho N+AF_{1}+BF_{2}.
\end{equation}
We obtain that $A=0$ because $F_1$ is orthogonal to $\nabla\Phi$. 
Moreover, projecting on $X_0$, it holds that
$$B=-\dfrac{1}{\left\|\nabla_{H}\Phi\right\|^2}\dfrac{X_0\Phi}{\left\|\nabla\Phi\right\|}.$$ 
Then, substituting and computing the scalar product with $N$,
\begin{equation}
\dfrac{1}{\left\|\nabla\Phi\right\|}=\dfrac{\rho}{\left\|\nabla_{H}\Phi\right\|^2}-\dfrac{(X_0\Phi)^2}{\left\|\nabla_{H}\Phi\right\|^2\left\|\nabla\Phi\right\|}
\end{equation}
and we conclude that
\begin{equation}
\rho=\frac{\left\|\nabla_{H}\Phi\right\|^2}{\left\|\nabla\Phi\right\|}+\frac{\left(X_0\Phi\right)^2}{\left\|\nabla\Phi\right\|}=\left\|\nabla\Phi\right\|.
\end{equation}
Since we consider $2$-forms on $T\{\Phi^{-1}(t)\}$, we deduce that 
\begin{gather}
dA^R_t=\iota \left(\dfrac{\nabla\Phi}{\|\nabla\Phi\|}\right)dV_\mu=\iota \left(\rho N\right)dV_\mu=\left\|\nabla\Phi\right\|\phi^1\wedge\phi^2,\\
dA_t=\iota \left(\dfrac{\nabla_H\Phi}{\|\nabla_H\Phi\|}\right)dV_\mu=\iota \left(\left\|\nabla_H\Phi\right\| N\right)dV_\mu=\left\|\nabla_H\Phi\right\|\phi^1\wedge\phi^2.
\end{gather}
Finally, applying the coarea Riemannian formula in Proposition~\ref{prop: coareaR},
\begin{equation}
\int_Mf\left\|\nabla_H\Phi\right\|dV_\mu=\int_\mathbb{R}\int_{\{\Phi=t\}}f\frac{\left\|\nabla_H\Phi\right\|}{\left\|\nabla\Phi\right\|}dA^R_tdt=\int_\mathbb{R}\int_{\{\Phi=t\}}fdA_tdt. \qedhere
\end{equation}
\end{proof}

\section{A localized formula for the volume of the half-tube}
\label{s:localized}

The measure of the localized half-tubular neighborhood $U_\eps$ is expressed in  \eqref{eq:misuraUe} as
\begin{equation}
\mu\left(U_{\eps}\right)=\int_{0}^{\eps}\int_{U^{t}}dA_{t}dt,
\end{equation}
where, for $t\in (0,\eps)$, $dA_t$ is the sub-Riemannian area form \eqref{eq:SRarea} induced by the measure $\mu$ on the surface $U^t=G(t,U)$, with $G:(0,\eps)\times U\to U_\eps$ the diffeomorphism of Theorem~\ref{thm:smoothness}. 
One can further transform the previous formula considering the diffeomorphism $G$ at fixed time. Namely, the sub-Riemannian area form is 
\begin{equation}
dA_t\mid_{(t,p)}=h(t,p)\cdot \phi^1\wedge\phi^2\mid_{(t,p)}=h(t,p)\cdot  G_*\left(\phi^1\wedge\phi^2\mid_{ (0,p)}\right),
\end{equation}
and the expression for the measure of $U_\eps$ becomes the following
\begin{equation}
\label{eq: mu(Ue)}
\mu\left(U_{\eps}\right)=\int_{0}^{\eps}\int_{U}\left|\text{det}\left(d_{p}G\vert _{(t,p)}\right)\right|\frac{h(t,p)}{h(0,p)} dA_\mu dt,
\end{equation}
where with $dA_\mu$ we denote $dA_0$, that is the induced sub-Riemannian area form on $U$ by the measure $\mu$ in \eqref{eq:SRarea}.
We deduce that $\mu(U_\eps)$ is smooth at $\eps=0$ and, in order to obtain its asymptotics stated in Theorem~\ref{thm:sviluppo}, we need to compute the Taylor expansion near $t=0$ of the function
\begin{equation}
\label{eq: det}
t\mapsto\left|\text{det}\left(d_{p}G\vert _{(t,p)}\right)\right|h(t,p),
\end{equation} 
for a fixed $p\in U$. 
For this purpose, we express the matrix representing $d_{p}G\vert _{(t,p)}:T_pU\to T_{G(t,p)}U^t$ with respect to some bases as follows: on $T_{G(t,p)}U^t$, we choose $\{F_1,F_2\}|_{U^t}$ of the moving frame of Definition~\ref{def: moving frame}; on $T_pU$ we consider $\{X_S(p),Y_S(p)\}$ introduced in \eqref{eq:XS} and \eqref{eq: YS}. 
Moreover,  we define following vector fields  on $U_\eps$:
\begin{eqnarray}
\label{eq:Vi}
V_{1}(t,p)=dG\left(X_S(p)\right), &V_2(t,p)=dG\left(Y_{S}(p)\right),
\end{eqnarray}
where in the left hand side $(t,p)$ is a shorthand for computing the vector field at $G(t,p)\in U_\eps$.

\begin{proposition}
\label{prop:determinant}
Let $(t,p)\in (0,\eps)\times U$, the matrix representing the linear operator $d_{p}G\vert _{(t,p)}:T_pU\to T_{G(t,p)}U^t$ with respect to the bases $X_S,Y_S$ for $T_pU$ and $F_1,F_2$ for $T_{G(t,p)}U^t$, is
\begin{equation}
\label{eq:matrixdeterminant}
d_{p}G\vert _{(t,p)}=\left(\begin{array}{cc}
-\left\langle V_{1},JN\right\rangle  & -\left\langle V_{2},JN\right\rangle \\
-\left\langle V_{1},X_{0}\right\rangle  & -\left\langle V_{2},X_{0}\right\rangle 
\end{array}\right)(t,p),
\end{equation}
where $N=\nabla_H\delta$, $J$ is the linear map of Definition~\ref{def:J} and $X_0$ is the Reeb vector field of the contact manifold.
 In particular, it holds that 
\begin{equation}
\label{eq:detci}
\mu(U_\eps)=\int_U\int_0^\eps\Big|\left\langle V_1,JN\right\rangle\left\langle V_2,X_0\right\rangle -\left\langle V_2,JN\right\rangle\left\langle V_1,X_0\right\rangle \Big|(t,p)\frac{h(t,p)}{h(0,t)}dtdA_\mu.
\end{equation}
\end{proposition}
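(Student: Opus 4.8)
The plan is to read off the entries of the matrix of $d_pG|_{(t,p)}$ by pairing the image vectors $V_1 = dG(X_S(p))$ and $V_2 = dG(Y_S(p))$ (defined in \eqref{eq:Vi}) with the coframe dual to the moving frame $\{F_1,F_2,N\}$. By definition, the columns of the sought matrix are the coordinates of $V_1$ and $V_2$ in the basis $\{F_1,F_2\}$ of $T_{G(t,p)}U^t$. Since $G(t,\cdot)$ maps $U$ into the level set $U^t = \delta^{-1}(t)$, each $V_i$ is tangent to $U^t$; equivalently, writing $\eta = \langle \nabla\delta,\cdot\rangle$ (which equals $d\delta$ in the coframe $\{\omega^1,\omega^2,\omega\}$), one has $\eta(V_i) = V_i(\delta) = 0$, so $V_i$ has no component along $N = \nabla_H\delta$ and genuinely lies in $\spn\{F_1,F_2\}$. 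First I would verify that $\{\phi^1,\phi^2,\eta\}$ of Definition~\ref{def: moving frame} is the coframe dual to $\{F_1,F_2,N\}$, a direct check using $(X_1\delta)^2+(X_2\delta)^2 = 1$ and $\distr = \ker\omega$. Then the coefficient of $F_1$ (resp.\ $F_2$) in $V_i$ is exactly $\phi^1(V_i)$ (resp.\ $\phi^2(V_i)$), so the matrix is $\big(\phi^j(V_i)\big)_{j,i}$ with $j,i\in\{1,2\}$.

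The key step is to rewrite these pairings intrinsically. Since $\{X_1,X_2,X_0\}$ is orthonormal for the ambient Riemannian metric with dual coframe $\{\omega^1,\omega^2,\omega\}$, the $1$-form $\phi^1 = (X_2\delta)\omega^1 - (X_1\delta)\omega^2$ is precisely the metric dual $\langle F_1,\cdot\rangle$ of $F_1 = (X_2\delta)X_1 - (X_1\delta)X_2$. Moreover, applying $J$ to $N = (X_1\delta)X_1 + (X_2\delta)X_2$ gives $JN = (X_1\delta)X_2 - (X_2\delta)X_1 = -F_1$, whence $\phi^1(V_i) = \langle F_1,V_i\rangle = -\langle JN,V_i\rangle = -\langle V_i,JN\rangle$. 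Likewise $\phi^2 = -\omega = -\langle X_0,\cdot\rangle$, so $\phi^2(V_i) = -\langle V_i,X_0\rangle$. This yields the matrix \eqref{eq:matrixdeterminant}.

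Finally, for \eqref{eq:detci} I would simply expand the $2\times 2$ determinant of \eqref{eq:matrixdeterminant},
\begin{equation}
\det\big(d_pG|_{(t,p)}\big) = \langle V_1,JN\rangle\langle V_2,X_0\rangle - \langle V_2,JN\rangle\langle V_1,X_0\rangle,
\end{equation}
substitute it into the expression \eqref{eq: mu(Ue)} for $\mu(U_\eps)$, and exchange the order of integration by Fubini. The computation is essentially mechanical; the only points requiring care are the identification of the abstractly-defined dual forms $\phi^1,\phi^2$ with the metric contractions $-\langle JN,\cdot\rangle$ and $-\langle X_0,\cdot\rangle$ — which is what converts the coordinate extraction into the intrinsic inner products appearing in the statement — together with the observation that $V_1,V_2$ carry no $N$-component, so that the relevant matrix is genuinely $2\times 2$.
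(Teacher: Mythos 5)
Your proposal is correct and follows essentially the same route as the paper: both proofs express $V_i=dG(X_S)$, $dG(Y_S)$ in the frame $\{F_1,F_2\}$ of $T_{G(t,p)}U^t$ and identify the resulting coefficients with $-\langle V_i,JN\rangle$ and $-\langle V_i,X_0\rangle$, after which \eqref{eq:detci} is just the $2\times2$ determinant inserted into \eqref{eq: mu(Ue)}. The only (harmless, arguably cleaner) variation is in how you extract the coefficients: you pair $V_i$ with the dual coframe $\{\phi^1,\phi^2,\eta\}$, using $\phi^1=\langle F_1,\cdot\rangle=-\langle JN,\cdot\rangle$, $\phi^2=-\omega=-\langle X_0,\cdot\rangle$ and $\eta(V_i)=d\delta(V_i)=0$, whereas the paper computes the orthogonal projections $\langle V_i,F_j\rangle/\|F_j\|^2$ and then uses the tangency relation $\langle V_i,N\rangle=-(X_0\delta)\langle V_i,X_0\rangle$ to cancel the factor $\|F_2\|^2=1+(X_0\delta)^2$.
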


Before giving the proof we compare the vector fields $V_1,V_2,F_1,F_2$, and present a property that is useful also for the incoming computations.

By construction, for every $(t,p)\in U_\eps$, the vector fields $V_1$, $V_2$ are tangent to $U^t$. As already mentioned, the same is true for $F_1,F_2$, however they differ from the couple $V_1$, $V_2$ just introduced,
as the following lemma states.

\begin{lemma}
Let us consider the vector fields $F_1,N$ defined in Definition~\ref{def: moving frame}, and $V_1$, $V_2$ introduced in \eqref{eq:Vi}. For every $(t,p)\in U_\eps$, we have $\left[F_1,N\right]\neq 0$ and 
\begin{equation}
\label{eq:[Vi,N]=0}
\left[V_{1},N\right]=\left[V_{2},N\right]=0.
\end{equation}
\end{lemma}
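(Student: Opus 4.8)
The plan is to exploit the definition of $V_1$ and $V_2$ as pushforwards under the time-$t$ flow of the objects $X_S(p)$ and $Y_S(p)$ living on $U$. The key structural fact is that the map $G(t,p) = \pi \circ e^{t\vec{H}}(\lambda_0(p))$ is, for fixed $p$, the distance-realizing geodesic, whose velocity is exactly $N = \nabla_H\delta$ by Proposition~\ref{prop: covector}. Hence $N$ is the infinitesimal generator of the flow $\Phi_t := G(t,\cdot)$ in the sense that the integral curves $t \mapsto G(t,p)$ are precisely the flow lines of $N$: we have $\frac{\partial}{\partial t}G(t,p) = N(G(t,p))$. This is the observation that will make the bracket identities immediate.

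First I would verify carefully that $N$ generates the flow $t\mapsto G(t,\cdot)$. From $dG(\partial_t) = \nabla_H\delta_S$ in Theorem~\ref{thm:smoothness} together with $\|\nabla_H\delta\|=1$, the $t$-coordinate curves of $G$ are the unit-speed distance-minimizers, and their tangent is $N$. Thus $G$ is the flow of $N$ precomposed with the identification $\{0\}\times U \cong U$. Then $V_1(t,p) = dG_{(t,p)}(X_S(p))$ and $V_2(t,p) = dG_{(t,p)}(Y_S(p))$ are the pushforwards of the fixed $U$-vectors $X_S(p)$, $Y_S(p)$ along this flow. I would then recall the standard Lie-theoretic fact: if $V$ is obtained by pushing a vector forward along the flow of $N$, then $V$ is flow-invariant, which is equivalent to the vanishing of the Lie bracket, $\mathcal{L}_N V = [N,V] = 0$. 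Applying this to both $V_1$ and $V_2$ yields $[V_1,N]=[V_2,N]=0$, establishing \eqref{eq:[Vi,N]=0}. This is the heart of the argument and I expect it to be essentially a one-line consequence once the flow interpretation is in place.

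For the remaining claim $[F_1,N]\neq 0$, I would argue by contradiction or by direct computation, noting that $F_1 = X_S$ and $N = \nabla_H\delta$ are both horizontal, so their bracket is governed by the structure equations \eqref{eq:structurecoeff}. Since $F_1 = (X_2\delta)X_1 - (X_1\delta)X_2$ and $N = (X_1\delta)X_1 + (X_2\delta)X_2$ form a positively oriented orthonormal horizontal frame (using $\|\nabla_H\delta\|^2=1$ from \eqref{eq:gradH=1}), their bracket must contain a nonzero $X_0$-component. Concretely, the normalization of the contact form gives $\mathrm{d}\omega(N,F_1) = \langle N, JF_1\rangle$, and since $JF_1 = J\big((X_2\delta)X_1-(X_1\delta)X_2\big) = (X_2\delta)X_2+(X_1\delta)X_1 = N$, one computes $\langle N, JF_1\rangle = \langle N,N\rangle = 1 \neq 0$; recalling $[X_2,X_1]=X_0 \bmod \distr$ and the rotation relation, the $X_0$-component of $[F_1,N]$ equals $\pm 1$, so $[F_1,N]\neq 0$.

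The main obstacle, and the step deserving the most care, is making the flow identification rigorous: one must confirm that the variable being differentiated in the pushforward $V_i(t,p)=dG(\cdot)$ is genuinely the same $t$ that parametrizes the flow of $N$, rather than an independent geodesic time. This hinges on the fact from Proposition~\ref{p:GI} that $G(t,x) = \pi\circ e^{t\vec H}(\lambda_0(x))$ uses a single arclength parameter $t$ uniformly across all starting points $x\in U$, so that $G(t+s,p) = G(s, G(t,p)\text{-based reparametrization})$ holds — i.e.\ the semigroup/flow property is inherited from the Hamiltonian flow $e^{t\vec H}$. Once this compatibility is checked, the bracket identity is automatic; the contrast with $F_1$ then underscores that $F_1$, unlike $V_1$, is \emph{not} defined as a flow-invariant pushforward but as an intrinsic frame on each slice $U^t$, so there is no reason for it to commute with $N$, and indeed it does not.
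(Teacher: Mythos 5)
Your argument is correct and is essentially the paper's proof in different clothing: the paper pushes the $t$-independent coordinate fields on $(-\eps,\eps)\times U$ forward through the diffeomorphism $G$ and uses naturality of the Lie bracket with $N=dG(\partial_t)$, which is exactly your flow-invariance statement $\mathcal{L}_N V_i=0$; the flow property you worry about is immediate from $dG(\partial_t)=\nabla_H\delta$ in Theorem~\ref{thm:smoothness}. For $[F_1,N]\neq 0$ the paper simply invokes the bracket-generating (contact) condition for the two independent horizontal fields $F_1,N$, which is what your explicit computation $\omega([F_1,N])=\pm\langle N,JF_1\rangle=\pm 1$ verifies.
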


\begin{proof}
Since $F_1$ and $N$ are independent horizontal vector fields in a three-dimensional contact sub-Riemannian manifold, the H\"ormander condition implies that the Lie bracket $\left[F_1,N\right]$ has to be everywhere nonvanishing.

Now, let us choose in $U$ some coordinates $\left( u_1,u_2\right)$ with corresponding coordinate fields $\left\{\partial_{u_1},\partial_{u_2}\right\}$.
The vector fields $X_S,Y_S$ are tangent to $U$ and can be written as
\begin{eqnarray}
X_S=a_X^1\partial_{u_1}+a_X^2\partial_{u_2},
& Y_S=a_Y^1\partial_{u_1}+a_Y^2\partial_{u_2}.
\end{eqnarray} 
with $a_X^1,a_X^2,a_Y^1,a_Y^2$ smooth functions on $U$.
Moreover, since $G:(0,\eps)\times U\to U_\eps$ a diffeomorphism, we have that, for $i=1,2$, $$\left[dG\left(\partial_t\right),dG\left(\partial_{u_1}\right)\right]=dG\left[\partial_t,\partial_{u_1}\right]=0.$$
Finally, by Theorem~\ref{thm:smoothness}, the horizontal gradient of the sub-Riemannian distance $N=\nabla_H\delta$ is $dG(\partial_t)$. Thus, it holds that
\begin{equation}
\left[V_1,N\right]=dG\left[a_X^1\partial_{u_1}+a_X^2\partial_{u_2},\partial_t\right]=-\partial_t(a_X^1)\partial_{u_1}-\partial_t(a_X^2)\partial_{u_2}=0.
\end{equation}
 An analogous computation proves that $\left[V_2,N\right]=0$.
\end{proof}

\begin{proof}[Proof of Proposition~\ref{prop:determinant}]
To write the matrix that represents $d_{p}G\vert _{(t,p)}$, we need to project
the vector fields $V_1(t,p)$ and $V_2(t,p)$ along the orthonormal basis $\frac{F_1}{\|F_1\|},\frac{F_2}{\|F_2\|}$ in $T_{G(t,p)}U^t$. Then the linear combinations we are looking for are the following
\begin{equation}
V_i=\frac{ \left\langle V_{i},F_1\right\rangle}{\|F_1\|^2}F_1+\frac{ \left\langle V_{i},F_2\right\rangle}{\|F_2\|^2}F_2,\qquad i=1,2,
\end{equation}
and the matrix of our interest is
\begin{equation}
d_{p}G\vert _{(t,p)}=\left(\begin{array}{cc}
\dfrac{ \left\langle V_{1},F_1\right\rangle}{\|F_1\|^2}  & \dfrac{ \left\langle V_{2},F_1\right\rangle}{\|F_1\|^2} \\[0.4cm]
\dfrac{ \left\langle V_{1},F_2\right\rangle}{\|F_2\|^2}  & \dfrac{ \left\langle V_{2},F_2\right\rangle}{\|F_2\|^2} 
\end{array}\right)(t,p).
\end{equation}
Now, recall that $N$ is horizontal and unitary, so that, considering in addition $JN$ and $X_0$ we have an orthonormal frame for $TU_\eps$. 
Therefore, looking at the formula in Definition~\ref{def: moving frame}, we have that in the new basis
\begin{eqnarray}
F_1=-JN & \mbox{and} & F_2=(X_0\delta)N-X_0.
\end{eqnarray}
Hence $\|F_1\|^2=1$ and $\|F_2\|^2=1+(X_0)^2$.
Moreover, due to the fact that the Riemannian gradient of $\delta$, i.e., $\nabla\delta=N+(X_0\delta)X_{0}$, is orthogonal to every surface $U^t=\delta^{-1}(t)$, we have that, for $i=1,2$, $\left\langle V_{i},N\right\rangle =-(X_0\delta)\left\langle V_{i},X_{0}\right\rangle .$
Therefore,
\begin{equation}
\left\langle V_{i},F_2\right\rangle 
 =(X_0\delta)\left\langle V_{i},N\right\rangle -\left\langle V_{i},X_{0}\right\rangle 
 =-\left(1+(X_0\delta)^2\right)\left\langle V_{i},X_{0}\right\rangle. 
\end{equation}
Finally, substituting we conclude the proof. 
\end{proof}

\section{The Taylor expansion. Proof of Theorem~\ref{thm:sviluppo}}
\label{s:expansion}

Let $S$ be a smooth surface bounding a closed region in a three-dimensional sub-Riemannian contact manifold equipped with a smooth measure $\mu$. Let us consider $U\subset S$ an open relatively compact set such that its closure does not contain any characteristic point. 
We recall that \eqref{eq:detci}, in Proposition~\ref{prop:determinant}, expresses the volume of the localized half-tubular neighborhood  $U_\eps$ with respect to $\mu$:
\begin{equation}
\mu(U_\eps)=\int_U\int_0^\eps\Big|\left\langle V_1,JN\right\rangle\left\langle V_2,X_0\right\rangle -\left\langle V_2,JN\right\rangle\left\langle V_1,X_0\right\rangle \Big|(t,p)\frac{h(t,p)}{h(0,t)}dtdA_\mu,
\end{equation}
where $h:U_\eps\to\R$ is the smooth density of $\mu$ with respect the Popp's measure $\Popp$ (i.e., $\mu=h\Popp$), $dA_\mu$ is the induced sub-Riemannian area form on $U$ by $\mu$ (i.e., $dA_0$ in \eqref{eq:SRarea}), $V_1,V_2$ are the vector fields defined in \eqref{eq:Vi}, and $N=\nabla_H\delta$ is the sub-Riemannian gradient of the sub-Riemannian distance from the surface.

In order to prove the statement in Theorem~\ref{thm:sviluppo}, we fix $p\in U$, and we proceed in calculating the Taylor expansion centered in $t=0$ of the smooth function
\begin{equation}
\label{eq:CC}
C(t,p)=| \left\langle V_1,JN\right\rangle\left\langle V_2,X_0\right\rangle -\left\langle V_2,JN\right\rangle\left\langle V_1,X_0\right\rangle |(t,p),
\end{equation}
Therefore, for $ k=1,2,3$ the coefficients of the expansion \eqref{eq:expansion} are given by
\begin{equation}
a_k=\int_U\sum_{\substack{ i,j=0,\dots,k-1\\ i+j=k-1}} \binom{k-1}{i}\partial^{i}_tC\cdot\frac{ \partial^j_t h}{h} (0,p) dA_\mu,
\end{equation} 
from which we obtain the expressions in \eqref{eq:coefficienti} thanks to \eqref{eq:Hm}.

We start the proof observing a useful relation between the terms appearing in \eqref{eq:CC}. Recall that $N=\partial_t$ thanks to Theorem~\ref{thm:smoothness}.
\begin{lemma}
\label{lem:N(V,X0)=dt(V,JN)}
Let us consider the vector fields $V_{i}$ defined in \eqref{eq:Vi}, for $i=1,2$. On the set $U_\eps$ it holds that
\begin{equation}
\partial_t \left\langle V_{i},X_{0}\right\rangle  =N\left\langle V_{i},X_{0}\right\rangle =\left\langle V_{i},JN\right\rangle.
\end{equation}
\end{lemma}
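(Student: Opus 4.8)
The plan is to reduce both equalities to properties of the Tanno connection $\nabla$, exploiting that $N=\partial_t$ and that $[V_i,N]=0$. The first equality $\partial_t\la V_i,X_0\ra=N\la V_i,X_0\ra$ is immediate: by Theorem~\ref{thm:smoothness} we have $N=\nabla_H\delta=dG(\partial_t)=\partial_t$ on $U_\eps$, so differentiating the scalar function $\la V_i,X_0\ra$ along $N$ is literally differentiating it in $t$. The substance of the lemma is therefore the second equality $N\la V_i,X_0\ra=\la V_i,JN\ra$, and the idea is to compute $N\la V_i,X_0\ra$ covariantly using that $\nabla g=0$ and $\nabla X_0=0$ (Definition~\ref{def:Tanno}(i)).

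First I would write, by metric compatibility,
\begin{equation}
N\la V_i,X_0\ra=\la\nabla_N V_i,X_0\ra+\la V_i,\nabla_N X_0\ra=\la\nabla_N V_i,X_0\ra,
\end{equation}
since $\nabla_N X_0=0$. Next I would use the definition of torsion together with the previous lemma. Because $[N,V_i]=[V_i,N]=0$ by \eqref{eq:[Vi,N]=0}, writing $\Tor(N,V_i)=\nabla_N V_i-\nabla_{V_i}N-[N,V_i]$ gives $\nabla_N V_i=\nabla_{V_i}N+\Tor(N,V_i)$. The term $\nabla_{V_i}N$ contributes nothing along $X_0$: indeed $\la\nabla_{V_i}N,X_0\ra=V_i\la N,X_0\ra-\la N,\nabla_{V_i}X_0\ra=0$, the first summand vanishing because $N$ is horizontal and the second because $\nabla X_0=0$. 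Hence $N\la V_i,X_0\ra=\la\Tor(N,V_i),X_0\ra$.

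It remains to identify $\la\Tor(N,V_i),X_0\ra$ with $\la V_i,JN\ra$, and here is where I expect the only real bookkeeping, since $V_i$ is tangent to $U^t$ but \emph{not} horizontal. I would split $V_i=V_i^H+\la V_i,X_0\ra X_0$ into its horizontal and Reeb components. For the horizontal part, both $N$ and $V_i^H$ lie in $\distr$, so Definition~\ref{def:Tanno}(ii) yields $\Tor(N,V_i^H)=-\la N,JV_i^H\ra X_0$, whence $\la\Tor(N,V_i^H),X_0\ra=-\la N,JV_i^H\ra=\la JN,V_i^H\ra$ using that $J$ is skew-adjoint on $\distr$; and since $JN$ is horizontal this equals $\la JN,V_i\ra=\la V_i,JN\ra$. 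For the Reeb part I would note that $\Tor(N,X_0)=-\tau(N)\in\distr$ is horizontal, so $\la\Tor(N,X_0),X_0\ra=0$ and the vertical component of $V_i$ drops out. Combining the two contributions gives $\la\Tor(N,V_i),X_0\ra=\la V_i,JN\ra$, completing the chain of equalities. The main obstacle is precisely this last decomposition: one must use property (ii) of the Tanno torsion only on the genuinely horizontal piece $V_i^H$, and separately verify via the horizontality of $\tau$ that the Reeb component of $V_i$ makes no contribution.
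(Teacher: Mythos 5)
Your proposal is correct and follows essentially the same route as the paper: metric compatibility with $\nabla X_0=0$, the torsion identity combined with $[V_i,N]=0$ to reduce to $\langle\Tor(N,V_i),X_0\rangle$, and the horizontal/Reeb splitting of $V_i$ to evaluate that term via property (ii) of the Tanno connection. The only cosmetic differences are that you kill $\langle\nabla_{V_i}N,X_0\rangle$ using $\nabla X_0=0$ rather than the paper's observation that $\nabla_{V_i}N$ is parallel to $JN$, and you dispose of the Reeb contribution via the horizontality of $\tau(N)$ rather than via $JX_0=0$; both variants are valid.
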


\begin{proof}
We employ the properties of the Tanno connection introduced in Definition~\ref{def:Tanno}.
Recalling property \eqref{eq:[Vi,N]=0} and that $\nabla_XN$ is parallel to $JN$ for every vector field $X$ since $N$ is unitary and horizontal, we deduce that
\begin{equation}
N\left\langle V_{i},X_{0}\right\rangle 
=\left\langle \nabla_N V_i, X_0\right\rangle
=\left\langle \nabla_{V_i}N+[N,V_i]+\Tor(N,V_i), X_0\right\rangle
=\left\langle \Tor(N,V_i), X_0\right\rangle.
\end{equation} 
We split $V_{i}$ in its the horizontal and Reeb component to exploit the properties of the torsion:
\begin{align}
N\left\langle V_{i},X_{0}\right\rangle &=\left\langle \Tor(N,V_i-\langle V_i,X_0\rangle X_0)+\Tor(N,\langle V_i,X_0\rangle X_0), X_0\right\rangle\\
&=-\left\langle N,JV_i-\langle V_i,X_0\rangle JX_0\right\rangle
\end{align}
and we conclude since $JX_0=0$.
\end{proof}

Set $c_i(t,p)= \left\langle V_i,X_{0}\right\rangle.$
Thanks to Lemma~\ref{lem:N(V,X0)=dt(V,JN)}, formula \eqref{eq:CC} can be rewritten as
\begin{equation}
C(t,p)=|\dot{c_{1}}c_{2}-c_{1}\dot{c_{2}}|(t,p),
\label{eq: detconci}
\end{equation}
We recall that $C(t,p)$ comes from the determinant of matrix \eqref{eq:matrixdeterminant}, that for every $p\in U$ at $t=0$ has the following expression:
\begin{equation}
\label{eq: matrixci}
\left(\begin{array}{cc}
-\dot{c_{1}}(0) & -\dot{c_{2}}(0)\\
-c_{1}(0) & -c_{2}(0)
\end{array}\right)
=\left(\begin{array}{cc}
-\left\langle V_{1},JN\right\rangle  & -\left\langle V_{2},JN\right\rangle \\
-\left\langle V_{1},X_{0}\right\rangle  & -\left\langle V_{2},X_{0}\right\rangle 
\end{array}\right)(0,p)
=\left(\begin{array}{cc}
1 & 0\\
0 & 1
\end{array}\right).
\end{equation}
Since $C(0,p)=\left|\dot{c_1}c_2-c_1\dot{c_2}\right|(0,p)=1$, we deduce the first term $a_1$ of the expansion \eqref{eq:expansion} in Theorem~\ref{thm:sviluppo}. Namely
\begin{equation}
a_1=\int_UC(0,p)dA_\mu=\int_UdA_\mu.
\end{equation} 
Again, taking into account the identities in \eqref{eq: matrixci}, we obtain the following expressions for the derivatives of $C$ in $t=0$:
\begin{equation}\label{eq:CCprimo0}
\partial_t C(0,p)=\left(\ddot{c}_{1}c_{2}-c_{1}\ddot{c}_{2}\right)(0,p)=-\ddot{c}_{1}(0,p);
\end{equation}
\begin{equation}
\partial^2_t C(0,p)=\left(c^{(3)}_{1}c_{2}+\ddot{c}_{1}\dot{c}_{2}-\dot{c}_{1}\ddot{c}_{2}-c_{1}c^{(3)}_{2}\right)(0,p)=-c^{(3)}_{1}(0,p)+\ddot{c_{2}}(0,p).\label{eq:CCsecondo0}
\end{equation}
The work now is about to explicit these formulae with respect the sub-Riemannian curvature operators.
In order to do that, we show the relation between the Lie brackets and the canonical Tanno connection when considering the key vector fields for the surface $N$ and $JN$.

\begin{lemma}
\label{lem:b0} We have that at every point of $U_\eps$
\begin{equation}
 \left[JN,N\right]=-\mathcal{H}JN-\left(X_0\delta\right)N+X_{0},
\end{equation} 
where $\mathcal{H}:U_\eps\to \R$ is the sub-Riemannian mean curvature of the surface with respect to the Popp measure $\Popp$ in Definition~\ref{def: H}.
Moreover,
\begin{eqnarray}
\nabla_{JN}N=-\mathcal{H}JN & \mbox{ and }& \nabla_NN=-(X_0\delta) JN.
\end{eqnarray}
\end{lemma}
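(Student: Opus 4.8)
The plan is to split the statement into two independent parts: first establish the Lie bracket identity for $[JN,N]$ by a direct computation in the frame $X_1,X_2,X_0$, and then deduce the two covariant derivative formulas purely formally from the defining properties of the Tanno connection, reading off the coefficients by comparison with the bracket.

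For the bracket I would write $N=\nabla_H\delta=(X_1\delta)X_1+(X_2\delta)X_2$ and, by Definition~\ref{def:J}, $JN=(X_1\delta)X_2-(X_2\delta)X_1$, abbreviating $a=X_1\delta$, $b=X_2\delta$ so that $a^2+b^2=1$ by \eqref{eq:gradH=1}. Expanding $[JN,N]=[-bX_1+aX_2,\,aX_1+bX_2]$ via the Leibniz rule and the structure relations \eqref{eq:structurecoeff}, the second-order bracket terms collapse to $-(a^2+b^2)[X_1,X_2]=c_{12}^1X_1+c_{12}^2X_2+X_0$, which already produces the Reeb component $X_0$. The remaining first-order terms are combinations of $X_1a,X_1b,X_2a,X_2b$ and the $c_{12}^k$. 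Two simplifications then close the computation: differentiating $a^2+b^2=1$ yields $a(X_ia)+b(X_ib)=0$ for $i=1,2$, and applying the commutator to $\delta$, namely $X_1b-X_2a=[X_1,X_2]\delta=-c_{12}^1a-c_{12}^2b-X_0\delta$, is precisely what injects the term $X_0\delta$ into the horizontal part. Collecting the $X_1$ and $X_2$ coefficients and comparing them with the coordinate expression of $\mathcal{H}$ from Lemma~\ref{lem: H}, one recognizes them as $\mathcal{H}b-(X_0\delta)a$ and $-\mathcal{H}a-(X_0\delta)b$, that is, the components of $-\mathcal{H}JN-(X_0\delta)N$, giving the bracket identity.

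For the second part, I would first observe that $\nabla_XN$ is always a multiple of $JN$: it is orthogonal to $N$ since $\langle\nabla_XN,N\rangle=\tfrac12 X\langle N,N\rangle=0$, and orthogonal to $X_0$ since $\langle\nabla_XN,X_0\rangle=X\langle N,X_0\rangle-\langle N,\nabla_XX_0\rangle=0$ using $\nabla X_0=0$. Hence $\nabla_{JN}N=\alpha JN$ and $\nabla_NN=\beta JN$ for scalars $\alpha,\beta$. Using $\nabla_XJY=J\nabla_XY$ and $J^2=-\mathrm{id}$ on $\distr$ one gets $\nabla_NJN=-\beta N$, so the torsion identity $\Tor(JN,N)=\nabla_{JN}N-\nabla_NJN-[JN,N]$ together with $\Tor(JN,N)=-\langle JN,JN\rangle X_0=-X_0$ from Definition~\ref{def:Tanno}(ii) gives $[JN,N]=\alpha JN+\beta N+X_0$. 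Matching this against the bracket identity of the first part in the frame $\{N,JN,X_0\}$ forces $\alpha=-\mathcal{H}$ and $\beta=-(X_0\delta)$, which are exactly the two stated formulas.

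The main obstacle is the bookkeeping in the first part: one must track the first-order coefficients carefully and use both the normalization $a^2+b^2=1$ and the commutator $[X_1,X_2]\delta$ to see the horizontal components reorganize into $-\mathcal{H}JN-(X_0\delta)N$. Once the bracket is in hand, the second part is a short formal manipulation requiring no further computation.
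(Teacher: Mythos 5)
Your proposal is correct and follows essentially the same route as the paper: the paper also first computes $[JN,N]$ in the frame $\{N,JN,X_0\}$ (it merely packages the Leibniz expansion through the rotation Lemma~\ref{lem:rotation}, writing $N,JN$ as a $\theta$-rotation of $X_1,X_2$, and uses the same two identities — the differentiated eikonal relation and $[X_1,X_2]\delta$ — to identify the coefficients with $-\mathcal{H}$ and $-(X_0\delta)$ via Lemma~\ref{lem: H}), and then deduces the covariant derivative formulas from $\Tor(JN,N)=-X_0$ and the fact that $\nabla_X N$ is proportional to $JN$, exactly as you do.
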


\begin{proof}
Exploiting \eqref{eq:gradH=1}, we can express the vector fields $N$ and $JN$ as a rotation of $X_1,X_2$: 
\begin{equation}
\left(\begin{array}{c}
N\\
JN
\end{array}\right)=\left(\begin{array}{cc}
X_{1}\delta & X_{2}\delta\\
-X_{2}\delta & X_{1}\delta
\end{array}\right)\left(\begin{array}{c}
X_{1}\\
X_{2}
\end{array}\right)=\left(\begin{array}{cc}
\text{\ensuremath{\cos\theta}} & -\sin\theta\\
\sin\theta & \cos\theta
\end{array}\right)\left(\begin{array}{c}
X_{1}\\
X_{2}
\end{array}\right).
\end{equation}
Therefore, by Lemma~\ref{lem:rotation},
\begin{align}
\left\langle \left[JN,N\right],JN\right\rangle 
 & =\left\langle \left[X_{2},X_{1}\right]-X_{1}\left(\theta\right)X_{1}-X_{2}\left(\theta\right)X_{2},\sin\theta X_{1}+\text{\ensuremath{\cos\theta}}X_{2}\right\rangle \\
 & =\sin\theta c_{12}^{1}+\text{\ensuremath{\cos\theta}}c_{12}^{2}-\sin\theta X_{1}\left(\theta\right)-\text{\ensuremath{\cos\theta}}X_{2}\left(\theta\right)\\
 & =-\left(X_{2}\delta\right)c_{12}^{1}+\left(X_{1}\delta\right)c_{12}^{2}+X_{1}X_{1}\delta+X_{2}X_{2}\delta,
\end{align}
that is exactly the expression of $-\mathcal{H}$ in coordinates, see Lemma~\ref{lem: H}.
With similar computations we obtain that
\begin{align}
\left\langle \left[JN,N\right],N\right\rangle 
 & =\left\langle \left[X_{2},X_{1}\right]-X_{1}\left(\theta\right)X_{1}-X_{2}\left(\theta\right)X_{2},\cos\theta X_{1}-\sin\theta X_{2}\right\rangle \\
 & =\left(X_{1}\delta\right)c_{12}^{1}+\left(X_{2}\delta\right)c_{12}^{2}+X_{1}X_{2}\delta-X_{2}X_{1}\delta\\
 &=\left(X_{1}\delta\right)c_{12}^{1}+\left(X_{2}\delta\right)c_{12}^{2}-[X_{2},X_{1}]\delta=-(X_0\delta).
 \end{align}
 
 Finally, since $\Tor\left(JN,N\right)=-X_{0}$, we have that
\begin{equation}
\left[JN,N\right]  =\nabla_{JN}N-\nabla_{N}JN+X_{0},
\end{equation}
and thus we conclude from the fact that $N$ and $JN$ are unitary. In particular,
 \begin{equation}
 \nabla_{JN}N=-\mathcal{H}JN,\qquad  -\nabla_NJN=-J\nabla_NN=-(X_0\delta)N. \qedhere
 \end{equation}
\end{proof}

The next two propositions concern the computations of the elements to obtain the second and third term in the Taylor expansion of Theorem~\ref{thm:sviluppo}. 

\begin{proposition}
\label{prop: Cprimo0}
For every $p\in U$, it holds that 
\begin{equation}
\partial_t{C}(0,p)=\left\langle \left[JN,N\right],JN\right\rangle (0,p) =-\mathcal{H}(p)
\end{equation} 
where $\mathcal{H}:U_\eps\to\mathbb{R}$ is the mean sub-Riemannian curvature of the surface with respect to the Popp measure $\Popp$ defined in Definition~\ref{def: H}.
\end{proposition}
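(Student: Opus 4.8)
The plan is to compute $\partial_t C(0,p)$ starting from the identity \eqref{eq:CCprimo0}, which already tells us that $\partial_t C(0,p) = -\ddot{c}_1(0,p)$, where $c_1(t,p) = \langle V_1, X_0\rangle(t,p)$. The strategy is therefore to relate the second $t$-derivative of $c_1$ to the geometry encoded in the Lie bracket $[JN,N]$, and then invoke Lemma~\ref{lem:b0} to recognize $\mathcal{H}$. Since $N = \partial_t$ by Theorem~\ref{thm:smoothness}, I would write $\ddot{c}_1 = N(N\langle V_1, X_0\rangle)$ and apply Lemma~\ref{lem:N(V,X0)=dt(V,JN)} once to get $\dot{c}_1 = N\langle V_1, X_0\rangle = \langle V_1, JN\rangle$, so that $\ddot{c}_1 = N\langle V_1, JN\rangle$.

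The heart of the computation is to evaluate $N\langle V_1, JN\rangle$ at $t=0$ and show it equals $\mathcal{H}(p)$. First I would expand this using compatibility of the Tanno connection with the metric: $N\langle V_1, JN\rangle = \langle \nabla_N V_1, JN\rangle + \langle V_1, \nabla_N JN\rangle$. The key simplifications come from the relations established earlier: the bracket relation \eqref{eq:[Vi,N]=0} gives $[N,V_1]=0$, which lets me trade $\nabla_N V_1$ for $\nabla_{V_1} N + \Tor(N,V_1)$; and Lemma~\ref{lem:b0} supplies $\nabla_N JN = J\nabla_N N = J(-(X_0\delta)JN) = (X_0\delta)N$ together with the full bracket expression for $[JN,N]$. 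The cleanest route, however, is to recognize directly from Lemma~\ref{lem:b0} that $\langle [JN,N], JN\rangle = -\mathcal{H}$, and to connect $\ddot{c}_1 = N\langle V_1,JN\rangle$ at $t=0$ to this bracket pairing using that at $t=0$ the matrix \eqref{eq: matrixci} forces $V_1(0,p) = JN$ up to the normalization $\langle V_1, JN\rangle(0,p)=1$ and $\langle V_1, X_0\rangle(0,p)=0$.

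Concretely, I would argue as follows. Since $[V_1,N]=0$, the vector field $V_1$ is $G$-related to a fixed tangent direction, and at $t=0$ we have $V_1 = X_S = JN$ restricted to $U$ in the moving frame, consistent with \eqref{eq: matrixci}. Then
\begin{equation}
\ddot{c}_1(0,p) = N\langle V_1, JN\rangle(0,p) = \langle \nabla_N V_1, JN\rangle + \langle V_1, \nabla_N JN\rangle.
\end{equation}
Using $\nabla_N V_1 = \nabla_{V_1} N + [N,V_1] + \Tor(N,V_1) = \nabla_{V_1} N + \Tor(N,V_1)$, that $\nabla_{V_1}N$ is parallel to $JN$, and that $\Tor(N,V_1)$ has no $JN$-component by the torsion axiom (ii) of Definition~\ref{def:Tanno}, together with $\nabla_N JN = (X_0\delta)N$ which is orthogonal to $V_1=JN$ at $t=0$, the surviving term reproduces $\langle[JN,N],JN\rangle = -\mathcal{H}(p)$ via Lemma~\ref{lem:b0}. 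Hence $\partial_t C(0,p) = -\ddot{c}_1(0,p) = \langle[JN,N],JN\rangle(0,p) = -\mathcal{H}(p)$.

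The main obstacle I anticipate is bookkeeping the precise identification of $V_1$ with $JN$ (equivalently $X_S = F_1 = -JN$ in the respective frames) at $t=0$ and making sure the derivative $N\langle V_1, JN\rangle$ is genuinely the bracket pairing $\langle [JN,N],JN\rangle$ rather than merely agreeing numerically. The cleanest way to dispose of this is to observe that $[V_1, N] = 0$ lets me replace $N\langle V_1, JN\rangle$ by $N\langle V_1, JN\rangle - \langle [N,V_1], JN\rangle$ and thereby express everything in terms of $\nabla$ of the fixed frame fields $N, JN$, at which point Lemma~\ref{lem:b0} applies verbatim. The remaining care is simply to confirm that the normalization from \eqref{eq: matrixci} kills all boundary contributions that do not already sit inside $\langle[JN,N],JN\rangle$.
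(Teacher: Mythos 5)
Your overall route is the same as the paper's: reduce to $\partial_t C(0,p)=-\ddot c_1(0,p)$ with $\dot c_1=\langle V_1,JN\rangle$, expand $N\langle V_1,JN\rangle$ by metric compatibility of the Tanno connection, trade $\nabla_N V_1$ for $\nabla_{V_1}N+\Tor(N,V_1)$ via $[N,V_1]=0$, evaluate tensorially at $t=0$, and invoke Lemma~\ref{lem:b0}. That is exactly the published argument, so there is no methodological novelty to compare.

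There is, however, one concrete error: the identification $V_1(0,p)=JN$. By \eqref{eq:Vi} and $G(0,\cdot)=\mathrm{id}$ one has $V_1(0,p)=X_S(p)$, and from \eqref{eq:XS} together with $J(X_1)=X_2$, $J(X_2)=-X_1$ it follows that $X_S=-JN$, consistently with the matrix \eqref{eq: matrixci}, which forces $\langle V_1,JN\rangle(0,p)=-1$ (not $+1$, as you assert). This sign is not cosmetic: with your identification the surviving term is $\langle\nabla_{JN}N,JN\rangle=-\mathcal{H}$, hence $\ddot c_1(0,p)=-\mathcal{H}$ and $\partial_t C(0,p)=-\ddot c_1(0,p)=+\mathcal{H}$, the opposite of the claim; your final chain $-\ddot c_1=\langle[JN,N],JN\rangle$ then contradicts your own computation of $\ddot c_1$. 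With the correct $V_1(0,p)=-JN$ the surviving term is $\langle\nabla_{-JN}N,JN\rangle=+\mathcal{H}$, so $\ddot c_1(0,p)=\mathcal{H}$ and $\partial_t C(0,p)=-\mathcal{H}=\langle[JN,N],JN\rangle(0,p)$ as desired. You flagged this bookkeeping as the main obstacle, and it is indeed the one place where the writeup, as it stands, fails; everything else (the vanishing of $\langle V_1,\nabla_N JN\rangle$ since $\nabla_N JN=(X_0\delta)N\perp JN$, the vanishing of the $JN$-component of $\Tor(N,V_1)$ at $t=0$ by axiom (ii), and the tensoriality allowing pointwise substitution of $V_1(0,p)$) is sound.
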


\begin{proof}
Considering equation \eqref{eq:CCprimo0} and recalling that the Tanno connection is metric, we have 
\begin{equation}
\partial_t{C}(t,p)=-\partial_t\left\langle V_1,JN\right\rangle =-\left\langle \nabla_{N}V_{1},JN\right\rangle -\left\langle V_{1},\nabla_{N}JN\right\rangle .
\end{equation}
Evaluating at $t=0$ the second addendum vanishes being $V_1(0,p)=-JN(0,p)$.
For what concerns the first addendum, the idea is to get rid of $V_1$ applying the definition of the torsion operator and property \eqref{eq:[Vi,N]=0}. Hence we have that
\begin{align}
-\left\langle \nabla_{N}V_{1},JN\right\rangle (0,p) & =-\left\langle \nabla_{V_{1}}N+\left[N,V_{1}\right]+\Tor\left(N,V_{1}\right),JN\right\rangle (0,p) \\
 & =\left\langle \nabla_{JN}N+\Tor\left(N,JN\right),JN\right\rangle (0,p).
\end{align}
We conclude thanks to Lemma~\ref{lem:b0} and the fact that $\Tor\left(N,JN\right)=X_0$.
\end{proof}

 \begin{proposition}
 Given $p\in U$, it holds that
 \begin{equation}
 \label{eq:lunga}
 \partial^2_t{C}(0,p)=\left[ X_{S}(X_0\delta)-\left(X_0\delta\right)^{2}-\left\langle R(JN,N)N,JN\right\rangle -\left\langle \nabla_{X_{0}}N,JN\right\rangle \right](0,p).
 \end{equation}
 Moreover, considering the mean curvature $\mathcal{H}:U_\eps\to \R$ of Definition~\ref{def: H}, the previous formula becomes 
 \begin{equation}
 \label{eq:CCorta}
 \partial_t^2C(0,p)=\mathcal{H}^{2}-N(\mathcal{H}).
\end{equation}  
 \end{proposition}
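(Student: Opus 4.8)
The plan is to evaluate the two derivatives in \eqref{eq:CCsecondo0}, namely $\partial_t^2 C(0,p) = -c_1^{(3)}(0,p) + \ddot c_2(0,p)$ with $c_i = \langle V_i, X_0\rangle$, recognize the outcome as the invariant expression \eqref{eq:lunga}, and then recast it as \eqref{eq:CCorta}. I will use $N = \partial_t$ and $\dot c_i = \langle V_i, JN\rangle$ from Lemma~\ref{lem:N(V,X0)=dt(V,JN)}, the commutation $[N, V_i] = 0$ of \eqref{eq:[Vi,N]=0}, the Tanno properties $\nabla X_0 = 0$ and metricity, and Lemma~\ref{lem:b0}, whence $\nabla_N N = -(X_0\delta)JN$ and $\nabla_N JN = J\nabla_N N = (X_0\delta)N$. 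Expanding $\ddot c_i = N\langle V_i, JN\rangle$ by metricity, writing $\nabla_N V_i = \nabla_{V_i}N + \Tor(N, V_i)$ (legitimate since $[N,V_i]=0$), and using $\langle V_i, N\rangle = -(X_0\delta)c_i$ from the proof of Proposition~\ref{prop:determinant} gives the master formula
\[
\ddot c_i = \langle \nabla_{V_i}N, JN\rangle + \langle \Tor(N, V_i), JN\rangle - (X_0\delta)^2 c_i.
\]
Evaluating this at $t=0$ with $V_2 = Y_S = (X_0\delta)N - X_0$ and $c_2(0) = -1$, and using $\Tor(N,N)=0$, $\Tor(N,X_0)=-\Tor(X_0,N)$, the $(X_0\delta)^2$ contributions cancel and $\ddot c_2(0) = -\langle \nabla_{X_0}N, JN\rangle + \langle \Tor(X_0, N), JN\rangle$.

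Next I differentiate the master formula once more along $N$ to reach $c_1^{(3)}(0)$, treating its three summands separately. In $N\langle \nabla_{V_1}N, JN\rangle$ the curvature enters through $\nabla_N\nabla_{V_1}N = \nabla_{V_1}\nabla_N N + R(N,V_1)N$ (again by $[N,V_1]=0$); differentiating $\nabla_N N = -(X_0\delta)JN$ along $V_1$ and evaluating at $t=0$, where $V_1 = X_S = -JN$, yields $-X_S(X_0\delta) + \langle R(JN,N)N, JN\rangle$. For the torsion summand I decompose $V_1 = \alpha F_1 + \beta F_2$, so that $\Tor(N, V_1) = -\alpha X_0 + \beta\,\Tor(X_0, N)$ and hence $\langle \Tor(N, V_1), JN\rangle = \beta\langle \Tor(X_0, N), JN\rangle$; the crucial point is that $\beta$ is the $F_2$-component of $V_1$, which by the proof of Proposition~\ref{prop:determinant} equals $-c_1$. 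Thus $\beta(0) = 0$ but $N\beta|_0 = -\dot c_1(0) = 1$, so this summand contributes exactly $\langle \Tor(X_0, N), JN\rangle$ at $t=0$. Finally $-N((X_0\delta)^2 c_1)|_0 = (X_0\delta)^2$ since $c_1(0)=0$ and $\dot c_1(0)=-1$. Inserting $c_1^{(3)}(0)$ and $\ddot c_2(0)$ into \eqref{eq:CCsecondo0}, the two copies of $\langle \Tor(X_0,N), JN\rangle$ cancel and \eqref{eq:lunga} follows.

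It remains to check that the right-hand side of \eqref{eq:lunga} equals $\mathcal{H}^2 - N(\mathcal{H})$. Since $\mathcal{H} = -\langle \nabla_{JN}N, JN\rangle$ by Lemma~\ref{lem:b0}, I compute $N(\mathcal{H})$ by differentiating, inserting the curvature through $\nabla_N\nabla_{JN}N = \nabla_{JN}\nabla_N N + R(N,JN)N + \nabla_{[N,JN]}N$ and evaluating every term with Lemma~\ref{lem:b0}, in particular $[N,JN] = \mathcal{H}JN + (X_0\delta)N - X_0$ and $\nabla_{JN}JN = J\nabla_{JN}N = \mathcal{H}N$. This gives $N(\mathcal{H}) = JN(X_0\delta) + \langle R(JN,N)N, JN\rangle + \mathcal{H}^2 + (X_0\delta)^2 + \langle \nabla_{X_0}N, JN\rangle$; subtracting from $\mathcal{H}^2$ and using $JN(X_0\delta) = -X_S(X_0\delta)$ on $U$ (where $X_S = -JN$) reproduces \eqref{eq:lunga} exactly, establishing \eqref{eq:CCorta}.

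The step I expect to be most delicate is the torsion bookkeeping in $c_1^{(3)}(0)$: one must recognize that the $F_2$-component of $V_1$ is precisely $-c_1$, so that although $\Tor(N, V_1)$ is a multiple of $X_0$ at $t=0$, its $N$-derivative reinstates a torsion term that is indispensable for the cancellation against the corresponding term in $\ddot c_2(0)$. The curvature insertions are mechanical once one commits to the Tanno connection, but matching the signs and normalizations of the torsion contributions between the two derivatives is where errors are most likely to creep in.
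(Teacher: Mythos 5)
Your proposal is correct and follows essentially the same route as the paper: both compute $c_1^{(3)}(0)$ and $\ddot c_2(0)$ separately via the Tanno connection, insert the curvature through $\nabla_N\nabla_{V_1}N=\nabla_{V_1}\nabla_NN+R(N,V_1)N$ using $[N,V_1]=0$, handle the torsion term by tracking the component of $V_1$ transverse to $JN$ (you use the $F_1,F_2$ frame where the paper uses $N,JN,X_0$, which is equivalent since the $F_2$-coefficient is $-c_1$), and then identify \eqref{eq:lunga} with $\mathcal{H}^2-N(\mathcal{H})$ via Lemma~\ref{lem:b0}. The only cosmetic omission is that in differentiating $\langle\nabla_{V_1}N,JN\rangle$ you should note that the extra term $\langle\nabla_{V_1}N,\nabla_NJN\rangle$ vanishes because $\nabla_{V_1}N\perp N$ while $\nabla_NJN=(X_0\delta)N$.
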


\begin{proof}
To obtain \eqref{eq:lunga} we compute $c^{(3)}_{1}(0,p)$ and $\ddot{c}_{2}(0,p)$ separately, being $\ddot{C}(0,p)$ expressed as their difference in \eqref{eq:CCsecondo0}.

\textit{Step 1.} We compute $c^{(3)}_{1}(0,p)$. 
Recall that $\dot{c}_1(t,p)=\left\langle V_1,JN\right\rangle (t,p)$, consequently:
\begin{equation}
c^{(3)}_{1}(t,p) =N^{2}\left\langle V_{1},JN\right\rangle = \left\langle \nabla_{N}\nabla_{N}V_{1},JN\right\rangle +2\left\langle \nabla_{N}V_{1},\nabla_{N}JN\right\rangle +\left\langle V_{1},\nabla_{N}\nabla_{N}JN\right\rangle.
\end{equation}
Computing the second and the third addendum in $t=0$, we have respectively:
\begin{align}
\left\langle \nabla_{N}V_{1},\nabla_{N}JN\right\rangle & =\left\langle \nabla_{V_{1}}N+\left[N,V_{1}\right]+\Tor\left(N,V_{1}\right),\nabla_{N}JN\right\rangle \\
 & =\left\langle -\nabla_{JN}N+\Tor\left(N,-JN\right),\nabla_{N}JN\right\rangle \\
 & =\left\langle -\nabla_{JN}N-X_0,\nabla_{N}JN\right\rangle =0,
\end{align}
because $\nabla_{JN}N,\nabla_{N}JN$ are orthogonal to each other, and then
\begin{align}
\left\langle V_{1},\nabla_{N}\nabla_{N}JN\right\rangle  & =-\left\langle JN,\nabla_{N}\nabla_{N}JN\right\rangle \\
 & =-N\left\langle JN,\nabla_{N}JN\right\rangle +\left\langle \nabla_{N}JN,\nabla_{N}JN\right\rangle \\
 & =\left\langle \nabla_{N}N,\nabla_{N}N\right\rangle \\
 & =\left(X_0\delta\right)^2 .
\end{align}
Therefore, $c^{(3)}_{1}(0,p)=\left[ \left\langle \nabla_{N}\nabla_{N}V_{1},JN\right\rangle +\left(X_0\delta\right)^2\right] (0,p)$.

\textit{Step 2.} We compute $\ddot{c}_{2}(t,p)$. Since $\dot{c}_2(t,p)=\left\langle V_2,JN \right\rangle (t,p)$, it holds that
\begin{equation}
\ddot{c}_{2}(t,p)=N\left\langle V_{2},JN\right\rangle   = \left\langle \nabla_{N}V_{2},JN\right\rangle +\left\langle V_{2},\nabla_{N}JN\right\rangle .
\end{equation}
Evaluating each addendum in $t=0$, where $V_2(0,p)=\left( X_0 \delta \right)N-X_0$, applying \eqref{eq:[Vi,N]=0} and Lemma~\ref{lem:b0}, we deduce that
\begin{align}
\left\langle \nabla_{N}V_{2},JN\right\rangle & =\left\langle \nabla_{V_{2}}N+\left[N,V_{2}\right]+\Tor\left(N,V_{2}\right),JN\right\rangle \\
 & =\left\langle \nabla_{\left( X_0 \delta \right)N-X_{0}}N+\Tor\left(N,\left( X_0 \delta \right)N-X_{0}\right),JN\right\rangle \\
 & =\left( X_0 \delta \right)\left\langle \nabla_{N}N,JN\right\rangle -\left\langle \nabla_{X_{0}}N+\Tor\left(N,X_{0}\right),JN\right\rangle  \\
 & =-\left( X_0 \delta \right)^2 +\left\langle \left[N,X_{0}\right],JN\right\rangle 
\end{align}
 and 
\begin{equation}
\left\langle V_{2},\nabla_{N}JN\right\rangle  =\left\langle \left( X_0 \delta \right)N-X_{0},\nabla_{N}JN\right\rangle =\left( X_0 \delta \right)^2.
\end{equation}
Putting together, we have that  $\ddot{c}_{2}(0,p)=\left\langle \left[N,X_{0}\right],JN\right\rangle (0,p)$. 

\textit{Step 3.} At this point we have the following intermediate expression for $\partial^2_t{C}(0,p)$:
\begin{equation}
\partial^2_t{C}(0,p)=\left[-\left\langle \nabla_{N}\nabla_{N}V_{1},JN\right\rangle -\left( X_0 \delta \right)^2 +\left\langle \left[N,X_{0}\right],JN\right\rangle \right](0,p).
\end{equation}

Let us focus on the term still depending on the Jacobi field $V_{1}$. In order to get rid of $V_{1}$ we need to use the curvature operator associated with the connection $\nabla$, i.e., $R(X;Y)Z=\nabla_X\nabla_YZ-\nabla_Y\nabla_XZ-\nabla_{[X,Y]}Z$. Then, applying \eqref{eq:[Vi,N]=0} and Lemma~\ref{lem:b0}, we have:
\begin{align}
-\left\langle \nabla_{N}\nabla_{N}V_{1},JN\right\rangle &= -\left\langle \nabla_{N}\nabla_{V_{1}}N+\nabla_{N}\Tor\left(N,V_{1}\right),JN\right\rangle \\
&= -\left\langle \nabla_{V_{1}}\nabla_{N}N+R(N,V_{1})N + \nabla_{\left[N,V_{1}\right]}N+\nabla_{N}\Tor\left(N,V_{1}\right),JN\right\rangle \\
&= \left\langle V_{1}(X_0\delta)JN+R(V_{1},N)N ,JN\right\rangle-\left\langle \nabla_{N}\Tor\left(N,V_{1}\right),JN\right\rangle .
\end{align}
Since the first scalar product is tensorial in the arguments in which $V_{1}$ appears, evaluating in $t=0$ we have: 
\begin{equation}
-\left\langle \nabla_{N}\nabla_{N}V_{1},JN\right\rangle =-JN(X_0\delta)-\left\langle R(JN,N)N ,JN\right\rangle-\left\langle \nabla_{N}\Tor\left(N,V_{1}\right),JN\right\rangle.
\end{equation}
At this point, looking at the scalar product involving the torsion term, we decompose $V_{1}$ along the orthonormal frame $N$, $JN$, $X_0$, i.e.,
\begin{align}
\Tor\left(N,V_{1}\right)&=\Tor\left(N,\langle V_{1},N\rangle N+\langle V_{1},JN\rangle JN+\langle V_{1},X_0\rangle X_0\right)\\
&=\langle V_{1},JN\rangle X_0 +\langle V_{1},X_0\rangle\Tor(N,X_0).
\end{align}
Evaluating at $t=0$, we get $\langle V_{1},X_0\rangle=0$ and we obtain that
\begin{align}
-\left\langle \nabla_{N}\Tor\left(N,V_{1}\right),JN \right\rangle 
 &= -\left\langle N\left\langle V_{1},JN\right\rangle X_{0}+N\left\langle V_{1},X_{0}\right\rangle \Tor\left(N,X_{0}\right),JN\right\rangle \\
 &= -\left\langle \left(V_{1},JN\right)\Tor\left(N,X_{0}\right),JN\right\rangle \\
 &= \left\langle \Tor\left(N,X_{0}\right),JN\right\rangle .
\end{align}
Since on $U$ we have that the characteristic vector field $X_S=-JN$, we get 
\begin{equation}
-\left\langle \nabla_N \nabla_NV_1,JN \right\rangle (0,p)=\left[X_S\left( X_0\delta\right) + \left\langle -R(JN,N)N+ \Tor\left(N,X_{0}\right),JN \right\rangle \right](0,p).
\end{equation}
Finally:
\begin{align}
\partial^2_t{C}(0,p)&=-\left\langle \nabla_{N}\nabla_{N}V_{1},JN\right\rangle -\left( X_0 \delta \right)^2 +\left\langle \left[N,X_{0}\right],JN\right\rangle \\
 &= X_{S}(X_0\delta)-\left(X_0\delta\right)^{2}+\left\langle -R(JN,N)N+\Tor\left(N,X_{0}\right)+\left[N,X_{0}\right],JN\right\rangle
\\ &= X_{S}(X_0\delta)-\left(X_0\delta\right)^{2}-\left\langle R(JN,N)N,JN\right\rangle -\left\langle \nabla_{X_{0}}N,JN\right\rangle .
\end{align}

\textit{Step 4.} The last part of the proof is to obtain \eqref{eq:CCorta}.
Exploiting Lemma~\ref{lem:b0}, we deduce that 
\begin{align}
-\left\langle R\left(JN,N\right)N,JN\right\rangle &= \left\langle -\nabla_{JN}\nabla_{N}N+\nabla_{N}\nabla_{JN}N+\nabla_{\left[JN,N\right]}N,JN\right\rangle \\
&= JN(X_0\delta) -N(\mathcal{H}) +\left\langle\nabla_{-\mathcal{H}JN-(X_0\delta)N+X_0}N,JN\right\rangle\\
&= -X_{S}\left(X_0\delta\right)-N\left(\mathcal{H}\right)+\mathcal{H}^{2}+\left(X_0\delta\right)^{2}+\left\langle \nabla_{X_{0}}N,JN\right\rangle.
\end{align}
Substituting in \eqref{eq:lunga}, we obtain the statement.
\end{proof}

\section{An operative formula. Proof of Proposition~\ref{prop:a3f}}
\label{s:operative}

In this Section we prove Proposition~\ref{prop:a3f}, where $\mu=\Popp$ is the Popp measure. In other words, we prove that one can compute the coefficients in expansion~\eqref{eq:expansion} of Theorem~\ref{thm:sviluppo}
only in terms of a smooth function $f$ locally defining the surface $S=f^{-1}(0)$, without the explicit knowledge of the sub-Riemannian distance function $\delta$. 

This property is true for the coefficients $a_{1}$ and $a_{2}$ appearing in the in expansion~\eqref{eq:expansion}, as it is evident from their explicit expression. Indeed, as noticed in Remark~\ref{rem:H}, the mean sub-Riemannian curvature $\mathcal{H}$, is well defined on the surface by \eqref{eq:HU}. This is not a priori clear for the coefficient $a_3$, since it depends on a derivative of $\mathcal{H}$ along $N$, which is a vector field transversal to the surface. Hence it might depend on the value of $\mathcal{H}$ outside $S$.

Before  proving Proposition~\ref{prop:a3f}, we recall the two local metric invariants $\chi$ and $\kappa$, that are smooth functions on $M$ characterizing a three-dimensional contact manifold.  
These quantities are strictly related to the curvature operator associated with the Tanno connection $\nabla$. 

%

The first metric functional invariant $\kappa$ of a three-dimensional sub-Riemannian contact manifold is the Tanno sectional curvature of the distribution, i.e., the smooth function defined as
 \begin{equation}
\kappa=\langle R(X_1,X_2)X_2,X_1\rangle,
\end{equation}
where $X_1,X_2$ is a local orthonormal frame for $\distr$.
Denoting by $\tau:\distr\to\distr$ the linear map $\tau(X)=\Tor(X_0,X)$, we define the second metric invariant $\chi$ as the function
\begin{equation}
\chi = \sqrt{-\det\tau}.
\end{equation}
In terms of \eqref{eq:structurecoeff} the metric invariants $\chi$ and $\kappa$ we have the following expression:
\begin{gather}
\chi^2=-c^1_{01}c^2_{02}+\frac{\left(c^2_{01}+c^1_{02}\right)^2}{4}, \\
\kappa=X_2\left(c^1_{12}\right)-X_1\left(c^2_{12}\right)-\left(c^1_{12}\right)^2-\left(c^2_{12}\right)^2+\frac{c^2_{01}-c^1_{02}}{2}.
\end{gather}
For more details see \cite[Sections~17.2,~17.6]{thebook} (see also \cite{Barilari2013}).

\begin{proof}[Proof of Proposition~\ref{prop:a3f}] 
As mentioned before, we need to prove the result only in the case of $a_3$.
Since both $f$ and $\delta$ locally define the surface, as their zero level set and $\langle\nabla_Hf,\nabla_H\delta\rangle|_U>0$, there exists a smooth function $h$ such that $f=e^h\delta$.  
Exploiting \eqref{eq:gradH=1}, we have that for $i=0,1,2$
\begin{equation}
\label{eq:Xf=Xd}
\left.\left(\dfrac{X_if}{\left\| \nabla_Hf\right\|}\right)\right|_U=\left.\left(\dfrac{X_i\delta+\delta X_ih}{\left\| \nabla_H\delta+\delta\nabla_Hh\right\|}\right)\right|_U=X_i\delta|_U.
\end{equation}
Therefore, the equivalence of the definition of $X_S$ in the current statement with that of \eqref{eq:XS} is proved.

Recalling \eqref{eq:lunga} we have the following expression for the coefficient $a_3$: 
\begin{equation}
a_3=\int_U\left[ X_{S}(X_0\delta)-\left(X_0\delta\right)^{2}-\langle R(JN,N)N,JN\rangle -\left\langle \nabla_{X_{0}}N,JN\right\rangle \right]dA.
 \end{equation}
Let us focus on the first term in the integral.
We observe that
\begin{align}
\nabla_H\left(\dfrac{X_0f}{\left\| \nabla_Hf\right\|} \right)&= \nabla_H\left(\dfrac{X_0\delta+\delta X_0h}{\left\|\nabla_H\delta+\delta\nabla_Hh\right\|}\right)\\
&=\frac{\nabla_H(X_0\delta)+(X_0h)\nabla_H\delta}{\left\|\nabla_H\delta+\delta\nabla_Hh\right\|}+(X_0\delta)\nabla_H\left(\dfrac{1}{\left\|\nabla_H\delta+\delta\nabla_Hh\right\|}\right).
\end{align}
And evaluating at $U\subset S$
\begin{align}
\nabla_H\left(\dfrac{X_0f}{\left\| \nabla_Hf\right\|} \right)
&=\nabla_H(X_0\delta)+(X_0h)N-(X_0\delta)\dfrac{1}{2}\nabla_H(1+2\delta Nh+\delta^2 \left\| \nabla_Hh\right\|^2)\\
&=\nabla_H(X_0\delta)+(X_0h-(X_0\delta)Nh)N.
\end{align}
Since $X_S$ is orthogonal to $N$, we deduce that on $U$
\begin{equation}
X_S(X_0\delta)=\langle-JN,\nabla_H\left(X_0\delta \right) \rangle = X_S\left(\dfrac{X_0f}{\left\| \nabla_Hf\right\|} \right).
\end{equation}
Moreover, since $N, JN$ is a local orthonormal frame for the sub-Riemannian structure, as well as $X_1,X_2$, a direct computation shows that 
\begin{equation}
\label{eq: kappaR}
\langle R(JN,N)N,JN\rangle=\langle R(X_1,X_2)X_2,X_1\rangle=\kappa. 
\end{equation}
Hence, from \eqref{eq:Xf=Xd}, we deduce that on $U$
\begin{equation}
-\left(X_0\delta\right)^{2}-\langle R(JN,N)N,JN\rangle = -\left(\dfrac{X_0f}{\left\| \nabla_Hf\right\|}\right)^2-\kappa.
\end{equation}
Finally, we observe that 
\begin{equation}
-\left\langle \nabla_{X_{0}}N,JN\right\rangle=\left\langle \Tor(N,X_0)+[N,X_0],JN\right\rangle.
\end{equation}
Then, making the computations for the second addendum we obtain that
\begin{align}
\left\langle [N,X_0],JN\right\rangle 
&= \left\langle [(X_1\delta)X_1+(X_2\delta)X_2,X_0],-(X_2\delta)X_1+(X_1\delta)X_2\right\rangle\\
&=(X_2\delta)(X_0X_1\delta)-(X_1\delta)(X_0X_2\delta)\\
& \quad +\left\langle (X_1\delta)[X_1,X_0] +(X_2\delta) [X_2,X_0],-(X_2\delta)X_1+(X_1\delta)X_2\right\rangle\\
&=(X_2\delta)(X_1X_0\delta)-(X_1\delta)(X_2X_0\delta)-c^2_{01}(X_2\delta)^2+c^1_{02}(X_1\delta)^2\\
&\quad+2(c^2_{02}-c^1_{01})(X_1\delta)(X_2\delta).
\end{align}
Recalling \eqref{eq:matricetau}, we deduce $\left\langle [N,X_0],JN\right\rangle =-JN(X_0\delta)+2\left\langle \Tor(X_0,N),JN\right\rangle,$ thus 
\begin{equation}
-\left\langle \nabla_{X_{0}}N,JN\right\rangle=X_S(X_0\delta)-\left\langle \Tor(X_0,N),X_S\right\rangle.
\end{equation}
The torsion operator is tensorial in its arguments, we conclude thanks to \eqref{eq:Xf=Xd}.
\end{proof}

\begin{remark}
If we specify Theorem~\ref{thm:sviluppo} to the three-dimensional Heisenberg group $\mathbb{H}$, we obtain a result that is consistent with \cite{ritore21}. 
Indeed, chosen an orthonormal frame $X_{1}$, $X_{2}$ for the distribution, we have the following relations:
\begin{eqnarray}
\left[X_2,X_1\right]=X_0& \text{and} &\left[X_1,X_0\right]=\left[X_2,X_{0}\right]=0.
\end{eqnarray}
The geometric invariant $\kappa$ vanishes and thus, exploiting Proposition~\ref{prop:a3f}, we compute the expansion \eqref{eq:expansion} with respect to $\Popp$:
\begin{equation}
\Popp(U_\eps)=\eps\int_UdA-\dfrac{\eps^2}{2}\int_U\mathcal{H}dA+\dfrac{\eps^3}{6}\int_U2X_{S}\left(X_0\delta\right)-\left(X_0\delta\right)^{2}dA+o(\eps^3).
\end{equation}

We warn the reader that in \cite{ritore21} the author chooses an orthonormal basis $X,Y,T$ for the Riemannian extension, that in our notations reads $X=X_1$, $Y=X_2$ and  $X_{0}=2T$. 
Moreover, 
 the vector field  $-e_1$ corresponds to $X_S$ in our notations, as well as $dP$ to $dA$, and $\frac{N}{\left\| N_h\right\|}$ to the Riemannian gradient $\nabla\delta=\nabla_H\delta+(X_0\delta)X_0$. 
 In particular, $2\frac{\left\langle N,T\right\rangle}{\left\| N_h\right\|}=X_{0}\delta$.  The third term in the expansion presented in \cite{ritore21}
\begin{equation}
-\frac{2\eps^{3}}{3}\int_{U}e_1\frac{\left\langle N,T\right\rangle}{\left\| N_h\right\|} +\frac{\left\langle N,T\right\rangle^2}{\left\| N_h\right\|^2} dP,
\end{equation}
coincides with the one obtained here.
\end{remark}

\begin{remark} \label{r:commenta4}
The explicit knowledge of the sub-Riemannian distance function $\delta$ from the surface $S$, is in general necessary in order to compute expansion~\eqref{eq:expansion} with order greater or equal than four.
Indeed, this is true already for surfaces $S$ in the Heisenberg group $\mathbb{H}$, thanks to the results in \cite{balogh15}.
Let  $U\subset S$ relatively compact and such that the closure does not contain any characteristic point. We have that $\Popp(U_\eps)=\sum_{k=1}^4\frac{\eps^k}{k!}a_k+o(\eps^4)$, with
\begin{equation}
\label{eq:a4}
a_4=\int_U(X_0\delta)^2\mathcal{H}+2X_0X_0\delta \,dA_{},
\end{equation}
where $\mathcal{H}=-X_1X_1\delta-X_2X_2\delta$ is the mean sub-Riemannian curvature of Definition~\ref{def: H} specified in the case of $\mathbb{H}$.

Let us consider a smooth local defining function $f:\mathbb{H}\to \R$ such that $U=f^{-1}(0)$ and $f=e^h\delta$ on $U_\eps$. The first term in \eqref{eq:a4} can be equivalently written as: 
\begin{equation}
(X_0\delta)^2\mathcal{H}|_U=-\left(\frac{X_0f}{\|\nabla_Hf\|}\right)^2\text{div}_{}\left(\frac{\nabla_Hf}{\|\nabla_Hf\|}\right).
\end{equation}
Moreover, since
\begin{equation}
X_0X_0f=e^h\left(\delta X_0X_0h+\delta (X_0h)^2+2(X_0h)(X_0\delta)+X_0X_0\delta\right),
\end{equation}
and $\|\nabla_Hf\| \big|_U=e^h\|\nabla_H\delta+\delta\nabla_Hh\| \big|_U=e^h$, we obtain that on the surface
\begin{equation}
X_0X_0\delta=\frac{X_0X_0f}{\|\nabla_Hf\|}-2(X_0h)\frac{X_0f}{\|\nabla_Hf\|}.
\end{equation}
The last identity shows that, in order to compute $a_4$, the explicit knowledge of the distance function $\delta$ is needed (i.e., one cannot get rid of $h$).
\end{remark}

We are ready to specialize our result for some particular three-dimensional  sub-Riemannian contact manifolds. 
Firstly, we consider the class of three-dimensional contact manifold with $\chi=0$. We recall that for this type of spaces there exists a local orthonormal frame such that 
\begin{equation}
\label{eq:bracketsk}
\begin{array}{ccc}
\left[X_{1},X_{0}\right]=\kappa X_{2}, & \left[X_2,X_0\right]=-\kappa X_1, & \left[X_{2},X_{1}\right]=X_{0}.
\end{array}
\end{equation}
In addition, when the structure of the contact manifold is left invariant on a Lie group with $\chi\neq0$, we can choose a canonical frame $X_{1}$, $X_{2}$ of the distribution that is unique up to a sign and such that:
\begin{equation}
\label{eq:canonicalbasis}
\begin{array}{ccc}
\left[X_{1},X_{0}\right]=c^2_{01}X_{2}, & \left[X_{2},X_{0}\right]=c^1_{02}X_{1}, & \left[X_{2},X_{1}\right]=X_{0}+c^1_{12}X_1+c^2_{12}X_2,\end{array}
\end{equation}
and with   (cf.\ \cite[Prop. 17.14]{thebook})
\begin{equation}
\chi=\frac{c^2_{01}+c^1_{02}}{2},
\end{equation} 
Taking into account these spaces and exploiting Proposition~\ref{prop:a3f} we have the following expression for the term $a_3$ in Theorem~\ref{thm:sviluppo}.
\begin{corollary}
\label{cor:corollary}
Given a three-dimensional contact manifold with $\chi=0$,
\begin{equation}
\label{eq:corollary}
a_3=\int_U2X_{S}(X_0\delta)-\left(X_0\delta\right)^{2}-\kappa\, dA.
\end{equation}
In the case of a left invariant sub-Rieman\-nian contact structure on a three-dimen\-sional Lie group with $\chi\neq0$, we have that
\begin{equation}
a_3=\int_U2X_{S}(X_0\delta)-(X_0\delta)^{2}-\kappa+\chi\left[\left(X_{1}\delta\right)^{2}-\left(X_{2}\delta\right)^{2}\right]dA
\end{equation}
with $X_1,X_2$ the unique (up to a sign) canonical basis chosen such that \eqref{eq:canonicalbasis} holds.
\end{corollary}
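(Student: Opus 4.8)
The plan is to specialize the general formula for $a_3$ established in Proposition~\ref{prop:a3f}. Choosing there the distance itself as defining function, $f=\delta$ (which is admissible since $\|\nabla_H\delta\|=1$ and $\langle\nabla_H\delta,\nabla_H\delta\rangle=1>0$ on $U$), every quotient $X_if/\|\nabla_Hf\|$ reduces to $X_i\delta$ and the coefficient becomes
\[
a_3=\int_U\left[2X_S(X_0\delta)-(X_0\delta)^2-\kappa-\left\langle\Tor(X_0,X_S),N\right\rangle\right]dA .
\]
Thus the whole task reduces to evaluating the torsion term $\langle\Tor(X_0,X_S),N\rangle=\langle\tau(X_S),N\rangle$ in each of the two geometries, where $\tau(X)=\Tor(X_0,X)$ and, as recorded on $U$, $X_S=-JN$.

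First I would exploit the structural properties of $\tau$. By condition (iii) of Definition~\ref{def:Tanno} the operator $\tau$ is symmetric and anticommutes with $J$, hence traceless; in any positively oriented orthonormal frame its matrix therefore has the form $\left(\begin{smallmatrix} a & b\\ b & -a\end{smallmatrix}\right)$, in agreement with \eqref{eq:matricetau}, and $\chi=\sqrt{-\det\tau}=\sqrt{a^2+b^2}$. This already disposes of the case $\chi=0$: the vanishing of $a^2+b^2$ forces $\tau\equiv0$, so the torsion term drops out and $a_3$ collapses to \eqref{eq:corollary}. Since this uses only $\tau=0$, the conclusion is frame independent and no special choice of $X_1,X_2$ is needed.

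For the left-invariant case with $\chi\neq0$ I would work in the canonical frame satisfying \eqref{eq:canonicalbasis}. Reading off the structure constants gives $c^1_{01}=c^2_{02}=0$ and $(c^1_{02}+c^2_{01})/2=\chi$, so by \eqref{eq:matricetau} one has $\tau(X_1)=\chi X_2$ and $\tau(X_2)=\chi X_1$. Writing $N=(X_1\delta)X_1+(X_2\delta)X_2$ and $X_S=-JN=(X_2\delta)X_1-(X_1\delta)X_2$, I compute $\tau(X_S)=\chi\left[(X_2\delta)X_2-(X_1\delta)X_1\right]$ and then pair with $N$ to obtain
\[
\left\langle\Tor(X_0,X_S),N\right\rangle=\chi\left[(X_2\delta)^2-(X_1\delta)^2\right]=-\chi\left[(X_1\delta)^2-(X_2\delta)^2\right].
\]
Substituting this into the displayed expression for $a_3$ yields the second formula of the statement.

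No genuine analytic difficulty arises: once Proposition~\ref{prop:a3f} is in hand, the argument is a short tensorial computation. The only delicate points are bookkeeping ones, namely extracting the $c^k_{ij}$ from the bracket relations and inserting them correctly into the torsion matrix \eqref{eq:matricetau}, together with the use of $X_S=-JN$ and the symmetry and tracelessness of $\tau$. The cleanest conceptual observation is that $\chi=0$ is equivalent to $\tau=0$, which makes the first case immediate.
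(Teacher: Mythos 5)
Your proposal is correct and takes essentially the same route as the paper, whose (implicit) proof of the corollary is exactly the specialization of Proposition~\ref{prop:a3f} with $f=\delta$ followed by the evaluation of the torsion term $\langle\Tor(X_0,X_S),N\rangle$ via the matrix \eqref{eq:matricetau} and the structure constants in \eqref{eq:bracketsk} and \eqref{eq:canonicalbasis}. Your remark that $\chi=0$ is equivalent to $\tau\equiv 0$, making the first case frame-independent, is a small correct streamlining rather than a different argument.
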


\begin{remark} 
The coefficients $a_1,a_2,a_3$ of expansion~\eqref{eq:expansion} in Theorem~\ref{thm:sviluppo} are integrals of iterated horizontal divergence of $\delta$. 
More precisely,
\begin{equation}
\label{eq:akdiv}
a_k=\int_U\mathrm{div}_\mu^{k-1}(\nabla_H\delta)\,dA_\mu\qquad k=1,2,3.
\end{equation}
The iterated divergences of a vector field $X$, with respect to a smooth measure $\mu$, on $M$ are defined as
\begin{eqnarray}
\text{div}^0_\mu (X)=1,& \text{div}^{k+1}_\mu (X)=\text{div}_\mu (\text{div}^k_\mu (X)X). 
\end{eqnarray}
In particular, identity \eqref{eq:akdiv} holds tautologically for $a_1$ and by Definition~\ref{def: H} for $a_2$. 
For what concerns the coefficient $a_3$, we have that
\begin{equation}
\mathrm{div}^2_\mu (\nabla_H\delta)=\mathrm{div}_\mu (\mathrm{div}_\mu (\nabla_H\delta)\nabla_H\delta)=-\mathrm{div}(\mathcal{H}_\mu N)= -N\left(\mathcal{H}_\mu\right)+\mathcal{H}_\mu^2,
\end{equation}
where $N=\nabla_H\delta$ by Theorem~\ref{thm:smoothness} and thanks to the property of the divergence for which $\mathrm{div}(aX)=X(a)+a\mathrm{div}(X)$ for every smooth function $a$.
 This fact extends the result in \cite{balogh15} for a surface in the Heisenberg group endowed with the Popp measure.
In fact, $\Popp(U_{\varepsilon})$ is analytic in $\eps$
 \begin{equation}
 \Popp(U_{\varepsilon})=\sum_{k=1}^{\infty}\frac{\varepsilon^{k}}{k!}\int_{\partial\Omega}\text{div}^{k-1}\left(\nabla_H\delta\right)d\mathcal{H}^3_{d_{cc}},
 \end{equation}
where $d\mathcal{H}^3_{d_{cc}}$ is the three-dimensional Hausdorff measure of the metric of the Carnot-Carath\'eodory distance $d_{cc}$ on the surface.
Moreover, the authors prove that the iterated divergences are polynomials of certain second order derivatives of the distance function for which it is given a precise recursive formula.


\end{remark}

\section{Surfaces in model spaces}
\label{Applications}

We compute expansion \eqref{eq:expansion} in Theorem~\ref{thm:sviluppo} for a class of surfaces in model sub-Riemannian spaces, with $\mu$ equal to the Popp measure $\Popp$. We exploit Proposition~\ref{prop:a3f}, avoiding the explicit computation of the sub-Riemannian distance from the surface. 

First, we consider the class of rotational surfaces in the Heisenberg group $\mathbb{H}$. 
Afterwards, following the construction presented in \cite[Section~5]{BBCH21}, we take into account two specific surfaces in the model spaces $SU(2)$ and $SL(2)$. We show that for these surfaces the coefficients $a_2$ and $a_3$ in expansion  \eqref{eq:expansion} are always vanishing. 
This generalizes the case of the horizontal plane in $\mathbb{H}$ shown in \cite{balogh15}.

\subsection{Rotational surfaces in $\mathbb{H}$}

Let us consider the Heisenberg group $\mathbb{H}=(\R^3,\omega)$, where the contact form is 
\begin{equation}
\omega=dz-\frac{y}{2}dx+\frac{x}{2}dy, 
\end{equation}
the distribution $\distr=\ker \omega$ is spanned by the vector fields
\begin{eqnarray}
\label{eq:vectorfieldsH}
X_1=\frac{\partial}{\partial x} +\frac{y}{2}\frac{\partial}{\partial z} &\mbox{and}& X_2=\frac{\partial}{\partial y} -\frac{x}{2}\frac{\partial}{\partial z},
\end{eqnarray} 
and the Reeb vector field is $X_0=\frac{\partial}{\partial z}$.
Moreover, let $f:\mathbb{H}\to\R$ be defined by
\begin{equation}
f(x,y,z)=z-g(r)
\end{equation}  
where $r=\sqrt{x^2+y^2}$ and $g:[0,+\infty)\to\R$ is a smooth function such that $g'(0)=0$.
The level set $S=f^{-1}(0)$ is a smooth surface with a unique characteristic point in $(0,0,g(0))$. 
Indeed, if one considers other points of the surface, the vector field $-y\partial_x+x\partial_y$ is tangent to the surface at such points, but it is not contained in the distribution $\distr$, while the distribution coincides with the tangent plane to $S$ at $(0,0,g(0))$ because $\partial_xf=\partial_yf=0$ at such a point.

Let us consider $U\subset S$ an open and relatively compact set such that its closure does not contain the characteristic point $(0,0,g(0))$. 
In order to obtain the coefficient $a_2$, we proceed to compute the mean curvature $\mathcal{H}$ restricted on the surface exploiting \eqref{eq:HU}.
From \eqref{eq:divergenceX} in Lemma~\ref{lem: H}, the divergence with respect to the Popp's measure $\Popp$  of both $X_1$ and $X_2$ in $\mathbb{H}$ is zero, therefore the quantity we need to is the following
\begin{equation}
\label{eq:Hf}
-\mathcal{H}|_S=\frac{X_1X_1f+X_2X_2f}{\|\nabla_Hf\|}+\nabla_Hf\left( \frac{1}{\|\nabla_Hf\|}\right).
\end{equation}
We compute separately the elements composing the previous formula:
\begin{gather}
X_1f=\frac{y}{2}-g'(r)\frac{x}{r},\qquad X_2f=-\frac{x}{2}-g'(r)\frac{y}{r};\\
X_1X_1f+X_2X_2f=-g''(r)-\frac{g'(r)}{r};\\
\frac{1}{\left\|\nabla_Hf\right\|}=\frac{1}{\sqrt{(X_1f)^2+(X_2f)^2}}=\frac{2}{\sqrt{r^2+4g'(r)^2}}.
\end{gather}
Hence we can compute
\begin{gather}
\nabla_Hf=\left(\frac{y}{2}-g'(r)\frac{x}{r}\right)\partial_x-\left(\frac{x}{2}+g'(r)\frac{y}{r}\right)\partial_y+\frac{r^2}{4}\partial_z;\\
\nabla_Hf\left( \frac{1}{\|\nabla_Hf\|}\right)=
-2\frac{r+4g'(r)g''(r)}{(r^2+4g'(r)^2)^\frac{3}{2}}\nabla_Hf(r)=
2g'(r) \frac{r+4g'(r)g''(r)}{(r^2+4g'(r)^2)^\frac{3}{2}}.
\end{gather}
Plugging into \eqref{eq:Hf}, we obtain that
\begin{align}
-\mathcal{H}
&=-2\frac{g''(r)+\frac{g'(r)}{r}}{\sqrt{r^2+4g'(r)^2}} +2g'(r) \frac{r+4g'(r)g''(r)}{(r^2+4g'(r)^2)^\frac{3}{2}}
=-2\frac{4g'(r)^3+r^3g''(r)}{r(r^2+4g'(r)^2)^\frac{3}{2}}.
\end{align}
Finally, we compute the elements needed for $a_3$ exploiting Proposition~\ref{prop:a3f}:
\begin{align}
X_S&=\frac{X_2f}{\|\nabla_Hf\|}X_1-\frac{X_1f}{\|\nabla_Hf\|}X_2\\
&=\frac{1}{\|\nabla_Hf\|}\left[ -\left(\frac{x}{2}+g'(r)\frac{y}{r}\right)\partial_x-\left(\frac{y}{2}-g'(r)\frac{x}{r}\right)\partial_y-\frac{r}{2}g'(r)\partial_z\right]
\end{align}
and
\begin{align}
2X_S\left(\frac{X_0f}{\|\nabla_Hf\|}\right)-\left(\frac{X_0f}{\|\nabla_Hf\|}\right)^2
&=-\frac{2}{\|\nabla_Hf\|^3}\left(r+4g'(r)g''(r)\right)X_S(r)-\frac{1}{\|\nabla_Hf\|^2}\\
&=4g'(r)\frac{rg''(r)-g'(r)}{\|\nabla_Hf\|^4}.
\end{align}
We conclude that 
\begin{align}
\Popp(U_\eps)&=\eps\int_UdA_{}-\eps^2\int_U\frac{4g'(r)^3+r^3g''(r)}{r(r^2+4g'(r)^2)^\frac{3}{2}}dA_{}\\
&\quad+\frac{\eps^3}{6}\int_U 4g'(r)\frac{rg''(r)-g'(r)}{(r^2+4g'(r)^2)^2} dA_{}+o(\eps^3).
\end{align}

\subsection{Examples in model spaces}
Let us consider in the Heisenberg group $\mathbb{H}$ the plane $S=f^{-1}(0)$ with $f(x,y,z)=z$.
The surface $S$ bounds $\Omega=\{z\leq 0\}$ and has an unique characteristic point in the origin. 
Let $U\subset S$ be open and relatively compact such that $(0,0,0)\notin \overline{U}$. Considering the same setting as in the previous subsection, one can compute expansion~\eqref{eq:expansion} of Theorem~\ref{thm:sviluppo} obtaining that
\begin{equation}
\label{eq:volumepiano}
\Popp(U_\eps)=\eps\int_UdA_{}+o(\eps^3).
\end{equation}
This result agrees with the example presented in \cite{balogh15}.
In what follows the spaces of interest are the Lie groups $SU(2)$ and $SL(2)$ equipped with standard sub-Rieman\-nian structures. Together with the Heisenberg group, these spaces play the role of model spaces for the three-dimensional contact sub-Rieman\-nian manifolds.

\subsubsection{The Special Unitary Group $SU(2)$}

The Special Unitary group is the Lie group of the $2\times2$ unitary matrices of determinant $1$ with the 
 matrix multiplication,
\begin{equation}
SU(2)=\left\{
\left(\begin{array}{cc}
z+iw & y+ix\\
-y+ix & z-iw 
\end{array}\right)
: x,y,z,w\in\R \text{ such that } x^2+y^2+z^2+w^2=1\right\}.
\end{equation} 
The Lie algebra $su(2)$ is the algebra of the $2\times 2$ skew-Hermitian matrices with vanishing trace. Let us identify $SU(2)$ with the unitary sphere $S^3\subset\R^4$, and given $k>0$, we define the left-invariant vector fields on the Lie group:
\begin{gather}
X_1=k\left(z\frac{\partial}{\partial x}-w\frac{\partial}{\partial y}-x\frac{\partial}{\partial z}+y\frac{\partial}{\partial w}\right),\\ 
X_2=k\left(w\frac{\partial}{\partial x}+z\frac{\partial}{\partial y}-y\frac{\partial}{\partial z}-x\frac{\partial}{\partial w}\right),\\
X_0=2k^2\left(-y\frac{\partial}{\partial x}+x\frac{\partial}{\partial y}-w\frac{\partial}{\partial z}+z\frac{\partial}{\partial w}\right).
\end{gather}
The relations given by the Lie brackets between these vector fields correspond to that in \eqref{eq:bracketsk} with $\kappa=4k^2$. 
Now, let us consider the smooth function $f(x,y,z,w)=w$ that defines the surface $S=\{w=0\}\simeq S^2$ . We have that
\begin{eqnarray}
X_1f=ky, & X_2f=-kx, &X_0f=2k^2z.
\end{eqnarray}
There are two characteristic points $(0,0,\pm 1,0)$, that are two opposite poles in the sphere, and correspond to $\pm \mathbb{I} $, where $\mathbb{I}$ is the identity $2\times 2$ matrix.
Let us choose $U\subset S$ relatively compact such that the closure does not contain any characteristic point. In coordinates,
\begin{equation}
\left\|\nabla_Hf\right\|=k\sqrt{x^2+y^2}, 
\qquad \nabla_Hf|_U=k^2yz\dfrac{\partial}{\partial x}-k^2xz\frac{\partial}{\partial y}+k^2(x^2+y^2)\dfrac{\partial}{\partial w}.
\end{equation}
From \eqref{eq:divergenceX} in Lemma~\ref{lem: H}, we have that $\text{div}X_1=\text{div}X_2=0$, and so we compute the mean sub-Riemannian curvature as previously with \eqref{eq:Hf}.
\begin{equation}
-\mathcal{H}|_U
=\dfrac{X_1(y)-X_2(x)}{\sqrt{x^2+y^2}}-\dfrac{x\nabla_Hf(x)+y\nabla_Hf(y)}{k(x^2+y^2)^{\frac{3}{2}}}=0.
\end{equation}
For $a_3$ we exploit Proposition~\ref{prop:a3f} (more precisely \eqref{eq:corollary} in Corollary~\ref{cor:corollary}):
\begin{gather}
\dfrac{X_0f}{\left\|\nabla_Hf\right\|}=\dfrac{2kz}{\sqrt{x^2+y^2}}, \qquad X_S=\dfrac{k}{\sqrt{x^2+y^2}}\left(-xz\frac{\partial}{\partial x}-yz\frac{\partial}{\partial y}+(x^2+y^2)\frac{\partial}{\partial z}\right).
\end{gather}
And thus, we have the following formula, which proves \eqref{eq:volumepiano}
\begin{align}
a_3&=\int_U\left[\dfrac{4k}{\sqrt{x^2+y^2}}X_S(z)-\dfrac{4kz}{(x^2+y^2)^\frac32}(xX_S(x)+yX_S(y))-\dfrac{4k^2z^2}{x^2+y^2}-4k^2\right]dA_{}\\
&=\int_U\left[4k^2+\dfrac{4k^2z^2}{x^2+y^2}-\dfrac{4k^2z^2}{x^2+y^2}-4k^2\right]dA_{}=0.
\end{align}

\subsubsection{The Special Linear Group $SL(2,\R)$}

The special linear group $SL(2,\R)$  is the Lie group of $2\times2$ matrices with determinant $1$  with the matrix multiplication,
\begin{equation}
SL(2,\R)=\left\{(x,y,z,w)\in\R^4 : xw-yz=1\right\}.
\end{equation}
The corresponding Lie algebra $sl(2,\R)$ is the vector space of the $2\times2$ real matrices with vanishing trace.
For $k>0$, we define the left invariant vector fields
\begin{gather}
X_1=k\left(y\frac{\partial}{\partial x}+x\frac{\partial}{\partial y}+w\frac{\partial}{\partial z}+z\frac{\partial}{\partial w}\right),\\
X_2=k\left(x\frac{\partial}{\partial x}-y\frac{\partial}{\partial y}+z\frac{\partial}{\partial z}-w\frac{\partial}{\partial w}\right),\\
X_0=2k^2\left(-y\frac{\partial}{\partial x}+x\frac{\partial}{\partial y}-w\frac{\partial}{\partial z}+z\frac{\partial}{\partial w}\right).
\end{gather}
The commutation relations for these vector fields, are of type in \eqref{eq:Hf} with $\kappa=-4k^2$. 
Let us consider $f(x,y,z,w)=y-z$ defining the plane $S=f^{-1}(0)$. We obtain that
\begin{equation}
X_1f=k(x-w), \qquad X_2f=-k(y+z), \qquad X_0f=2k^2(x+w).
\end{equation}
The characteristic points are $(\pm 1,0,0,\pm 1)$ that correspond to $\pm \mathbb{I} $, where $\mathbb{I}$ is the identity $2\times 2$ matrix.
Let us choose $U\subset S$ relatively compact such that the closure does not contain any characteristic point.
 We have that 
\begin{equation}
\left\|\nabla_Hf\right\|=k\sqrt{(x-w)^2+(y+z)^2}
\end{equation}
and, considering that on $S$ it holds the relation $xw=1+y^2$,
\begin{equation}
\nabla_Hf_{\mid S}=k^2y(w+x)\left(\dfrac{\partial}{\partial w}-\dfrac{\partial}{\partial x}\right)+k^2(x^2+y^2-1)\frac{\partial}{\partial y}+k^2(1-y^2-w^2)\dfrac{\partial}{\partial z}.
\end{equation}
From \eqref{eq:divergenceX} in Lemma~\ref{lem: H}, we have that $\text{div}X_1=\text{div}X_2=0$, and so we compute the mean sub-Riemannian curvature as previously with \eqref{eq:Hf}.
\begin{align}
-\mathcal{H}|_U
&=\dfrac{X_1(x-w)-X_2(y+z)}{\sqrt{(x-w)^2+(y+z)^2}}-\dfrac{(x-w)\nabla_Hf(x-w)+(y+z)\nabla_Hf(y+z)}{k((x-w)^2+(y+z)^2)^{\frac{3}{2}}}\\
&=2k\frac{y-z}{\sqrt{(x-w)^2+(y+z)^2}}-k\dfrac{-(x-w)2y(x+w)+(y+z)(x^2-w^2)}{(x^2+2y^2+w^2-2)^{\frac{3}{2}}}=0.
\end{align}
For the coefficient $a_3$, we compute the characteristic vector field $X_S$ on $U$
\begin{equation}
X_S|_U=\frac{-k^2}{\|\nabla_Hf\|}\left[(x^2+y^2-1)\frac{\partial}{\partial x}+y(w+x)\left(\frac{\partial}{\partial y}+\frac{\partial}{\partial z}\right)+(y^2+w^2-1)\frac{\partial}{\partial w}\right].
\end{equation}
And thus, from Proposition~\ref{prop:a3f} (more precisely, from \eqref{eq:corollary} in Corollary~\ref{cor:corollary}),
\begin{align}
a_3&=\int_U\left[2X_S\left(\frac{2k(x+w)}{\sqrt{(x-w)^2+(y+z)^2}}\right)-\left(\frac{2k(x+w)}{\sqrt{(x-w)^2+(y+z)^2}}\right)^2+4k^2\right]dA_{}\\
&=\int_U\left[\dfrac{4kX_S(x+w)}{\sqrt{(x-w)^2+(y+z)^2}}-4k(x+w)\dfrac{(x-w)X_S(x-w)+2yX_S(y+z)}{((x-w)^2+(y+z)^2)^\frac32}\right.\\
&\qquad\quad \left. -\frac{4k^2(x+w)^2}{(x-w)^2+(y+z)^2}+4k^2\right]dA_{}\\
&=\int_U\left[-4k^2\frac{x^2+2y^2+w^2-2}{(x-w)^2+(y+z)^2}-4k^2(x+w)\dfrac{(x-w)(x^2-w^2)+4y^2(x+w)}{((x-w)^2+(y+z)^2)^2}\right.\\
&\qquad\quad \left. +\frac{4k^2(x+w)^2}{(x-w)^2+(y+z)^2}+4k^2\right]dA_{}=0
\end{align}
after some simplifications. We conclude, again, that \eqref{eq:volumepiano} holds.

\appendix
\section{Proof of Proposition~\ref{prop:Hriemannapproximation}} \label{a:mean}

We denote by $\langle \cdot  , \cdot \rangle_\eps$ the scalar product induced by $g^\eps$.
Moreover, let $\nabla^\eps$ be the Levi-Civita connection associated to $g^\eps$. 
In order to compute the mean curvature $H^\eps$ of $S$ with respect to the $g^\eps$, we consider the orthonormal frame on $TS$ 
\begin{equation}
X_S=(X_2\delta)X_1-(X_1\delta)X_2 \qquad Y^\eps=\frac{\eps(X_\theta\delta)(X_1\delta)X_1+\eps(X_\theta\delta)(X_2\delta)X_2-\eps X_\theta}{\sqrt{1+\eps^2(X_\theta\delta)^2}}.
\end{equation} 
Notice that $X_S$ is the characteristic vector field in \eqref{eq:XS} which is horizontal and does not depend on $\eps$. Moreover, if $X_\theta$ is the Reeb vector field $X_0$, then $Y^\eps$ is $Y_S$ in \eqref{eq: YS} normalized with respect to $g^\eps$.
The mean curvature $H^\eps$ is the trace of the second fundamental form computed with respect to the frame $X_S,Y$. More precisely,
\begin{equation}
\label{eq:He}
H^\eps = \langle\nabla^\eps_{X_S}X_S,N^\eps\rangle_\eps+ \langle\nabla^\eps_{Y^\eps}Y^\eps,N^\eps\rangle_\eps
\end{equation}  
where $N^\eps$ is the Riemannian gradient with respect to $g^\eps$ of $S$
\begin{equation}
N^\eps =\frac{(X_1\delta)X_1+(X_2\delta)X_2+\eps^2(X_\theta\delta)X_\theta}{\sqrt{1+\eps^2(X_\theta\delta)^2}}.
\end{equation}
The key tool for the computations is the Koszul formula, i.e.,
for $U,V,Z$ vector fields, we have that 
\begin{equation}
\label{eq:Koszul}
\langle\nabla^\eps_UV,Z\rangle_\eps=\frac{1}{2}\left(\langle[U,V],Z\rangle_\eps-\langle[V,Z],U\rangle+\langle[Z,U],V\rangle_\eps\right).
\end{equation}
Furthermore recall that 
\begin{equation}
\label{eq:ckij}
[X_j,X_i]=c^1_{ij}X_1+c^2_{ij}X_2+c^\theta_{ij}X_\theta \quad \mbox{for } i,j=1,2,\theta.
\end{equation}
Notice that $c^k_{ij}$ do not depend on $\eps$.
Let us compute the first term in \eqref{eq:He}
\begin{align}
\nabla^\eps_{X_S}X_S&=\nabla^\eps_{(X_2\delta)X_1-(X_1\delta)X_2}(X_2\delta)X_1-(X_1\delta)X_2 \\
& = \left((X_2\delta)X_1X_2\delta-(X_1\delta)X_2X_2\delta\right)X_1\\
& \quad +\left(-(X_2\delta)X_1X_1\delta+(X_1\delta)X_2X_1\delta\right)X_2\\
& \quad + (X_2\delta)^2 \nabla^\eps_{X_1}X_1+(X_1\delta)^2\nabla^\eps_{X_2}X_2\\
& \quad -(X_1\delta)(X_2\delta)(\nabla^\eps_{X_1}X_2+\nabla^\eps_{X_2}X_1).
\end{align}
Differentiating \eqref{eq:gradH=1} with respect to $X_1$ and $X_2$ and $X_\theta$ we obtain that
\begin{eqnarray}
\label{eq:derivazionegradienteH=1}
(X_1\delta)X_1X_1\delta+(X_2\delta)X_1X_2\delta=0,\\
(X_1\delta)X_2X_1\delta+(X_2\delta)X_2X_2\delta=0,\\
(X_1\delta)X_\theta X_1\delta+(X_2\delta)X_\theta X_2\delta=0.
\end{eqnarray}
Taking into account \eqref{eq:ckij}, we have that 
\begin{align}
\nabla^\eps_{X_1}X_1&=c^1_{12}X_2-\eps^2c^1_{\theta1}X_\theta;\\
\nabla^\eps_{X_1}X_2&=-c^1_{12}X_1+\frac{1}{2}(-c^\theta_{12}-\eps^2c^1_{\theta2}-\eps^2c^2_{\theta1})X_\theta\\
\nabla^\eps_{X_2}X_1&=c^2_{12}X_2+\frac{1}{2}(c^\theta_{12}-\eps^2c^1_{\theta2}-\eps^2c^2_{\theta1})X_\theta\\
\nabla^\eps_{X_2}X_2&=-c^2_{12}X_2-\eps^2c^2_{\theta2}X_\theta.
\end{align}
Therefore,
\begin{align}
\nabla^{\eps}_{X_S}X_S&=\left(-(X_1\delta)X_1X_1\delta-(X_1\delta)X_2X_2\delta\right)X_1\\
& \quad +\left(-(X_2\delta)X_1X_1\delta-(X_2\delta)X_2X_2\delta\right)X_2\\
& \quad + (X_2\delta)^2 (c^1_{12}X_2-\eps^2c^1_{\theta1}X_\theta)+(X_1\delta)^2(-c^2_{12}X_2-\eps^2c^2_{\theta2}X_\theta)\\
& \quad -(X_1\delta)(X_2\delta)\left(-c^1_{12}X_1+c^2_{12}X_2+-(c^1_{\theta2}+c^2_{\theta1})X_\theta\right)\\
&=-\left(X_1X_1\delta+X_2X_2\delta-c^1_{12}(X_2\delta)+c^2_{12}(X_1\delta)\right)\left((X_1\delta)X_1+(X_2\delta)X_2\right)\\
& \quad -\eps^2((X_2\delta)^2c^1_{\theta1}+(X_1\delta)^2c^2_{\theta2}-(X_1\delta)(X_2\delta)(c^1_{\theta2}+c^2_{\theta1}))X_\theta.
\end{align}
Simplifying  the computations we obtain that
\begin{align}
\langle\nabla^\eps_{X_S}X_S,N^\eps\rangle_\eps&=\frac{-X_1X_1\delta-X_2X_2\delta+c^1_{12}(X_2\delta)-c^2_{12}(X_1\delta)}{\sqrt{1+\eps^2(X_\theta\delta)^2}}\\
& \quad -\eps^2 (X_\theta\delta) \frac{(X_2\delta)^2c^1_{\theta1}+(X_1\delta)^2c^2_{\theta2}-(X_1\delta)(X_2\delta)(c^1_{\theta2}+c^2_{\theta1}) }{\sqrt{1+\eps^2(X_\theta\delta)^2}}.
\end{align}
Finally, recalling \eqref{eq: H}, we conclude that 
\begin{equation}
\label{eq:Heps1}
\langle\nabla^\eps_{X_S}X_S,N^\eps\rangle_\eps=\frac{\mathcal{H}}{\sqrt{1+\eps^2(X_\theta\delta)^2}}+O(\eps^2).
\end{equation}
Next, we compute the second term in \eqref{eq:He}.
Let us set $N=(X_1\delta)X_1+(X_2\delta)X_2$ as in Definition~\ref{def: moving frame}, so that we can rewrite $Y^\eps$ and $N^\eps$ in the following way
\begin{equation}
Y^\eps =\frac{\eps(X_\theta\delta)N-\eps X_\theta}{\sqrt{1+\eps^2(X_\theta\delta)^2}}, 
\qquad
N^\eps =\frac{N+\eps^2(X_\theta\delta)X_\theta}{\sqrt{1+\eps^2(X_\theta\delta)^2}}.
\end{equation}
Since $Y^\eps$ and $N^\eps$ are orthogonal,
\begin{equation}
\langle\nabla^\eps_{Y^\eps}Y^\eps,N^\eps\rangle_\eps
=\frac{\eps^2}{(1+\eps^2(X_\theta\delta)^2)^\frac{3}{2}}\langle\nabla^\eps_{(X_\theta\delta)N- X_\theta}((X_\theta\delta)N- X_\theta),N+\eps^2(X_\theta\delta)X_\theta\rangle_\eps.
\end{equation}
In particular,
\begin{align}
\nabla^\eps_{(X_\theta\delta)N- X_\theta}((X_\theta\delta)N- X_\theta) &= (X_\theta\delta)^2\nabla^\eps_NN-(X_\theta\delta)\nabla^\eps_NX_\theta+\nabla^\eps_{X_\theta}X_\theta \\
& \quad +((X_\theta\delta)NX_\theta\delta-X_\theta X_\theta\delta)N.
\end{align}
Since $\nabla^\eps$ is a metric connection, then $\nabla^\eps_UV$ is orthogonal to $V$ for every pair of vector fields $U,V$. Hence 
\begin{align}
\langle\nabla^\eps_{Y^\eps}Y^\eps,N^\eps\rangle_\eps&=-\eps^2\frac{\eps^2(X_\theta\delta)^3\langle_\eps\nabla^\eps_NN,X_\theta\rangle_\eps-\langle(X_\theta\delta)\nabla^\eps_NX_\theta-\nabla^\eps_{X_\theta}X_\theta,N\rangle_\eps  }{(1+\eps^2(X_\theta\delta)^2)^\frac{3}{2}}  \label{eq:privius3} \\
&\quad - \eps^2\frac{(X_\theta\delta)NX_\theta\delta-X_\theta X_\theta\delta }{(1+\eps^2(X_\theta\delta)^2)^\frac{3}{2}}.
\end{align}
Using again the Koszul formula in \eqref{eq:Koszul}, and \eqref{eq:derivazionegradienteH=1}, we obtain that
\begin{align} 
\langle\nabla^\eps_NN,X_\theta\rangle_\eps
&=-\langle\nabla^\eps_NX_\theta,N\rangle _\eps \label{eq:privius2}\\
&=-\langle[N,X_\theta],N\rangle_\eps   \\
&=-(X_1\delta)^2c^1_{\theta1}-(X_1\delta)(X_2\delta)(c^1_{\theta2}+c^2_{\theta1})-(X_2\delta)^2c^2_{\theta2},
\end{align}
that does not depend on $\eps$. On the other hand,
\begin{equation}\label{eq:privius1}
\langle\nabla^\eps_{X_\theta}X_\theta,N\rangle_\eps=\langle[N,X_\theta],X_\theta\rangle_\eps=\frac{(X_1\delta)c^\theta_{\theta1}+(X_2\delta)c^\theta_{\theta2}}{\eps^2}.
\end{equation}
Substituting \eqref{eq:privius2} and \eqref{eq:privius1} into \eqref{eq:privius3}, we have 
\begin{equation}
\label{eq:Heps2}
\langle\nabla^\eps_{Y^\eps}Y^\eps,N^\eps\rangle_\eps=O(\eps^2)
-\frac{(X_1\delta)c^\theta_{\theta1}+(X_2\delta)c^\theta_{\theta2}}{(1+\eps^2(X_\theta\delta)^2)^\frac{3}{2}}
\end{equation}
Taking the limit for $\eps\to 0$ in \eqref{eq:Heps1} and \eqref{eq:Heps2}
\begin{equation}
\lim_{\eps\to 0}H^\eps=\lim_{\eps\to 0}\frac{\mathcal{H}+(X_1\delta)c^\theta_{\theta1}+(X_2\delta)c^\theta_{\theta2}}{(1+\eps^2(X_\theta\delta)^2)^\frac{3}{2}}+O(\eps^2)
\end{equation}
one recovers \eqref{eq:Hmu2}.

\bibliography{0-SR-Tube-VF}

@book {thebook,
	AUTHOR = {Agrachev, Andrei and Barilari, Davide and Boscain, Ugo},
	TITLE = {A {C}omprehensive {I}ntroduction to {S}ub-{R}iemannian {G}eometry},
	YEAR = {2019},
	PUBLISHER = {Cambridge University Press},
	URL = {http://webusers.imj-prg.fr/~davide.barilari/Notes.php},
}

@article {BR15,
    AUTHOR = {Barilari, Davide and Rizzi, Luca},
     TITLE = {A formula for {P}opp's volume in sub-{R}iemannian geometry},
   JOURNAL = {Anal. Geom. Metr. Spaces},
  FJOURNAL = {Analysis and Geometry in Metric Spaces},
    VOLUME = {1},
      YEAR = {2013},
     PAGES = {42--57},
      ISSN = {2299-3274},
   MRCLASS = {53C17 (28D05 53C35)},
  MRNUMBER = {3108867},
MRREVIEWER = {Robert K. Hladky},
       DOI = {10.2478/agms-2012-0004},
       URL = {https://doi.org/10.2478/agms-2012-0004},
}

@misc{BH22,
  doi = {10.48550/ARXIV.2209.09099},
  
  url = {https://arxiv.org/abs/2209.09099},
  
  author = {Barilari, Davide and Habermann, Karen},
  
  keywords = {},
  
  title = {Intrinsic sub-{L}aplacian for hypersurface in a contact sub-{R}iemannian manifold},
  
  publisher = {arXiv},
  
  year = {2022},
  
  copyright = {arXiv.org perpetual, non-exclusive license}
}

@article {BTV20,
    AUTHOR = {Balogh, Zolt\'{a}n M. and Tyson, Jeremy T. and Vecchi, Eugenio},
    TITLE = {Correction to: {I}ntrinsic curvature of curves and surfaces and a {G}auss-{B}onnet theorem in the {H}eisenberg group},
    JOURNAL = {Math. Z.},
    FJOURNAL = {Mathematische Zeitschrift},
    VOLUME = {296},
    YEAR = {2020},
    NUMBER = {1-2},
    PAGES = {875--876},
    ISSN = {0025-5874},
    MRCLASS = {53C17 (52A39 53A35)},
	MRNUMBER = {4140768},
    DOI = {10.1007/s00209-019-02234-8},
    URL = {https://doi.org/10.1007/s00209-019-02234-8},
}

@article{BBCH21,
	AUTHOR = {Barilari, Davide and Boscain, Ugo and Cannarsa, Daniele and Habermann, Karen},
	TITLE = {Stochastic processes on surfaces in three-dimensional contact sub-Riemannian manifolds}, 
	DOI = {10.1214/20-aihp1124},
	YEAR = 2021,
	PUBLISHER = {Institute of Mathematical Statistics},
	VOLUME = {57},
	NUMBER = {3},
	JOURNAL = {Annales de l{\textquotesingle}Institut Henri Poincar{\'{e}}, Probabilit{\'{e}}s et Statistiques}
}

@article {BTV17,
    AUTHOR = {Balogh, Zolt\'{a}n M. and Tyson, Jeremy T. and Vecchi, Eugenio},
    TITLE = {Intrinsic curvature of curves and surfaces and a {G}auss-{B}onnet theorem in the {H}eisenberg group},
    JOURNAL = {Math. Z.},
    FJOURNAL = {Mathematische Zeitschrift},
    VOLUME = {287},
    YEAR = {2017},
    NUMBER = {1-2},
    PAGES = {1--38},
    ISSN = {0025-5874},
    MRCLASS = {53C17 (52A39 53A35)},
    MRNUMBER = {3694666},
	MRREVIEWER = {Davide Vittone},
    DOI = {10.1007/s00209-016-1815-6},
    URL = {https://doi.org/10.1007/s00209-016-1815-6},
}

@article {tanno,
    AUTHOR = {Tanno, Shukichi},
    TITLE = {Variational problems on contact {R}iemannian manifolds},
    JOURNAL = {Trans. Amer. Math. Soc.},
    FJOURNAL = {Transactions of the American Mathematical Society},
    VOLUME = {314},
    YEAR = {1989},
    NUMBER = {1},
    PAGES = {349--379},
    ISSN = {0002-9947},
    MRCLASS = {53C15 (32F25 58G30)},
    MRNUMBER = {1000553},
	MRREVIEWER = {Gerhard Huisken},
    DOI = {10.2307/2001446},
    URL = {https://doi.org/10.2307/2001446},
}

@article {abrcontact,
    AUTHOR = {Agrachev, Andrei and Barilari, Davide and Rizzi, Luca},
    TITLE = {Sub-{R}iemannian curvature in contact geometry},
    JOURNAL = {J. Geom. Anal.},
    FJOURNAL = {Journal of Geometric Analysis},
    VOLUME = {27},
    YEAR = {2017},
    NUMBER = {1},
    PAGES = {366--408},
    ISSN = {1050-6926},
    MRCLASS = {53C17 (53C21 53D10)},
	MRNUMBER = {3606555},
	MRREVIEWER = {Alessandro Ottazzi},
    DOI = {10.1007/s12220-016-9684-0},
	URL = {https://doi.org/10.1007/s12220-016-9684-0},
}

@book {chavel,
	AUTHOR = {Chavel, Isaac},
	TITLE = {Riemannian geometry},
	SERIES = {Cambridge Studies in Advanced Mathematics},
	VOLUME = {98},
	EDITION = {Second},
	NOTE = {A modern introduction},
	PUBLISHER = {Cambridge University Press, Cambridge},
	YEAR = {2006},
	PAGES = {xvi+471},
	ISBN = {978-0-521-61954-7; 0-521-61954-8},
	MRCLASS = {53-02 (53Cxx)},
	MRNUMBER = {2229062},
	DOI = {10.1017/CBO9780511616822},
	URL = {https://doi.org/10.1017/CBO9780511616822},
}

@book {graybook,
	AUTHOR = {Gray, Alfred},
	TITLE = {Tubes},
	SERIES = {Progress in Mathematics},
	VOLUME = {221},
	EDITION = {Second},
	NOTE = {With a preface by Vicente Miquel},
	PUBLISHER = {Birkh\"{a}user Verlag, Basel},
	YEAR = {2004},
	PAGES = {xiv+280},
	ISBN = {3-7643-6907-8},
	MRCLASS = {53-02 (53-01 53B25 53C40)},
	MRNUMBER = {2024928},
	DOI = {10.1007/978-3-0348-7966-8},
	URL = {https://doi.org/10.1007/978-3-0348-7966-8},
}

@book {AS04,
	AUTHOR = {Agrachev, Andrei A. and Sachkov, Yuri L.},
	TITLE = {Control theory from the geometric viewpoint},
	SERIES = {Encyclopaedia of Mathematical Sciences},
	VOLUME = {87},
	NOTE = {Control Theory and Optimization, II},
 	PUBLISHER = {Springer-Verlag, Berlin},
	YEAR = {2004},
	PAGES = {xiv+412},
	ISBN = {3-540-21019-9},
	MRCLASS = {93-02 (49-02 49K15 93B27 93C15)},
	MRNUMBER = {2062547},
	MRREVIEWER = {Kevin A. Grasse},
	DOI = {10.1007/978-3-662-06404-7},
	URL = {https://doi.org/10.1007/978-3-662-06404-7},
}

@article {FPR20,
	AUTHOR = {Franceschi, Valentina and Prandi, Dario and Rizzi, Luca},
	TITLE = {On the essential self-adjointness of singular sub-{L}aplacians},
	JOURNAL = {Potential Anal.},
	FJOURNAL = {Potential Analysis. An International Journal Devoted to the
							Interactions between Potential Theory, Probability Theory,
              				Geometry and Functional Analysis},
	VOLUME = {53},
	YEAR = {2020},
	NUMBER = {1},
	PAGES = {89--112},
	ISSN = {0926-2601},
	MRCLASS = {53C17 (31C12 35H20 47B25 58J60 81Q10)},
	MRNUMBER = {4117982},
	MRREVIEWER = {Francescopaolo Montefalcone},
	DOI = {10.1007/s11118-018-09760-w},
	URL = {https://doi.org/10.1007/s11118-018-09760-w},
}

@misc{Rossi22,
	AUTHOR = {Rossi, Tommaso},
	TITLE = {The relative heat content for submanifolds in sub-Riemannian geometry},
	PUBLISHER = {arXiv},
	YEAR = {2022},
	DOI = {10.48550/ARXIV.2202.11062},
	URL = {https://arxiv.org/abs/2202.11062},
	keywords = {Differential Geometry (math.DG), FOS: Mathematics, FOS: Mathematics},
	copyright = {Creative Commons Attribution 4.0 International},
}

@article {balogh15,
	AUTHOR = {Balogh, Zolt\'{a}n M. and Ferrari, Fausto and Franchi, Bruno and Vecchi, Eugenio and Wildrick, Kevin},
	TITLE = {Steiner's formula in the {H}eisenberg group},
	JOURNAL = {Nonlinear Anal.},
	FJOURNAL = {Nonlinear Analysis. Theory, Methods \& Applications. An International Multidisciplinary Journal},
	VOLUME = {126},
	YEAR = {2015},
	PAGES = {201--217},
	ISSN = {0362-546X},
	MRCLASS = {43A80 (53C17)},
	MRNUMBER = {3388879},
	MRREVIEWER = {Adrien Boyer},
	DOI = {10.1016/j.na.2015.05.006},
	URL = {https://doi.org/10.1016/j.na.2015.05.006},
}

@article {ritore21,
	AUTHOR = {Ritor\'{e}, Manuel},
	TITLE = {Tubular neighborhoods in the sub-{R}iemannian {H}eisenberg groups},
	JOURNAL = {Adv. Calc. Var.},
	FJOURNAL = {Advances in Calculus of Variations},
	VOLUME = {14},
	YEAR = {2021},
	NUMBER = {1},
	PAGES = {1--36},
	ISSN = {1864-8258},
	MRCLASS = {53C17 (49Q20)},
	MRNUMBER = {4193432},
	MRREVIEWER = {Scott Robert Zimmerman},
	DOI = {10.1515/acv-2017-0011},
	URL = {https://doi.org/10.1515/acv-2017-0011},
}

@article{Weyl39,
	AUTHOR = {Weyl, Hermann},
	TITLE = {On the Volume of Tubes},
	JOURNAL = {American Journal of Mathematics},
	FJOURNAL = {Amer. J. Math.},
	VOLUME = {61},
	YEAR = {1939},
	NUMBER = {2},
	PAGES = {461--472},
	ISSN = {00029327, 10806377},
	PUBLISHER = {Johns Hopkins University Press},
	URL = {http://www.jstor.org/stable/2371513},
}

@article{steiner,
	AUTHOR = {Steiner, J.},
	TITLE = {\"{U}ber parallele Fl\"achen},
	JOURNAL = {Monatsber. Preu. Akad. Wiss},
	FJOURNAL = {Monatsber. Preu. Akad. Wiss},
	YEAR = {1840},
	PAGES = {114--118},
}

@article {balog03,
    AUTHOR = {Balogh, Zolt\'{a}n M.},
     TITLE = {Size of characteristic sets and functions with prescribed gradient},
   JOURNAL = {J. Reine Angew. Math.},
  FJOURNAL = {Journal f\"{u}r die Reine und Angewandte Mathematik. [Crelle's Journal]},
    VOLUME = {564},
      YEAR = {2003},
     PAGES = {63--83},
      ISSN = {0075-4102},
   MRCLASS = {43A80 (42B25 53C17)},
  MRNUMBER = {2021034},
MRREVIEWER = {Adam Sikora},
       DOI = {10.1515/crll.2003.094}ISTEX,
       URL = {https://doi.org/10.1515/crll.2003.094}ISTEX,
}

@article {Barilari2013,
	AUTHOR = {Barilari, Davide},
	TITLE = {Trace heat kernel asymptotics in 3D contact sub-Riemannian geometry},
	JOURNAL = {Journal of Mathematical Sciences},
	FJOURNAL = {J Math Sci},
	PUBLISHER = {Springer Science and Business Media {LLC}},
	VOLUME = {195},
	NUMBER = {3},
	YEAR = {2013},
	PAGES = {391–411},
	ISSN = {1573-8795},
	DOI = {10.1007/s10958-013-1585-1},
	URL = {https://doi.org/10.1007/s10958-013-1585-1},
}

\end{document}